\renewcommand\theequation{\thesection.\arabic{equation}}
\theoremstyle{break}
\newtheorem{thm}{Theorem}[section]
\newtheorem{cor}[thm]{Corollary}
\newtheorem{prop}[thm]{Proposition}
\newtheorem{lemma}[thm]{Lemma}
\newtheorem{rk}[thm]{Remark}
\newtheorem{defn}[thm]{Definition}
\newtheorem{exmp}[thm]{Example}
\newenvironment{proof-idea}{\noindent{\bf Proof Idea}\hspace*{1em}}{\qed\bigskip\\}
\newenvironment{proof-of-lemma}[1]{\noindent{\bf Proof of Lemma #1}\hspace*{1em}}{\qed\bigskip\\}
\newenvironment{proof-attempt}{\noindent{\bf Proof Attempt}\hspace*{1em}}{\qed\bigskip\\}
\newcommand{\sbst}{\subseteq}
\newcommand{\abs}[1]{\lvert#1\rvert}
\newcommand{\set}[2]{\{#1\,|\,#2\}}
\newcommand{\mtrtwo}[4]{\begin{pmatrix} #1 &#2 \\#3 &#4 \end{pmatrix}}
\newcommand{\BC}{{\mathbb {C}}}
\newcommand{\BZ}{{\mathbb {Z}}}
\newcommand{\RH}{{\mathrm {H}}}
\newcommand{\RI}{{\mathrm {I}}}
\newcommand{\RU}{{\mathrm {U}}}
\newcommand{\Sym}{{\mathrm {Sym}}}
\newcommand{\R}{\mathbb{R}}
\newcommand{\C}{\mathbb{C}}
\renewcommand{\Re}{\text{Re}}
\newcommand{\fg}{\mathfrak{g}}
\newcommand{\fh}{\mathfrak{h}}
\newcommand{\fb}{\mathfrak{b}}
\newcommand{\quo}{\backslash}
\renewcommand{\bar}{\overline}
\renewcommand{\tilde}{\widetilde}
\newcommand{\diag}{{\mathrm{diag}}}
\newcommand{\GL}{{\mathrm{GL}}}
\newcommand{\SU}{{\mathrm{SU}}}
\newcommand{\Hom}{{\mathrm{Hom}}}
\newcommand{\Bil}{{\mathrm {Bil}}}
\newcommand{\Ind}{{\mathrm{Ind}}}
\newcommand{\Lie}{{\mathrm{Lie}}}
\newcommand{\Mat}{{\mathrm{Mat}}}
\newcommand{\tr}{{\mathrm{tr}}}
\newcommand{\Rmnum}[1]{\expandafter\@slowromancap\romannumeral #1@}
\begin{document}
\renewcommand{\theequation}{\arabic{equation}}
\numberwithin{equation}{section}

\title[Explicit Cohomological Test Vectors for $\GL_{2n}(\C)$]{Archimedean Non-vanishing, Cohomological Test Vectors, and Standard $L$-functions of $\GL_{2n}$: Complex Case}

\author{Bingchen Lin}
\address{School of Mathematics\\
Sichuan University, China}
\email{87928335@qq.com}

\author{Fangyang Tian}
\address{Department of Mathematics\\
National University of Singapore, Singapore}
\email{mattf@nus.edu.sg}

\subjclass[2010]{Primary 22E45; Secondary 11F67}

\date{\today}

\keywords{Non-Vanishing of Archimedean Local Integral, Linear Model, Shalika Model, Friedberg-Jacquet Integral, Cohomological Test Vector, Standard $L$-functions for General Linear Groups}

\thanks{The research of B. Lin is supported in part by the China Scholarship Council No.201706245006, and that of F. Tian is supported in part by AcRF Tier 1 grant R-146-000-277-114 of National University of Singapore.}

%\dedicatory{to James Cogdell, on the occasion of his 60th birthday}

\begin{abstract}
The purpose of this paper is to study the local zeta integrals of Friedberg-Jacquet at complex place and to establish similar results to the recent work \cite{ChenJiangLinTianExplicitCohomologicalVectorReal} joint with C. Chen and D. Jiang.
%The strategy is similar, but there are still some unforeseen difficulties to conquer in our case.
In this paper, we will
%modify the method in \cite{J-L-T} and
(1) give a necessary and sufficient condition on an irreducible essentially tempered cohomological representation $\pi$ of $\GL_{2n}(\C)$ with a non-zero Shalika model; (2) construct a new twisted linear period $\Lambda_{s,\chi}$ and give a different expression of the linear model for $\pi$; (3) give a necessary and sufficient condition on the character $\chi$ such that there exists a uniform cohomological test vector $v\in V_\pi$ (which we construct explicitly) for $\Lambda_{s,\chi}$. As a consequence, we obtain the non-vanishing of local Friedberg-Jacquet integral at complex place. All of the above are essential preparations for attacking a global period relation problem in the forthcoming paper(\cite{J-S-T}).
\end{abstract}

\maketitle

%%%%%%%%%%%%%%%%%%%%%%%%%%%%%%%%%%%%%

\tableofcontents

%%%%%%%%%%%%%%%%%%%%%%%%%%%

\section{Introduction}\label{Section: Introduction}

%%%%%%%%%%%%%%%%%%%
In 1993, S. Friedberg and H. Jacquet constructed a global integral in \cite{F-J} which relates to the standard $L$-function $L(s,\pi\otimes\chi)$ for an irreducible cuspidal automorphic representation $\pi$ of $\GL_{2n}$, twisted by an idelic character $\chi$, assuming that $\pi$ has a nonzero Shalika model. To be precise, the characters in this paper are not assumed to be unitary; if so, we call them unitary characters in order to be precise. The global integral admits an Euler product. Here we recall the setup for its archimedean local integral.

Let $F$ be either $\R$ or $\C$. Let $\omega$ be a character of $F^\times$, and $\psi$ be a nontrivial unitary character of $F$. We say that an irreducible Casselman-Wallach representation $\pi$ of $\GL_{2n}(F)$ has a non-zero $(\omega, \psi)$-Shalika model, if there exists a non-zero continuous linear functional $\lambda$ on the Fr\'{e}chet space $V_\pi$, which is called a {\sl Shalika functional}, such that
    \begin{equation}\label{Eq: Def of Shalika Functional}
      \langle \lambda, \pi(\mtrtwo{\RI_n}{Y}{}{\RI_n}\mtrtwo{g}{}{}{g})v \rangle =\omega(\det g) \langle \lambda, v\rangle\cdot \psi(\tr(Y)),
    \end{equation}
    for any $v\in V_\pi$, $g\in \GL_n(F)$ and any $n\times n$ matrix $Y\in\Mat_n(F)$. Here and henceforth, $\RI_n$ stands for the $n\times n$ identity matrix. It is clear that if \eqref{Eq: Def of Shalika Functional} holds, then $\omega^n$ is equal to the central character of $\pi$.

Now we assume that $\pi$ is an irreducible generic Casselman-Wallach representation of $\GL_{2n}(F)$ with a nonzero Shalika model. We fix a nonzero Shalika functional $\lambda$ on $V_\pi$ and take a character $\chi$ of $F^\times$. The archimedean local integral of Friedberg-Jacquet is
    \begin{equation}\label{Eq: Def of Local Integral}
       Z(v,s,\chi) = \int_{\GL_n(F)} \langle \lambda, \pi(\mtrtwo{g}{}{}{\RI_n})v \rangle \abs{\det g}_F^{s-\frac{1}{2}}\chi(\det g) dg
    \end{equation}
    for $v\in V_\pi$. It is proved in \cite[Theorem 3.1]{A-G-J} that
    \begin{enumerate}
      \item the local zeta integral $Z(v,s,\chi)$ converges absolutely when $\Re(s)$ is sufficiently large and has a meromorphic continuation in $s$ to the whole complex plane;
      \item it is a holomorphic multiple of the local $L$-function $L(s,\pi\otimes\chi)$ in the sense of meromorphic continuation;
      \item one can choose a smooth vector $v$ (not necessarily $K$-finite) such that the local integral $Z(v,s,\chi)$ is exactly the local $L$-function $L(s,\pi\otimes\chi)$.
    \end{enumerate}
    We note that when both $\chi$ and $\omega$ are trivial, the integral in \eqref{Eq: Def of Local Integral} is exactly the local integral considered in in \cite{A-G1}. Thus whenever $s=s_0$ is not a pole of $L(s,\pi\otimes\chi)$, $Z(v,s,\chi)$ has no pole at $s=s_0$. This implies that the map: $v \mapsto Z(v, s_0,\chi)$ defines a nonzero element in
    $$
    \text{Hom}_{H}(V_\pi, \abs{\det}_F^{-s_0+\frac{1}{2}}\chi^{-1}(\det)\otimes\abs{\det}_F^{s_0-\frac{1}{2}}(\chi\omega)(\det)),
    $$
    which is called the space of {\sl twisted linear functionals} of $\pi$. Here
    \begin{equation*}
       H = \Big\{\begin{pmatrix}g_1& \\ &g_2\end{pmatrix}\Big| g_1,g_2\in \GL_n(F) \Big\} \simeq \GL_n(F) \times \GL_n(F).
    \end{equation*}

    To obtain further arithmetic applications of the Friedberg-Jacquet integral, the representation $\pi$ is often taken to be a cohomological representation. Meanwhile, a lot of research works, such as the work of A. Ash and D. Ginzburg (\cite{A-G1}) on $p$-adic $L$-functions for $\GL_{2n}$ and the work of H. Grobner and A. Raghuram (\cite{G-R}) on arithmetic properties of the critical values of twisted standard $L$-functions of $\GL_{2n}$, are based on the \textbf{assumption} that the archimedean local zeta integral $Z(v,\frac{1}{2},\chi)$ is non-zero when $v$ is a cohomological vector of $\pi$. We recall that when $\pi$ is cohomological, a vector $v\in V_\pi$ is said to be cohomological if it lives in the minimal $K$-type $\tau$ of $\pi$. In a recent work of Binyong Sun (\cite{Sun}), he proves that in the real case, for each $s\in\BC$, the existence of a cohomological test vector $v_s\in V_\pi$, hence local Friedberg-Jacquet integral does not vanish when evaluating at $v_s$. However, the test vector he found is not uniform (in other words, the test vector depends on $s$). This is the key reason that his result is not strong enough to attack the global period relation for the twisted standard $L$-function at all critical places. On the other hand, Sun's work does not involve the non-vanishing of Friedberg-Jacquet integral at complex place. Naturally, we have the following two \textbf{problems}:
    \begin{itemize}
    \item Find a uniform cohomological test vector $v$ for the archimedean local zeta integral of Friedberg-Jacquet.
    \item Show the non-vanishing of the local zeta integral $Z(v,\frac{1}{2},\chi)$ in the complex case.
    \end{itemize}

    In our recent work joint with C. Chen and D. Jiang \cite{ChenJiangLinTianExplicitCohomologicalVectorReal}, we have shown that for each irreducible essentially tempered cohomological representation $\pi$ of $\GL_{2n}(\R)$, $\pi$ automatically has a nonzero Shalika model. Moreover, we explicitly construct a new twisted linear period $\Lambda_{s,\chi}$ for $\pi$ and a uniform cohomological test vector $v\in V_\pi$ for $\Lambda_{s,\chi}$, and hence recover and strengthen Sun's non-vanishing result (\cite[Theorem 5.1]{Sun}). In fact, the vector $v$ we construct is just the uniform cohomological test vector for archimedean local zeta integral of Friedberg-Jacquet. We will see this result in a forthcoming paper \cite{J-S-T}, where the authors of \cite{J-S-T} give a complete proof of the global period relation for the twisted standard $L$-functions at all critical places for irreducible, regular algebraic, cuspidal automorphic representations of $\GL_{2n}$ of (generalized) symplectic type. The goal of this paper is to %imitate the method which was developed in \cite{J-L-T} and
    solve the above two problems in the complex case. In contrast to the real case,
    \begin{enumerate}
      \item \textbf{not all} irreducible essentially tempered cohomological representations of $\GL_{2n}(\C)$ has a non-zero Shalika model and we will classify all such representations with a non-zero Shalika model in Subsection \ref{subsection: classification};
      \item for irreducible essentially tempered cohomological representations $\pi$ of $\GL_{2n}(\C)$, the twisted linear model $\Lambda_{s,\chi}$, hence the local Friedberg-Jacquet integral $Z(v,s,\chi)$ \textbf{can be zero} when evaluating at a cohomological vector $v$. The non-vanishing property of $Z(v,s,\chi)$ will crucially depend on the data $\pi$ and $\chi$. Roughly speaking, when the restriction of the character $\chi$ on the unit circle $S^1$ is 'highly ramified', i.e. $\chi\lvert_{S^1}$ has the form $(\frac{z}{\abs{z}})^l$ with very large integer $l$ (in terms of absolute value), then the local integral vanishes on the minimal $K$-type. The exact statement will be explained in Theorem \ref{thm-main}.
    \end{enumerate}

    The method we use in this paper is similar to the original work of \cite{J-L-T} which uses differentiable operators and classical invariant theory to construct certain polynomials; yet in a revised version of the real case \cite{ChenJiangLinTianExplicitCohomologicalVectorReal}, the strategy is now changed. Hence we would like to keep the strategy of \cite{J-L-T} which might work for other situations. Comparing with the computation in the real case in \cite{J-L-T}, besides the above two new phenomena appearing in this paper, we also face a technical difficulty when solving the non-vanishing problem in the complex case. The difficulty roots in the finite dimensional representation of compact groups. In the complex case, we are able to construct a \textbf{scalar-valued} function (see Corollary \ref{cor: construction of cohomological vector}) that lives the minimal $K$-type of $\pi$ using the method in \cite{J-L-T}. Yet the twisted linear model which we will construct in \eqref{eq: 020}  by the same method as in \cite{J-L-T} requires a \textbf{vector-valued} function. For the real case, this problem was resolved in \cite{J-L-T} by writing the vector-valued function in terms of a basis of the vector space. However, in the complex case, this method is unrealistic, or at least will lead to very complicated computation. This is because the irreducible representations of $\mathrm{O}_2(\R)$ are at most two dimensional, yet the irreducible representation of $\mathrm{U}_2$ can have arbitrary dimension. To overcome this technical difficulty, we will keep a close track on the double induction formula between the two models of $\pi$ in \eqref{eq: 1051}.

    Finally, we would like to thank Dihua Jiang, Binyong Sun and Lei Zhang for helpful suggestions on writing this paper. This joint work was started during Lin's visit at the University of Minnesota. He would also like to take this opportunity to thank the School of Mathematics of UMN for offering excellent hospitality and working conditions.
\subsection{Cohomological representations of $\GL_{2n}(\BC)$}\label{sec-CRGL2n}
Let $G=\GL_{2n}(\C)$, $B$ be the standard Borel subgroup of $G$, and $K = \RU_{2n}$ be the standard maximal compact subgroup of $G$. Set
    \begin{equation}\label{def: subgroup H}
       H = \Big\{\mtrtwo{g_1}{}{}{g_2}\Big\lvert g_1,g_2\in \GL_n(\C) \Big\} \simeq \GL_n(\C) \times \GL_n(\C).
    \end{equation}
    Denoted by $Z$ the center of $G$. Let $d$ be the dimension of $\Lie(H)/\Lie((K\cap H)Z)$, where $\Lie(H)$ is the Lie algebra of $H$. Then
    \begin{equation}\label{Eq: Def of d}
       d =  2n^2-1,
    \end{equation}
    which, as indicated in \cite{A-G1}, is the dimension of the modular symbol generated by the closed subgroup $H$. To fix notation, from now on, we
    will use capital letters $G, H$ etc. for certain Lie groups, $G^0, H^0$ etc. for their identity components, German letters $\mathfrak{g}, \mathfrak{h}$ etc. for their Lie algebras, and $\mathfrak{g}^\C, \mathfrak{h}^\C$, etc. for the complexifications of Lie algebras.

    Let
    \begin{equation}\label{eq: dominant algebraic char}
        \nu: = (\nu_1\geq \nu_2\geq \cdots\geq \nu_{2n}; \nu_{2n+1}\geq \nu_{2n+2}\geq \cdots\geq \nu_{4n})
    \end{equation}
    be two sequence of integers in the decreasing order. Set $\rho_{1, \nu}$ and $\rho_{2,\nu}$ be two highest weight representations of $\GL_{2n}(\BC)$ (regarded as a complex algebraic group) with highest weights $(\nu_1\geq \nu_2\geq \cdots\geq \nu_{2n})$ and $(\nu_{2n+1}\geq \nu_{2n+2}\geq \cdots\geq \nu_{4n})$ respectively. Then we set $(\rho_\nu, F_\nu)$ to be the complex representation of the real algebraic group $G = \GL_{2n}(\BC)$ defined as $\rho_\nu(g) := \rho_{1,\nu}(g)\otimes \rho_{2,\nu}(\bar g).$ We call the $4n$-integers $\nu$ defined in \eqref{eq: dominant algebraic char} the highest weight of $F_\nu$.
    %where each $\nu_j\in \mathbb{Z}$, $\nu_1\geq \nu_2\geq\cdots\geq\nu_{2n}$ and $\nu_{2n+1}\geq \nu_{2n+2}\geq\cdots\geq\nu_{4n}$. It is also viewed as an irreducible representation of the real Lie group $G$ via the complexification map
    %$$G\longrightarrow G\times G,\ \ g\longmapsto (g,\ \bar{g}).$$
    In this situation, we denote $\nu_{2n+j}$ by $\bar{\nu}_j$ for all $j=1,2,\cdots,2n$. The complex conjugate notation $\bar{\nu}_j$ is introduced here simply to indicate that $\bar{\nu}_j$ comes from a composition of a highest weight representation of $\GL_{2n}(\BC)$ with the complex conjugation.

    Let $(\pi, V_\pi)$ be an irreducible essentially tempered Casselman-Wallach representation of $G$ such that the total relative Lie algebra cohomology
    $$\RH^*(\mathfrak{g},K,\pi\otimes F_\nu^\vee)\ne 0,$$
    here and henceforth, $\vee$ stands for the contragredient representation. By abuse of notation, we also use $\pi$ for its underlying $(\mathfrak{g}, K)$-module when no confusion arises. Throughout this paper, we assume
    that $\nu$ satisfies the purity condition as in \cite{Clo}: there exists an integer $m\in \mathbb{Z}$ such that for all $j=1,2,\cdots,2n$,
    \begin{equation}\label{eq: purity condition}
         \bar{\nu}_j+\nu_{2n-j+1} = m.
    \end{equation}This is an essential condition for the existence of such a representation $\pi$. By \cite[Lemma 3.14]{Clo}, the $j^{th}$-cohomology group
    \begin{equation*}
        \RH^j(\mathfrak{g},K, \pi\otimes F_\nu^\vee)\ne 0,
    \end{equation*}
    if and only if
    \begin{equation*}
           2n^2-n \leq j \leq 2n^2+n-1.
    \end{equation*}
    In particular, we have
    \begin{equation*}
        \RH^d(\mathfrak{g},K,\pi\otimes F_\nu^\vee)\ne 0,
    \end{equation*}
    which coincides the assumption in \cite{A-G1} when $F_\nu$ is the trivial representation. Now we recall the Langlands parameter for $\pi$, which is discussed in \cite[Section 2.4.2]{R}.
    \begin{prop}\label{structure-pi}
     Let $(\pi, V_\pi)$ be an irreducible essentially tempered Casselman-Wallach representation of $G$ such that the $d$-th ($d=2n^2-1$) relative Lie algebra cohomology
     $$ \RH^d(\mathfrak{g},K,\pi\otimes F_\nu^\vee)\ne 0.$$
    Then $\pi$ is equivalent to the principal series representation
\begin{equation}\label{eq: cohomological rep parameter}
\Ind_{B}^{G} z^{\nu_1+\frac{2n-1}{2}}\bar{z}^{m-\nu_1-\frac{2n-1}{2}}\otimes z^{\nu_2+\frac{2n-3}{2}}\bar{z}^{m-\nu_2-\frac{2n-3}{2}}\otimes\cdots\otimes z^{\nu_{2n}+\frac{1-2n}{2}}\bar{z}^{m-\nu_{2n}-\frac{1-2n}{2}},
\end{equation}
     where $\nu$ and $m$ are described in \eqref{eq: dominant algebraic char} and \eqref{eq: purity condition} respectively.
\end{prop}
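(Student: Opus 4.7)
\begin{proof-idea}
My plan is to combine the classification of irreducible essentially tempered representations of $\GL_{2n}(\C)$ with the infinitesimal-character constraint imposed by the cohomological hypothesis. Since $\GL_{2n}(\C)$ is a complex reductive group, every irreducible tempered representation is fully and irreducibly induced from a unitary character of the Borel $B$; twisting by $\abs{\det}^s$ shows that any irreducible essentially tempered representation of $G$ has the form
\[
\pi \iso \Ind_B^G (\chi_1 \ten \chi_2 \ten \cdots \ten \chi_{2n}),
\]
where each $\chi_j(z) = z^{a_j} \bar z^{b_j}$ with $a_j - b_j \in \Z$ and with $\Re(a_j + b_j)$ independent of $j$. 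For complex groups this principal series is automatically irreducible, so its isomorphism class does not depend on the ordering of the inducing characters.

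Next, I would invoke Wigner's lemma (Borel--Wallach, Ch.~I, \S4): the hypothesis $\RH^d(\g, K, \pi \ten F_\nu^\vee) \ne 0$ forces $\pi$ and $F_\nu$ to share the same infinitesimal character. Since the complexified Lie algebra decomposes as $\g^\C \iso \mathfrak{gl}_{2n}(\C) \oplus \mathfrak{gl}_{2n}(\C)$ (holomorphic and antiholomorphic summands), an infinitesimal character of $G$ is a pair of $S_{2n}$-orbits of $2n$-tuples. Applying the Harish-Chandra isomorphism to $\rho_\nu(g) = \rho_{1,\nu}(g)\ten \rho_{2,\nu}(\bar g)$ gives, for $F_\nu$, the orbits $\{\nu_j + \frac{2n-2j+1}{2}\}_{j=1}^{2n}$ and $\{\bar\nu_j + \frac{2n-2j+1}{2}\}_{j=1}^{2n}$, while the principal series above has orbits $\{a_j\}$ and $\{b_j\}$ respectively.

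Matching these orbits produces permutations $\sigma, \tau \in S_{2n}$ such that
\[
a_j = \nu_{\sigma(j)} + \tfrac{2n-2\sigma(j)+1}{2}, \qquad b_j = \bar\nu_{\tau(j)} + \tfrac{2n-2\tau(j)+1}{2}.
\]
Substituting the purity identity $\bar\nu_k = m - \nu_{2n-k+1}$ and imposing that $\Re(a_j + b_j)$ be constant in $j$ forces $\tau(j) = 2n - \sigma(j) + 1$, whence $a_j + b_j = m$ on the nose. Because the isomorphism class of $\pi$ is insensitive to permuting the $\chi_j$, one may take $\sigma = \id$, which yields exactly the exponents $z^{\nu_j+\frac{2n-2j+1}{2}}\bar z^{m-\nu_j-\frac{2n-2j+1}{2}}$ of \eqref{eq: cohomological rep parameter}.

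The main technical obstacle is the Weyl-group ambiguity at the infinitesimal-character matching step: one must produce a single pair $(\sigma, \tau)$ compatible simultaneously with the essentially tempered condition and the purity identity, and argue that every other valid matching yields an isomorphic induced representation. The latter follows from the irreducibility of full parabolic induction for complex groups, while the former reduces to a direct combinatorial check using the purity of $\nu$.
\end{proof-idea}
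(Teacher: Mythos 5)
Your proposal is correct and follows the standard route: classify irreducible essentially tempered representations of the complex group $G$ as irreducible full principal series (Zhelobenko), use Wigner's lemma to match the infinitesimal character of $\pi$ with that of $F_\nu$, and then use the purity identity together with the constancy of $\Re(a_j+b_j)$ to pin down the pairing $\tau(j)=2n-\sigma(j)+1$ and hence the exponents. This is essentially the argument in the reference \cite[Section 2.4.2]{R} that the paper cites instead of reproving, so your approach is the expected one; the only thing worth making explicit in a write-up is the small combinatorial step showing that $\alpha_{\sigma(j)}-\alpha_{2n-\tau(j)+1}$ constant forces it to be zero (sum over $j$ vanishes, and strict monotonicity of $\alpha_k=\nu_k+\tfrac{2n-2k+1}{2}$ then gives the pairing).
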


\subsection{Shalika model and linear model}\label{sec-SMLM}
     Unlike the real case discussed in \cite{J-L-T}, not all cohomological representations $\pi$ as given in \eqref{eq: cohomological rep parameter} have a non-zero Shalika model. In Subsection \ref{subsection: classification}, we will classify all irreducible cohomological generic representations of $\GL_{2n}(\C)$ (in Theorem \ref{thm: generic repns with Shalika models}) which have a non-zero Shalika model. The method we use should be considered as an archimedean analogue of a successive work of N. Matringe (see \cite{Mat1},\cite{Mat2}). In our scenario, for any integer $L$, we write $\chi_{L}$ the unitary character of $\C^\times$ sending $z$ to $(\frac{z}{\abs{z}})^{L}$. We set
    \begin{equation}\label{eq: l_i eq}
       l_j = 2\nu_j+(2n+1-2j)-m,
    \end{equation}
    then $(l_1,l_2,\cdots,l_{2n})$ is a sequence of integers in a strictly decreasing order such that
    \begin{equation}\label{eq: 1020}
        l_j+l_{2n+1-j} = 2\nu_j+2\nu_{2n+1-j}-2m
    \end{equation}
    is an even integer. We then rewrite the cohomological representation $\pi$ given in \eqref{eq: cohomological rep parameter} as
    \begin{equation}\label{Eq: pi parabolic induction parameter}
       \pi \simeq \Ind_{B}^{G} \abs{\quad}_\C^{\frac{m}{2}} \chi_{l_1}\otimes\abs{\quad}_\C^{\frac{m}{2}} \chi_{l_2}\otimes\cdots\otimes\abs{\quad}_\C^{\frac{m}{2}} \chi_{l_{2n}},
    \end{equation}
    where $|z|_{\mathbb{C}}=|z|^2=z\bar{z}$ for $z\in\mathbb{C}$. By applying Theorem \ref{thm: generic repns with Shalika models}, we immediately conclude that $\pi$ has a nonzero Shalika model if and only if there exists an integer $L$ with the property that
    \begin{equation}\label{eq: 1001}l_{j}+l_{2n+1-j}=2L,\end{equation}
    (or equivalently $\nu_j+\nu_{2n+1-j} = m+L$) for all $j=1,2,\cdots,2n$. If \eqref{eq: 1001} holds, then we can rewrite $\pi$ as the normalized parabolically induced representation
    \begin{equation}\label{Eq: pi parabolic induction parameter GL2}
       \begin{aligned}
       \pi &\simeq \Ind_{P}^{G}\, \sigma_1\otimes\sigma_2\otimes\cdots\otimes\sigma_n,
       \end{aligned}
    \end{equation}
    where $P$ is the standard parabolic subgroup of $G$ associated with the partition $[2^n]$, and each $\sigma_j$ is the principal series of $\GL_2(\C)$:
    \begin{equation}\label{Eq: pi parabolic induction parameter GL2 principal parameter}
        \sigma_j := \Ind^{\GL_2(\C)}_{B_{\GL_2}}\, \abs{\quad}^{\frac{m}{2}}_\C \chi_{l_j}\otimes\abs{\quad}^{\frac{m}{2}}_\C \chi_{l_{2n+1-j}}.
    \end{equation}
    All the principal series $\sigma_j$ share the same central character, denoted by $\omega$:
    \begin{equation}\label{eq: central char GL2}
         \omega(a\RI_2) = \abs{a}_\C^{m}\chi_{2L}(a),
    \end{equation}
    Here in the above, by abuse of notations, we also regard $\omega$ as a character of $\C^\times$ via the isomorphism $Z_{\GL_2(\C)}\simeq \C^\times$. The central character $\omega_\pi$ of $\pi$ takes the form
    \begin{equation}\label{eq: central char}
       \omega_\pi(a \RI_{2n}) = \abs{a}_\C^{mn}\chi_{2nL}(a).
    \end{equation}
    The maximal compact subgroup of $\C^\times$ is $\mathrm{U}_1$, which is isomorphic to the unit circle $S^1$. The restriction of the character $\omega$ on $S^1$ is just the character $\chi_{2L}$.

    Given a character $\chi$ of $\C^\times$, there exists an integer $l$ such that $\chi = \chi_l$ when restricted on $S^1$. With the model of $\pi$ as in \eqref{Eq: pi parabolic induction parameter GL2}, we can construct in Subsection \ref{Subsection: Another Linear Model} a nonzero twisted linear functional $\Lambda_{s,\chi}$ for $\pi$ without using the Shalika model.

\subsection{Cohomological vectors and non-vanishing property}\label{sec-MR} According to the work of F. Chen and B. Sun (see \cite[Theorem B]{Ch-Sun}), given a character $\chi$ of $\C^\times$, we can conclude that for all but countably many $s\in\BC$,
    \begin{equation}\label{Eq: uniqueness of twisted linear model}
        \text{dim Hom}_{H}(\pi, \abs{\det}_\C^{-s+\frac{1}{2}}\chi^{-1}(\det)\otimes\abs{\det}_\C^{s-\frac{1}{2}}(\chi\omega)(\det))\leq 1.
    \end{equation}
Thus, in order to prove that the archimedean local integral $Z(v,s,\chi)$
(as in \eqref{Eq: Def of Local Integral}, $F=\C$) does not vanish on the minimal $K$-type $\tau$ of $\pi$, it suffices to show that the same non-vanishing property for the twisted linear functional $\Lambda_{s,\chi}$ . The real case of this non-vanishing problem was solved in the paper \cite{Sun}. In the original work of \cite{J-L-T} joint with D. Jiang (and a published version \cite{ChenJiangLinTianExplicitCohomologicalVectorReal} with a simplified proof), we proved this non-vanishing property independently by giving an explicit construction of a cohomological vector $v_0$ of $\Lambda_{s,\chi}$. Such a cohomological vector $v_0$ is in fact independent on $s\in\BC$ and is further proved to be a uniform cohomological test vector of $Z(\,\cdot\,,s,\chi)$ in the paper \cite{J-S-T}. In this paper, we will imitate the method that we used in \cite{J-L-T} to construct a cohomological vector explicitly in the complex case. More precisely, for the sequence of integers $(l_1,l_2,\cdots,l_{2n})$ in the decreasing order that satisfies \eqref{eq: 1001}, we set $N_j = l_j-L$ (the integer $L$ is defined in \eqref{eq: 1001}). Then the cohomological representation $\pi$ in \eqref{Eq: pi parabolic induction parameter} is isomorphic to
\begin{equation}\label{eq: 1050}\begin{aligned}\pi&\simeq\Ind_{B}^{G}\abs{\quad}_{\C}^\frac{m}{2}\chi_{N_1+L}\otimes\abs{\quad}_{\C}^\frac{m}{2}\chi_{-N_1+L}\otimes\abs{\quad}_{\C}^\frac{m}{2}\chi_{N_2+L}\otimes\abs{\quad}_{\C}^\frac{m}{2}\chi_{-N_2+L}\otimes\cdots\otimes\\
                         &\qquad\qquad\abs{\quad}_{\C}^\frac{m}{2}\chi_{N_{n}+L}\otimes\abs{\quad}_{\C}^\frac{m}{2}\chi_{-N_{n}+L}.\end{aligned}\end{equation}

Using the model \eqref{eq: 1050}, we will construct an explicit cohomological vector $\varphi\in V_\pi$ in Corollary \ref{cor: construction of cohomological vector}, by restricting a polynomial function $F_{\vec{N},\chi_{-l}\otimes\chi_{l+2L}}$ constructed in Theorem \ref{thm: construction of bi-equivariant polynomial} on the maximal compact subgroup $K$. Via the isomorphism $\eta$ between the two models of $\pi$ (i.e.  \eqref{Eq: pi parabolic induction parameter GL2} and \eqref{eq: 1050})
\begin{equation}\label{eq: 1051}\begin{aligned}\eta: &\Ind_{P}^{G}\sigma_1\otimes\sigma_2\cdots\otimes\sigma_n\\
    \simeq &\Ind_{B}^{G}\abs{\quad}_{\C}^\frac{m}{2}\chi_{N_1+L}\otimes\abs{\quad}_{\C}^\frac{m}{2}\chi_{-N_1+L}\otimes\abs{\quad}_{\C}^\frac{m}{2}\chi_{N_2+L}\otimes\abs{\quad}_{\C}^\frac{m}{2}\chi_{-N_2+L}\otimes\cdots\otimes\\
                         &\qquad\qquad\abs{\quad}_{\C}^\frac{m}{2}\chi_{N_{n}+L}\otimes\abs{\quad}_{\C}^\frac{m}{2}\chi_{-N_{n}+L},\end{aligned}\end{equation}
we are able to evaluate $\Lambda_{s,\chi}(\eta^{-1}(\iota(\varphi)))$. The following Main Theorem of this paper holds:
\begin{thm}[Main Theorem]\label{thm-main}
Let $(\pi, V_\pi)$ be an irreducible essentially tempered Casselman-Wallach representation of $\GL_{2n}(\C)$ such that the total relative Lie algebra cohomology
    $$\RH^*(\mathfrak{g},K,\pi\otimes F_\nu^\vee)\ne 0.$$
Then there exists integers $m,L$ and a sequence of positive integers $(N_1,N_2,\cdots,N_n)$ in the strictly decreasing order such that \eqref{eq: 1050} holds. Given a character $\chi = \abs{\quad}_\C^{u_0}\chi_l$ of $\C^\times$.
\begin{enumerate}
  \item If the integer $l$ satisfies $\abs{l+L}>N_n$, then the local Friedberg-Jacquet integral $Z(v,s,\chi)$ defined in \eqref{Eq: Def of Local Integral} ($F=\C$) vanishes identically on the minimal $K$-type $\tau$ of $\pi$.
  \item If $l$ satisfies $\abs{l+L}\leq N_n$, there exists a cohomological vector $v = \eta^{-1}(\varphi)\in V_\tau$ and a holomorphic function $G(s,\chi)$ such that
$$
Z(v,s,\chi)=e^{G(s,\chi)}\cdot L(s,\pi\otimes\chi),
$$
where $\varphi$ is the cohomological vector explicitly constructed in Corollary \ref{cor: construction of cohomological vector} and $\eta$ is the isomorphism \eqref{eq: 1051}. In particular, $Z(v,s,\chi)\ne 0.$
\end{enumerate}
\end{thm}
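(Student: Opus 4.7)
The plan is to deduce the theorem by reducing to the single twisted linear functional $\Lambda_{s,\chi}$ and then analyzing its restriction to the minimal $K$-type. By the uniqueness quoted in \eqref{Eq: uniqueness of twisted linear model}, for all but countably many $s$ the map $v\mapsto Z(v,s,\chi)$ is a scalar multiple of $v\mapsto \Lambda_{s,\chi}(v)$ on $V_\pi$; hence for both the vanishing and non-vanishing statements it suffices to work with $\Lambda_{s,\chi}$, and the statement for $Z(v,s,\chi)$ follows by meromorphic continuation together with the third item of the Ash--Ginzburg--Jiang result recalled in the introduction.

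For part (1), my approach is a $K$-type branching argument. The minimal $K$-type $\tau$ of $\pi$ is determined by the parameters $N_1,\ldots,N_n,L$ via the standard recipe applied to \eqref{eq: 1050}; its highest weight records $N_j\pm L$ in a symmetric way. On the other hand, $\Lambda_{s,\chi}$ is $H$-equivariant against the character $\abs{\det}_\C^{-s+1/2}\chi^{-1}(\det)\otimes\abs{\det}_\C^{s-1/2}(\chi\omega)(\det)$, whose restriction to $K\cap H=\RU_n\times\RU_n$ is the pair of determinantal characters $\chi_{-l}(\det)\otimes\chi_{l+2L}(\det)$. I would show, by a direct computation of the branching of $\tau$ from $\RU_{2n}$ down to $\RU_n\times\RU_n$, that the $(\chi_{-l}\otimes\chi_{l+2L})$-isotypic component of $\tau|_{K\cap H}$ vanishes precisely when $\abs{l+L}>N_n$. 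Since $\Lambda_{s,\chi}$ factors through this isotypic component, it must annihilate $\tau$ in that range, and the vanishing of $Z(v,s,\chi)$ on cohomological vectors follows.

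For part (2), I propose a direct evaluation. First I pull $\varphi$ through the intertwining isomorphism $\eta$ of \eqref{eq: 1051} to a vector in the ``$\GL_2$-stages'' model $\Ind_P^G \sigma_1\otimes\cdots\otimes\sigma_n$; this is where the double induction formula plays its central role, because the polynomial $F_{\vec N,\chi_{-l}\otimes\chi_{l+2L}}$ from Theorem \ref{thm: construction of bi-equivariant polynomial} is adapted to the Borel model \eqref{eq: 1050}, while $\Lambda_{s,\chi}$ is most naturally expressed in the parabolic model \eqref{Eq: pi parabolic induction parameter GL2}. I would then unfold $\Lambda_{s,\chi}(\eta^{-1}(\iota(\varphi)))$ as a $\GL_n(\C)$-integral and, using the $(K\cap H)$-equivariance of $F_{\vec N,\chi_{-l}\otimes\chi_{l+2L}}$ together with the Iwasawa decomposition, reduce matters to an integral over the diagonal torus that factors as a product of $n$ one-dimensional Tate-type Mellin transforms on $\C^\times$. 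Each factor produces a $\GL_1(\C)$ local gamma factor; multiplying them out yields $L(s,\pi\otimes\chi)$ times an entire nowhere-vanishing factor, which we write as $e^{G(s,\chi)}$. The hypothesis $\abs{l+L}\leq N_n$ is exactly what forces each one-dimensional factor to be non-identically zero, so none of them can kill the product.

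The main obstacle will be the bookkeeping through the isomorphism $\eta$: as emphasized in the introduction, the polynomial $F_{\vec N,\chi_{-l}\otimes\chi_{l+2L}}$ is \emph{scalar-valued}, whereas the natural linear-model integral at the $\GL_2$-stage requires a \emph{vector-valued} input taking values in an arbitrary-dimensional $\RU_2$-representation, so the real-case trick of expanding against a basis is not available. My strategy to circumvent this is to avoid any explicit basis expansion and instead propagate the $(K\cap H)$-equivariance through the double induction, so that every intermediate integrand is controlled by its transformation law rather than by its components, and ultimately only the scalar value of $F_{\vec N,\chi_{-l}\otimes\chi_{l+2L}}$ at the identity enters. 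This is precisely the technical device that the introduction flags as the new ingredient beyond \cite{J-L-T} and \cite{ChenJiangLinTianExplicitCohomologicalVectorReal}.
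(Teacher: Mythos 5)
Your overall skeleton — reduce to the abstract twisted linear period, exploit the multiplicity-one result \eqref{Eq: uniqueness of twisted linear model}, and analyze its restriction to the minimal $K$-type — matches the paper's strategy, and your part (2) is in the right spirit: track the scalar polynomial $F_{\vec N,\chi_{-l}\otimes\chi_{l+2L}}$ through the isomorphism $\eta$ in \eqref{eq: 1051} rather than expanding against a basis. The paper does exactly this in Theorem \ref{thm: cohomological test vector}: it computes $\eta^{-1}(\varphi)(w) = F_{\vec N,\chi_{-l}\otimes\chi_{l+2L}}(xw,\bar x\bar w)$, observes that it \emph{factors} into $n$ copies of the $\GL_2(\C)$ polynomial from Corollary \ref{cor: gl2 cohomological vector}, and then evaluates each factor by the (normalized) $\GL_2$ period $\tilde\lambda_{s,j}$ using Popa's result, getting $\prod_j L(s,\sigma_j\otimes\chi) = L(s,\pi\otimes\chi)$ directly. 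Note the paper stays at the $\GL_2$ level (Whittaker--Mellin transforms via Proposition \ref{prop: popa}); it does not unfold further to a diagonal-torus integral of $n$ "one-dimensional Tate-type Mellin transforms" as you suggest. Your phrasing of this step is imprecise, though a further unfolding could be made to work. Also, the passage from $\Lambda_{s,\chi}$ to $Z(v,s,\chi)$ needs more than just quoting item (3) of the Aizenbud--Gourevitch--Jacquet list: Corollary \ref{cor: relation between two linear models} proves that the proportionality factor is nowhere-vanishing by combining A-G-J with Proposition \ref{Cor: analytic property of Lambda} (the paper's own analogue for $\Lambda_{s,\chi}$), and both directions are needed to conclude $C(s,\chi) = e^{G(s,\chi)}$.

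For part (1) you take a genuinely different route from the paper. You propose to compute the restriction $\tau|_{\RU_n\times\RU_n}$ and show that the $(\chi_{-l}\det)\otimes(\chi_{l+2L}\det)$-isotypic component is zero precisely when $|l+L|>N_n$. This is a nontrivial branching statement about the irreducible $\RU_{2n}$-module of highest weight $\Lambda=(N_1+L,\dots,N_n+L,-N_n+L,\dots,-N_1+L)$, and you give no indication of how to carry it out; for $n>1$ it does not follow from weight-counting alone, and the paper nowhere establishes it. The paper's argument is more elementary and avoids any branching: by \cite[Proposition 8.1]{V2} the minimal $K$-type $\tau$ of $\pi$ sits inside $\Ind_{M\cap K}^K\,\tau_1\otimes\cdots\otimes\tau_n$, so for $\varphi\in V_\tau$ the values $\varphi(w\mtrtwo{k_1}{}{}{k_2})$ land in $V_{\tau_1}\otimes\cdots\otimes V_{\tau_n}$; since $N_n\le N_j$ for all $j$, the hypothesis $|l+L|>N_n$ already forces the $n$-th $\GL_2$ period $\tilde\lambda_{s,n}$ to kill the minimal $K$-type $\tau_n$ of $\sigma_n$ by Proposition \ref{prop: popa}, and thus $\tilde\Lambda_{s,\chi}$ annihilates all of $\tau$. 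If you want to keep the branching route, you would have to actually prove the branching claim, which is arguably more work than the paper's direct argument; conversely, the branching statement, once established, is a stronger structural fact than what the proof needs, so it is not without independent interest.
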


%%%%%%%%%%%%%%%%%%%%%%%%%%%%%%%%%%%%%%%%%%%%%

\section{Cohomological Generic Representations with Shalika Models and Linear Models}\label{Section: Shalika Models and Linear Models}

%%%%%%%%%%%%%%%%%%%%%%%%%%%%%%%%%%%%%%%%%
\subsection{Cohomological Generic Representations of $\GL_{2n}(\C)$ with Shalika Models}\label{subsection: classification}
As we mentioned in the Introduction, not all cohomological representations $\pi$ given in \eqref{eq: cohomological rep parameter} (or equivalently in \eqref{Eq: pi parabolic induction parameter}) have a non-zero Shalika model. The first goal of this Section is to provide a necessary and sufficient condition for $\pi$ under which $\pi$ has a non-zero Shalika model. As in the Introduction, we write $\chi_{l}$ for the unitary character of $\C^\times$ sending $z$ to $(\frac{z}{\abs{z}})^{l}$.

\begin{thm}\label{thm: generic repns with Shalika models}
     Let $\pi$ be the cohomological representation given in \eqref{Eq: pi parabolic induction parameter} with a central character $\omega_\pi$. Given a nontrivial unitary character $\psi$ of $\C$ and a character $\omega$ of $\C^\times$ such that $\omega^n=\omega_\pi$, we have the following equivalent statements:
    \begin{enumerate}
      \item $\pi$ has a non-zero $(\omega, \psi)$-Shalika model defined at the beginning of the Introduction;
      \item There exists a discrete, countable subset $S\sbst\C$ such that for every complex number $s\notin S$, $\pi$ has a nonzero twisted linear model, i.e.
          \begin{equation}\label{eq: 1002}
    \mathrm{Hom}_{H}(V_\pi, \abs{\det}_\C^{-s+\frac{1}{2}}\chi^{-1}(\det)\otimes\abs{\det}_\C^{s-\frac{1}{2}}(\chi\omega)(\det))\ne 0.
    \end{equation}
      \item There exists an integer $L$ such that \eqref{eq: 1001} holds.
    \end{enumerate}
\end{thm}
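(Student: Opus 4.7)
The plan is to prove the three implications (3) $\Rightarrow$ (1) $\Rightarrow$ (2) $\Rightarrow$ (3), which closes the equivalence. Each implication is of a different flavor: the first is a construction, the second is an application of the Friedberg--Jacquet unfolding, and the third is a structural result analogous to Matringe's $p$-adic theorems on distinguished representations.

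\textbf{Step 1: $(3) \Rightarrow (1)$.} Assume $l_j + l_{2n+1-j} = 2L$ for all $j$, so that $\pi$ is isomorphic to $\Ind_P^G \sigma_1 \otimes \cdots \otimes \sigma_n$ as in \eqref{Eq: pi parabolic induction parameter GL2}. I would first exhibit, for each principal series $\sigma_j$ of $\GL_2(\C)$, an explicit Shalika functional $\lambda_j$ with central character of the prescribed form \eqref{eq: central char GL2}. For a principal series of $\GL_2(\C)$ this is a low-dimensional calculation: the Shalika subgroup of $\GL_2$ is the Borel up to twist, and a Shalika functional can be written as an absolutely convergent integral over a one-parameter subgroup, whose convergence and nonvanishing are straightforward to verify by passing to the Iwasawa decomposition. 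Once each $\sigma_j$ carries a Shalika functional, I would assemble a Shalika functional on $\pi$ by inducing in stages, using the standard parabolic $P$ of type $[2^n]$ and the fact that the unipotent radical of the Shalika subgroup of $\GL_{2n}$ is compatible with the block structure; this is the archimedean analogue of Matringe's inductive construction in \cite{Mat1},\cite{Mat2}. The integrals that arise in this process converge absolutely in the archimedean setting by the usual gauge estimates for Casselman--Wallach representations.

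\textbf{Step 2: $(1) \Rightarrow (2)$.} Given a Shalika functional $\lambda$ on $\pi$, define the Friedberg--Jacquet integral $Z(v,s,\chi)$ as in \eqref{Eq: Def of Local Integral}. A direct change of variables, using the Shalika equivariance on the $g_2$-slot and the substitution $g \mapsto g h_1^{-1}$ on the $g_1$-slot, shows that for every $(h_1,h_2) \in H$,
\begin{equation*}
Z(\pi(\mathrm{diag}(h_1,h_2)) v, s, \chi) = \abs{\det h_1}_\C^{-(s-\frac{1}{2})}\chi^{-1}(\det h_1) \cdot \abs{\det h_2}_\C^{s-\frac{1}{2}}(\chi\omega)(\det h_2) \cdot Z(v,s,\chi).
\end{equation*}
By \cite[Theorem 3.1]{A-G-J}, $Z(v,s,\chi)$ is a holomorphic multiple of $L(s,\pi\otimes\chi)$ and is not identically zero in $v$ as a meromorphic function of $s$. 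Hence outside the discrete countable set $S$ of poles and common zeros in $v$, the map $v \mapsto Z(v,s,\chi)$ is a nonzero element of the Hom space in \eqref{eq: 1002}.

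\textbf{Step 3: $(2) \Rightarrow (3)$, the main obstacle.} This is the hardest direction and the one where the archimedean setting diverges from Matringe's $p$-adic arguments. My strategy is to use Bruhat-type $H$-orbit analysis on $G/B$ applied to the principal series model \eqref{Eq: pi parabolic induction parameter} of $\pi$. Each $(H,B)$-double coset contributes, via the analogue of the geometric lemma (or equivalently, via a filtration of $\pi|_H$ by normal derivatives supported on closures of orbits), a necessary condition on the inducing character $\abs{\quad}_\C^{m/2}\chi_{l_1} \otimes \cdots \otimes \abs{\quad}_\C^{m/2}\chi_{l_{2n}}$ for a nonzero $H$-equivariant functional to exist. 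The dominant open-orbit contribution forces the inducing character on $B \cap w^{-1} H w$ (for $w$ the long-element-type representative of the open orbit) to coincide with the pullback of $\abs{\det}_\C^{-s+1/2}\chi^{-1} \otimes \abs{\det}_\C^{s-1/2}(\chi\omega)$. I expect this matching, together with the rigidity of the purity condition \eqref{eq: purity condition}, to yield the pairing $l_j + l_{2n+1-j} = 2L$ for a single integer $L$ depending on $\chi$ and $\omega$. The main difficulty is technical: in the complex case the Weyl group combinatorics are richer than in the real case, and one must verify that the contributions from non-open orbits cannot by themselves support the twisted linear functional for all but countably many $s$ -- this uses that a generic choice of $s$ avoids reducibility along the lower-dimensional strata. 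Once condition (3) on $l_j + l_{2n+1-j}$ is extracted, the proof of the theorem is complete.
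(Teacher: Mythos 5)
Your outline follows the same overall architecture as the paper: $(3)\Rightarrow(1)$ by building $\GL_2$ Shalika functionals and inducing in stages; $(1)\Rightarrow(2)$ by invoking \cite[Theorem 3.1]{A-G-J}; and $(2)\Rightarrow(3)$ by Bruhat-type orbit analysis. The first two steps are essentially the paper's, with one minor inefficiency: for $\GL_2(\C)$ the Shalika functional requires no auxiliary integral, because the Whittaker functional itself already satisfies the $(\omega,\psi)$-Shalika equivariance for $n=1$ (the Shalika subgroup of $\GL_2$ is $Z\cdot N$, not "the Borel up to twist" as you state). The paper notes this shortcut and then appeals to the inductive-stages result \cite[Theorem 2.1]{ChenJiangLinTianExplicitCohomologicalVectorReal}; your plan would work but is more complicated than necessary.

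The substantive concern is in your Step 3. You propose that "the dominant open-orbit contribution forces the inducing character $\ldots$ to coincide," and that one must "verify that the contributions from non-open orbits cannot by themselves support the twisted linear functional for all but countably many $s$." This is not how the dichotomy actually cuts. In the paper's Bruhat estimate over $B\backslash G/H'$, the orbits are parameterized by integer data $\mathcal{N}=\{n_{i,j},n_{t,t}^\pm\}$, and what distinguishes orbits that can contribute for uncountably many $s$ is \emph{not} whether they are open, but whether all $n_{t,t}^\pm=0$. When some $n_{t,t}^\pm\ne 0$, the restriction of $\eta(s,\chi,\omega)^{x_\mathcal{N}}$ to $M_\mathcal{N}$ genuinely depends on $s$, so it can match a fixed transversal-derivative character only for countably many $s$; when $n_{t,t}^\pm=0$, that dependence drops out. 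Several orbits of the latter kind — not merely the open one — survive this filter. The next ingredients are that for every such orbit the modular factor $\delta_B^{-1/2}\delta_{\mathcal{N}}$ is trivial on $M_\mathcal{N}$ (reduced, via a block decomposition into $\GL_4$ pieces, to a finite check), and that the transversal-derivative characters are trivial on the center, which forces their exponents $u_{i,j}$ to sum to zero and ultimately to all vanish. Only then does the matching equation reduce to a pure pairing condition $l_{i_k,j_k}+l_{j_k,i_k}=L'$. Finally, you do not mention the step that converts this into $l_j+l_{2n+1-j}=2L$: one needs the strict monotonicity of $(l_1,\dots,l_{2n})$ to see that the only partition of $\{l_1,\dots,l_{2n}\}$ into pairs with constant sum is the symmetric one, and the purity relation \eqref{eq: 1020} to upgrade $L'$ to an even integer $2L$. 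Without these pieces the implication $(2)\Rightarrow(3)$ is a plan, not a proof, and the plan as stated (open orbit versus closed orbit, reducibility along strata) points in a direction that would not yield the conclusion without the $n_{t,t}^\pm=0$ dichotomy.
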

The proof of Theorem \ref{thm: generic repns with Shalika models} is standard, yet lengthy. Let us first deal with the easy part of Theorem \ref{thm: generic repns with Shalika models}.
\begin{proof}[Proof of Theorem \ref{thm: generic repns with Shalika models}]

\textbf{(3)$\Rightarrow$(1):} Let $\sigma$ be a generic Casselman-Wallach representation of $\GL_2(\C)$ with a central character $\omega_\sigma$. We fix a nontrivial unitary character $\psi$ of $\BC$. Then $\sigma$ admits a Whittaker model $\mathcal{W}(\sigma,\psi)$, i.e. there exists a nonzero continuous linear functional $\lambda_\sigma$ on the Fr\'{e}chet space $V_\sigma$ such that
    \begin{equation*}
        \langle \lambda, \sigma(\mtrtwo{1}{x}{}{1})v \rangle = \psi(x)\langle \lambda, v\rangle.
    \end{equation*}
    Thus
    \begin{equation*}
        \langle \lambda, \sigma(\mtrtwo{1}{x}{}{1}\mtrtwo{a}{}{}{a})v \rangle = \omega_\sigma(a)\psi(x)\langle \lambda, v\rangle,
    \end{equation*}
    which exactly coincides with \eqref{Eq: Def of Shalika Functional} for $n=1$. Hence any such $\sigma$ has a non-zero $(\omega_\sigma, \psi)$-Shalika model.

    Now we assume that there exists an integer $L$ such that \eqref{eq: 1001} holds. Using the trick as in \eqref{Eq: pi parabolic induction parameter GL2}, we can write $\pi$ as
    $$ \pi \simeq \Ind_{P}^{G}\, \sigma_1\otimes\sigma_2\otimes\cdots\otimes\sigma_n,$$
    where $P$ is the standard parabolic subgroup corresponding to the partition $[2^n]$ and all $\sigma_j$ are generic representations of $\GL_2(\C)$ sharing the same central character $\omega$. Hence all $\sigma_j$ have nonzero Shalika models associated with the same characters $\omega$ and $\psi$. Thus by \cite[Thoerem 2.1]{ChenJiangLinTianExplicitCohomologicalVectorReal}, $\pi$ automatically has a nonzero $(\omega, \psi)$-Shalika model.

    \textbf{(1)$\Rightarrow$(2):} We assume that $\pi$ has a nonzero $(\omega, \psi)$-Shalika model. Then by \cite[Theorem 3.1]{A-G-J}, the local integral $Z(v,s,\chi)$ defines a nonzero element in
    \begin{equation*}
    \mathrm{Hom}_{H}(V_\pi,\, \abs{\det}_\C^{-s+\frac{1}{2}}\chi^{-1}(\det)\otimes\abs{\det}_\C^{s-\frac{1}{2}}(\chi\omega)(\det))
    \end{equation*}
    whenever $s$ is not a pole of the standard local $L$-function $L(s,\pi\otimes\chi)$. Thus, Statement \textbf{(2)} follows from the fact that the poles of the local $L$-function $L(s,\pi\otimes\chi)$ form a countable discrete subset of $\C$.
    \end{proof}

    Hence it remains to prove \textbf{(2)$\Rightarrow$(3)} in Theorem \ref{thm: generic repns with Shalika models}, which is the most difficult part of the proof. The main strategy is to use the Bruhat theory.
    Our main reference is \cite[Chapter 5]{WarnerHarmonicAnalysis} (also see the original work \cite{Br}). In the following, given any Lie group $M$ and two Casselman-Wallach representations $\pi_1$ and $\pi_2$ of $M$, we denote by $\mathrm{Bil}_M(\pi_1, \pi_2)$ the space of intertwining forms between $\pi_1$ and $\pi_2$.

    Let $w_0$ be the Weyl element which changes the sequence
    \begin{equation*}
        (1,2,3,\cdots,2n)
    \end{equation*}
    into
    \begin{equation*}
        (1,3,5,\cdots,2n-1,2,4,6,\cdots, 2n).
    \end{equation*}
    As usual, we will not distinguish an Weyl element with the permutation matrix representing it. Set
    \begin{equation}\label{eq: def of w}
        w := w_0^{-1}.
    \end{equation}
    Write $H = H_1\times H_2$, where $H_1 \simeq \GL_n(\BC)$ is located in the upper left corner; $H_2 \simeq \GL_n(\BC)$ is located in the lower right corner. To better apply the double coset computation in \cite{Mat1}, we introduce a conjugacy of $H$ in $G$. Define
    \begin{equation}\label{eq: 1013}
       H' = \mathrm{Ad}(w)H,\quad H'_1 = \mathrm{Ad}(w)H_1,\quad H'_2 = \mathrm{Ad}(w)H_2,
    \end{equation}
    and
    \begin{equation}\label{eq: shalika model epsilon}
         \epsilon = w\mtrtwo{\RI_n}{}{}{-\RI_n}w^{-1} = \mathrm{diag}(1,-1,1,-1,\cdots,1,-1).
    \end{equation}
    Then by \cite[Proposition 4.1.3.1]{WarnerHarmonicAnalysis} and the Reciprocity Law \cite[Theorem 5.3.3.1]{WarnerHarmonicAnalysis} (also see \cite[Theorem 6;4]{Br}),
    \begin{equation}\label{eq: 1003}
    \begin{aligned}
     &\mathrm{dim Hom}_{H}(V_\pi,\, \abs{\det}_\C^{-s+\frac{1}{2}}\chi^{-1}(\det)\otimes\abs{\det}_\C^{s-\frac{1}{2}}(\chi\omega)(\det)) \\
     = &\mathrm{dim Hom}_{H'}(V_\pi,\, \abs{\det}_\C^{-s+\frac{1}{2}}\chi^{-1}(\det)\otimes\abs{\det}_\C^{s-\frac{1}{2}}(\chi\omega)(\det))\\
     = &\mathrm{dim Bil}_{H'}(V_\pi,\, \abs{\det}_\C^{s-\frac{1}{2}}\chi(\det)\otimes\abs{\det}_\C^{-s+\frac{1}{2}}(\chi\omega)^{-1}(\det))\\
     =&\mathrm{dim Bil}_{G}(V_\pi,\, \Ind_{H'}^{G}\,\abs{\det}_\C^{s-\frac{1}{2}}\chi(\det)\otimes\abs{\det}_\C^{-s+\frac{1}{2}}(\chi\omega)^{-1}(\det)).
    \end{aligned}
    \end{equation}
    To simplify our notation, we set
    \begin{equation}\label{eq: shalika model classification sigma}
        \sigma  =  \abs{\quad}_\C^{\frac{m}{2}} \chi_{l_1}\otimes\abs{\quad}_\C^{\frac{m}{2}} \chi_{l_{2}}\otimes\cdots\otimes\abs{\quad}_\C^{\frac{m}{2}} \chi_{l_{2n}}
    \end{equation}
    and
    \begin{equation}\label{eq: shalika model classification eta}
        \eta(s,\chi,\omega) =  \abs{\det}_F^{s-\frac{1}{2}}\chi(\det)\otimes\abs{\det}_F^{-s+\frac{1}{2}}(\chi\omega)^{-1}(\det).
    \end{equation}
    Now we rewrite \eqref{eq: 1003} as
    \begin{equation}\label{eq: 1009}
        \begin{aligned}
     &\mathrm{dim Hom}_{H}(V_\pi,\, \abs{\det}_\C^{-s+\frac{1}{2}}\chi^{-1}(\det)\otimes\abs{\det}_\C^{s-\frac{1}{2}}(\chi\omega)(\det)) \\
     = &\mathrm{dim Bil}_{G}(\Ind_{B}^{G}\, \sigma,\,\Ind_{H'}^{G}\,\eta(s,\chi,\omega)).
     \end{aligned}
    \end{equation}
    The right-hand side of \eqref{eq: 1009} can be estimated by \cite[Theorem 5.3.2.3]{WarnerHarmonicAnalysis} (also see \cite[Theorem 6;3]{Br}). Some standard notations need to be introduced in the framework of Bruhat theory before we proceed the estimate.

    Since $H$ (hence $H'$) is a symmetric subgroup of $G$, it is spherical. Thus, there are finitely many $B$-orbits in $G/H'$. The representative of each $x\in B\quo G/H'$ is in fact computed in the work of \cite{Mat1}, and we will come back to describe these representatives in the next subsection. For each $x\in B\quo G/H'$, we set $H_x' := B\cap xH'x^{-1}$ and define $\eta(s,\chi,\omega)^x$ to be the character of $H'_x$ via the formula
       \begin{equation}\label{eq: 1010}\eta(s,\chi,\omega)^x(y) := \eta(s,\chi,\omega)(x^{-1}yx).\end{equation}
    As usual, we write $\delta_{H_x'}$ for the modular character of $H_x'$ and $\fh'$  (resp. $\fb$) for the Lie algebra of $H'$ (resp. $B$). Then we define some finite-dimensional representations of $H_x'$ coming from transversal derivatives (see \cite[Page 411]{WarnerHarmonicAnalysis}):
    \begin{enumerate}
      \item For all $x\in B\quo G/H'$, we set $\Lambda_{x,0}$ to be the trivial representation of $H_x'$.
      \item If $BxH'$ is an open orbit, then for all positive integers $k$, we set $\Lambda_{x,k}$ to be the zero vector space.
      \item If $BxH'$ is not an open orbit (which implies that the quotient vector space $\fg/(\fb+x\fh'x^{-1})$ is nonzero), then we set $\Lambda_{x,1}$ to be the adjoint representation of $H_x'$ on $\fg/(\fb+x\fh'x^{-1})$; and for all positive integers $k$, we set
          $\Lambda_{x,k}$ to be the $k$-th symmetric power $\Sym^k \Lambda_{x,1}$.
    \end{enumerate}
    Now we are ready to state the estimate directly obtained from Bruhat theory.
    \begin{prop}\label{prop: bruhat theory estimate}
     With the notations introduced as above, we have that
       \begin{equation}\label{temp: eq 2500}
        \begin{aligned}
      &\mathrm{dim Hom}_{H}(V_\pi,\, \abs{\det}_\C^{-s+\frac{1}{2}}\chi^{-1}(\det)\otimes\abs{\det}_\C^{s-\frac{1}{2}}(\chi\omega)(\det)) \\
     \leq&\sum_{x\in B\quo G/H'}\sum_{k=0}^{2\dim G +1} \dim\Bil_{H'_x}( \sigma\otimes \eta(s,\chi,\omega)^x,\, \delta_B^{\frac{1}{2}}\delta_{H_x'}^{-1}\otimes\Lambda_{x,k}^\vee).
     \end{aligned}
    \end{equation}
    \end{prop}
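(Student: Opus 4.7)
The plan is to apply the Bruhat filtration machinery recalled in \cite{WarnerHarmonicAnalysis} directly to the bilinear form space appearing in the last line of \eqref{eq: 1009}. By that identity, the Hom space to be bounded equals $\dim\Bil_G(\Ind_B^G\sigma,\Ind_{H'}^G\eta(s,\chi,\omega))$, so the task becomes to upper-bound the latter by a sum indexed by $B\backslash G/H'$.

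First I would verify the hypotheses of \cite[Theorem 5.3.2.3]{WarnerHarmonicAnalysis} (equivalently \cite[Theorem 6;3]{Br}). Since $H$ is a symmetric subgroup of $G$, so is the conjugate $H'$; in particular $H'$ is spherical and $B\backslash G/H'$ is finite (the explicit representatives, due to \cite{Mat1}, will be listed in the next subsection). Warner's theorem then produces a filtration of $\Bil_G(\Ind_B^G\sigma,\Ind_{H'}^G\eta(s,\chi,\omega))$ indexed by the $B$-orbits ordered by closure, so that the dimension of the whole is bounded by the sum of the dimensions of the associated graded pieces.

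Second, for each orbit $BxH'$ and each order $k\geq 0$, the graded piece is governed by $H'_x$-equivariant bilinear forms between the restriction of $\sigma\otimes\eta(s,\chi,\omega)^x$ (the inducing data pulled back along $x$ via \eqref{eq: 1010}) and the space of order-$k$ transversal jets to $BxH'$ in $G$. The latter is exactly the representation $\Lambda_{x,k}^\vee$ of $H'_x$ on the $k$-th symmetric power of the normal space $\fg/(\fb+x\fh'x^{-1})$: it is trivial for $k=0$, is zero for $k\geq 1$ when $BxH'$ is open, and equals $\Sym^k$ of the adjoint action otherwise — precisely matching the definition preceding the proposition. The twist by $\delta_B^{1/2}\delta_{H'_x}^{-1}$ is the standard correction that compares the modular characters of $B$ and $H'_x$ under restriction, using the convention for normalized induction fixed in \cite[Chapter 5]{WarnerHarmonicAnalysis}.

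Finally, I would handle the truncation $k\leq 2\dim G+1$: since the normal space $\fg/(\fb+x\fh'x^{-1})$ has dimension at most $\dim G$ and Warner's estimate vanishes beyond a polynomial-in-$\dim G$ threshold, this crude bound is more than adequate, and is kept only for convenience. The main obstacle is therefore not conceptual but purely bookkeeping: aligning our normalizations of parabolic induction, modular characters, and the precise definition of $\Lambda_{x,k}$ with Warner's conventions, and carrying the conjugation by $w$ through \eqref{eq: 1013}–\eqref{eq: 1010} correctly. No new representation-theoretic input enters at this stage; the genuine work of exploiting \eqref{temp: eq 2500} to force condition \eqref{eq: 1001} is postponed to the orbit-by-orbit analysis in the next subsection.
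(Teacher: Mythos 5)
Your proposal follows essentially the same route as the paper's proof: reduce to $\dim\Bil_G(\Ind_B^G\sigma,\Ind_{H'}^G\eta(s,\chi,\omega))$ via \eqref{eq: 1009}, invoke \cite[Theorem 5.3.2.3]{WarnerHarmonicAnalysis} (equivalently \cite[Theorem 6;3]{Br}) to bound it by the double sum over $B\backslash G/H'$ and transversal-derivative orders, match the paper's convention that $\Lambda_{x,k}=0$ for $k>0$ on open orbits, and account for the modular-character twist $\delta_B^{1/2}\delta_{H'_x}^{-1}$. The paper additionally flags the left-versus-right modular-character convention discrepancy with \cite{WarnerHarmonicAnalysis}, a bookkeeping point your write-up alludes to but does not pin down; otherwise the two arguments coincide.
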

    \begin{proof}
       It follows directly from \cite[Theorem 5.3.2.3]{WarnerHarmonicAnalysis} and the remark below it (also see \cite[Theorem 6;3]{Br} and \cite[Equation 6;97]{Br}) that
       \begin{equation}\label{eq: estimate intertwining number}
       \begin{aligned}
          &\mathrm{dim Bil}_{G}(\Ind_{B}^{G}\, \sigma,\,\Ind_{H'}^{G}\,\eta(s,\chi,\omega))\\
          \leq&\sum_{x\in B\quo G/H'}\sum_{k=0}^{2 \dim G+1} \dim\Bil_{H'_x}( \sigma\otimes \eta(s,\chi,\omega)^x,\, \delta_B^{\frac{1}{2}}\delta_{H_x'}^{-1}\otimes\Lambda_{x,k}^\vee).
       \end{aligned}
       \end{equation}
       Here we emphasize that the notation $\delta$ in \cite{WarnerHarmonicAnalysis} is the \textbf{right} modular character (see \cite[Appendix 1, Page 474]{WarnerHarmonicAnalysis}), while the modular characters we use in this paper are all \textbf{left} modular characters, to be precise. When $k>0$ and $BxH'$ is an open orbit, we enforce $\Lambda_{x,k}$ to be a zero vector space as above in order for
       $$ \dim\Bil_{H'_x}( \sigma\otimes \eta(s,\chi,\omega)^x,\, \delta_B^{\frac{1}{2}}\delta_{H_x'}^{-1}\otimes\Lambda_{x,k}^\vee) =0.$$
       This exactly coincides with \cite[Theorem 5.3.2.3]{WarnerHarmonicAnalysis}. Now the Proposition follows from \eqref{eq: 1009} and \eqref{eq: estimate intertwining number}
    \end{proof}
    \begin{cor}\label{cor: temp 001}
       Retain the notations and assumptions in Theorem \ref{thm: generic repns with Shalika models}. If we further assume that Statement \textbf{(2)} of Theorem \ref{thm: generic repns with Shalika models} holds, then there exists an $x\in B\quo G/H'$ and some $k\geq 0$ such that for uncountably many $s\in\BC$,
       \begin{equation}\label{eq: temp 2700} \dim\Bil_{H'_x}( \sigma\otimes \eta(s,\chi,\omega)^x,\, \delta_B^{\frac{1}{2}}\delta_{H_x'}^{-1}\otimes\Lambda_{x,k}^\vee) \ne 0.\end{equation}
    \end{cor}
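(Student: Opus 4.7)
The plan is to combine the assumption of Statement \textbf{(2)} with the finite upper bound supplied by Proposition \ref{prop: bruhat theory estimate}, via a straightforward pigeonhole argument.

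First, I would invoke Statement \textbf{(2)} to obtain a countable discrete subset $S\sbst\BC$ such that for every $s\in\BC\setminus S$ the Hom space
\[
\mathrm{Hom}_{H}(V_\pi,\, \abs{\det}_\C^{-s+\frac{1}{2}}\chi^{-1}(\det)\otimes\abs{\det}_\C^{s-\frac{1}{2}}(\chi\omega)(\det))
\]
is nonzero; in particular, the left-hand side of \eqref{temp: eq 2500} is at least $1$ for every $s$ in the uncountable set $\BC\setminus S$.

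Next, I would observe that the right-hand side of \eqref{temp: eq 2500} is a \emph{finite} sum. Indeed, since $H$ is a symmetric (hence spherical) subgroup of $G$, so is its conjugate $H' = \mathrm{Ad}(w)H$, and therefore the double-coset space $B\quo G/H'$ is finite; the inner index $k$ runs over the finite range $0\leq k\leq 2\dim G + 1$. Let $\Omega$ denote the finite set of pairs $(x,k)$ indexing the sum on the right-hand side of \eqref{temp: eq 2500}. For every $s\in\BC\setminus S$, at least one term of this finite sum must be nonzero, since otherwise the sum would vanish while the left-hand side is positive.

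Finally, a pigeonhole argument closes the proof. For each $(x,k)\in\Omega$, set
\[
U_{x,k}=\{\, s\in\BC\setminus S \;:\; \dim\Bil_{H'_x}(\sigma\otimes \eta(s,\chi,\omega)^x,\, \delta_B^{\frac{1}{2}}\delta_{H_x'}^{-1}\otimes\Lambda_{x,k}^\vee)\ne 0\,\}.
\]
By the previous paragraph, $\BC\setminus S = \bigcup_{(x,k)\in\Omega} U_{x,k}$. Since the left-hand side is uncountable while $\Omega$ is finite, at least one $U_{x,k}$ must itself be uncountable, yielding the desired pair $(x,k)$ and the required uncountable set of values of $s$ satisfying \eqref{eq: temp 2700}. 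There is no substantive obstacle here: the entire argument is a direct finiteness-plus-pigeonhole consequence of Proposition \ref{prop: bruhat theory estimate}, and the only minor point requiring care is checking that $H'$ inherits sphericity from $H$, which is immediate as $H'$ is a $G$-conjugate of $H$.
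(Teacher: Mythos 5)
Your proposal is correct and matches the paper's argument: the paper likewise notes that Statement (2) gives a nonzero Hom space for uncountably many $s$, that the right-hand side of the Bruhat estimate \eqref{temp: eq 2500} is a finite sum, and then concludes by the implicit pigeonhole. Your write-up merely makes the pigeonhole step and the finiteness of $B\quo G/H'$ explicit, which is fine but not a different route.
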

    \begin{proof}
        It is clear that Statement \textbf{(2)} of Theorem \ref{thm: generic repns with Shalika models} implies that \eqref{eq: 1002} holds for uncountably many $s\in\BC$. Note that the right-hand side of \eqref{eq: estimate intertwining number} is a finite sum. Then Corollary \ref{cor: temp 001} follows  directly from Proposition \ref{prop: bruhat theory estimate}.
    \end{proof}

%%%%%%%%%%%%%%%%%%%%%%%%%%%%%%%%%%%%%%%%%%%%%%%%%%%%%%%%%%%%%%%%%%%%%%%%%%%%%%%%%%%%%%%%%%%%%%%%%%%%%%%%%%%%%%%%%%%%%%%%
    \subsection{Double Coset Decomposition $B\quo G/H'$}
    In view of Corollary \ref{cor: temp 001}, we aim to study all representatives $x\in B\quo G/H'$. This is already done in \cite{Mat1}. To be precise, N. Matringe computes the representatives $u_s^{-1}$ (see \cite[Proposition 3.2]{Mat1}) of $H'\quo G/B$ (the group $H'$ is denoted by $H$ in \cite{Mat1}). By taking the inverse, these $u_s$ are exactly the representatives of $B\quo G/H'$ we use in this paper. For consistency of notations, we will later write these representatives $u_s$ as $x_\mathcal{N}$. The notation $s$ is saved for a complex number for the local Friedberg-Jacquet integral. Now we provide the details.

    For $1\leq i,j\leq 2n, 1\leq t \leq 2n$, we consider nonnegative integers $n_{i,j}, n_{t,t}^+, n_{t,t}^-$ which satisfy the following properties:
    \begin{equation}\label{eq: temp 2600}
         n_{i,j}=n_{j,i},\quad n_{t,t} = n_{t,t}^{+}+n_{t,t}^-,\quad \sum_{j=1}^{2n} n_{i,j}= 1,\quad \sum_{t=1}^{2n} n_{t,t}^+ = \sum_{t=1}^{2n} n_{t,t}^-.
    \end{equation}
    Thus, the integer $2n$ has a partition
    \begin{equation}\label{eq: shalika model complex partition}
        2n = \sum_{i=1}^{2n}\sum_{j=1}^{2n} n_{i,j} = \sum_{1\leq i\ne j \leq 2n} n_{i,j} + \sum_{1\leq t\leq 2n} n_{t,t}^++n_{t,t}^-.
    \end{equation}
    %The representatives of the double coset $H'\quo G/B$ are computed in \cite{Mat1}.
    By \cite[Proposition 3.2]{Mat1},  each element of $H'\backslash G/B$ corresponds to a sequence of the following form
    \begin{equation}\label{eq: 1011}
       \begin{aligned}\mathcal{N} = \set{n_{i,j},n_{t,t}^{+}, n_{t,t}^-}{&\text{ all } n_{i,j}, n_{t,t}^+, n_{t,t}^- \text{\ are non-negative integers satisfying }\\ &\text{ Property \eqref{eq: temp 2600} and }1\leq i, j\leq 2n, 1\leq t \leq 2n}.
       \end{aligned}
    \end{equation}
    %such that if we set $n_{i,j}=n_{j,i}$ and $n_{t,t} = n_{t,t}^{+}+n_{t,t}^-$, then $\sum_{j=1}^{2n} n_{i,j}= 1$ and $\sum_{t=1}^{2n} n_{t,t}^+ = \sum_{t=1}^{2n} n_{t,t}^-$.
    For a matrix $A$, whenever $1\leq i\ne j\leq 2n$ or $1\leq t\leq 2n$, we can define the $(i,j)$-th row, $(t,t)^+$th row and $(t,t)^-$-th row of $A$ according to the partition \eqref{eq: shalika model complex partition}. Then it makes sense to say the $(i,j)$-th, $(t,t)^+$-th, $(t,t)^-$-th diagonal entry for a diagonal matrix $A$ in $G$. If $n_{i,j}=0$ ($n_{t,t}^+=0$, $n_{t,t}^-=0$ resp.), then there is no $(i,j)$-th ($(t,t)^+$-th, $(t,t)^-$-th resp.) row or diagonal entry. The representatives $x_{\mathcal{N}}^{-1}\in H'\quo G/B$ corresponding to the sequence $\mathcal{N}$ (defined in \eqref{eq: 1011}) can be chosen in the following way. We take a Weyl element $w_{\mathcal{N}}$ (i.e. a permutation matrix) such that $w_{\mathcal{N}}\epsilon w_{\mathcal{N}}^{-1}$ is the diagonal matrix (the matrix $\epsilon$ is defined in \eqref{eq: shalika model epsilon}) with the property that
    \begin{enumerate}
     \item for $i<j$, the $(i,j)$-th diagonal entry of $w_{\mathcal{N}}\epsilon w_{\mathcal{N}}^{-1}$ is 1; for $i>j$, the $(i,j)$-th diagonal entry of $w_{\mathcal{N}}\epsilon w_{\mathcal{N}}^{-1}$ is $-1$;
     \item the $(t,t)^+$-th diagonal entry of $w_{\mathcal{N}}\epsilon w_{\mathcal{N}}^{-1}$ is 1; the $(t,t)^-$-th diagonal entry of $w_{\mathcal{N}}\epsilon w_{\mathcal{N}}^{-1}$ is $-1$.
    \end{enumerate}
    We also take a block diagonal matrix $a_\mathcal{N}$ such that the $(t,t)^+$-th and $(t,t)^-$-th diagonal entries are 1, and for all $i<j$, the $((i,j),(j,i))$-th diagonal block is $\frac{1}{\sqrt{2}}\mtrtwo{1}{-1}{1}{1}$ (i.e. in the $(i,j)$-th row, the $(i,j)$-th entry is $\frac{1}{\sqrt{2}}$, and the $(j,i)$-th entry is $-\frac{1}{\sqrt{2}}$, the others are 0; while in the $(j,i)$-th row, the $(i,j)$-th and the $(j,i)$-th entries are both $\frac{1}{\sqrt{2}}$, the other entries are 0). Then we have the following representative $x_\mathcal{N}^{-1}\in H'\quo G/B$ with
    \begin{equation}\label{eq: 1014}
       x_\mathcal{N} := a_\mathcal{N}w_\mathcal{N}.
    \end{equation}
    Thus, we can take $x_\mathcal{N}$ to be the representatives of $B\quo G/H'$.
    %that we use in Proposition \ref{prop: bruhat theory estimate} and Corollary \ref{cor: temp 001} are $x_{\mathcal{N}}^{-1}$ with $x_\mathcal{N}$ defined in \eqref{eq: 1014}.

    With the representative $x=x_{\mathcal{N}}$, we write $H_{\mathcal{N}}'$ for the corresponding subgroup $H_x'$, that is, $$H'_{\mathcal{N}} := B \cap x_\mathcal{N}H'x_\mathcal{N}^{-1}.$$ Accordingly, we write $\delta_\mathcal{N}$ for the modular character of $H'_{\mathcal{N}}$ for simplicity. By \cite[Proposition 3.4]{Mat1}, the subgroup $H'_\mathcal{N}$ is a semidirect product of a diagonal subgroup $M_\mathcal{N}$ and a unipotent subgroup $U_\mathcal{N}$, where $M_\mathcal{N}$ consists of invertible matrices of the form:
    \begin{equation}\label{eq: 1012}
       m_\mathcal{N} = \mathrm{diag}(m_{1,1}^+,m_{1,1}^-,m_{1,2},\cdots,m_{1,2n},m_{2,1},m_{2,2}^+,m_{2,2}^-,m_{2,3},\cdots,m_{2,2n},\cdots,m_{2n,2n}^+,m_{2n,2n}^-),
    \end{equation}
    such that $m_{i,j}=m_{j,i}$ whenever $i\ne j$.

    To make the above description much more transparent, we provide an example in the case of $\GL_4(\BC)$ (that is $n=2$), which will also be useful in finishing the proof of Theorem \ref{thm: generic repns with Shalika models} in the next subsection.
    \begin{exmp}\label{example: GL4 computation}
       For $G = \GL_4(\C)$, the group $H'$ for $\GL_4(\C)$ consists of invertible matrices of the form
    $$\left(
\begin{array}{cccc}
 a & 0 & b & 0 \\
 0 & e & 0 & f \\
 c & 0 & d & 0 \\
 0 & g & 0 & h \\
\end{array}
\right).$$
   \textbf{Case 1}.
   Consider the sequence
   \begin{equation}\label{eq: set N 1}
       \mathcal{N} = \{n_{12}=n_{21}=1, n_{34}=n_{43}=1, \text{ all other } n_{ij}, n_{t,t}^+, n_{t,t}^- \text{ are zeroes}\}.
   \end{equation}
   Accordingly, we relabel the row number and column number of a $4\times 4$ matrix as follows
   \begin{align*}
   &\begin{array}{cccc}
     12 & 21 & 34 & 43
   \end{array}\\
   \begin{array}{c}
     12 \\
     21 \\
     34 \\
     43
   \end{array}
   &\left(
     \begin{array}{cccc}
       \star & \star & \star & \star \\
       \star & \star & \star & \star \\
       \star & \star & \star & \star \\
       \star & \star & \star & \star \\
     \end{array}
   \right).
   \end{align*}
   This means, for instance, that the usual second row is now called the $(2,1)$-th row, the usual third column is now called the $(3,4)$-th column. Then it is not hard to see from the descriptions of $w_\mathcal{N}$ and $a_{\mathcal{N}}$ that $w_\mathcal{N} = \RI_4$
   and
   $$a_\mathcal{N} = \left(
\begin{array}{cccc}
 \frac{1}{\sqrt{2}} & -\frac{1}{\sqrt{2}} & 0 & 0 \\
 \frac{1}{\sqrt{2}} & \frac{1}{\sqrt{2}} & 0 & 0 \\
 0 & 0 & \frac{1}{\sqrt{2}} & -\frac{1}{\sqrt{2}} \\
 0 & 0 & \frac{1}{\sqrt{2}} & \frac{1}{\sqrt{2}} \\
\end{array}
\right).$$
   Thus, by a direct matrix computation, the group $H'_{\mathcal{N}} = B\cap x_\mathcal{N}H'x_\mathcal{N}^{-1}$ consists of matrices of the form
   $$ \left(
\begin{array}{cccc}
 a & 0 & b & f \\
 0 & a & f & b \\
 0 & 0 & d & 0 \\
 0 & 0 & 0 & d \\
\end{array}
\right).$$

   \textbf{Case 2}.
   Consider the sequence
   \begin{equation}\label{eq: set N 2}\mathcal{N} = \{n_{13}=n_{31}=1, n_{24}=n_{42}=1, \text{ all other } n_{ij}, n_{t,t}^+, n_{t,t}^- \text{ are zeroes}\}.\end{equation}
   Accordingly, we relabel the row number and column number of a $4\times 4$ matrix as follows
   \begin{align*}
   &\begin{array}{cccc}
     13 & 24 & 31 & 42
   \end{array}\\
   \begin{array}{c}
     13 \\
     24 \\
     31 \\
     42
   \end{array}
   &\left(
     \begin{array}{cccc}
       \star & \star & \star & \star \\
       \star & \star & \star & \star \\
       \star & \star & \star & \star \\
       \star & \star & \star & \star \\
     \end{array}
   \right).
   \end{align*}
    Similarly, we have
   $$w_\mathcal{N} = \left(
\begin{array}{cccc}
 1 & 0 & 0 & 0 \\
 0 & 0 & 1 & 0 \\
 0 & 1 & 0 & 0 \\
 0 & 0 & 0 & 1 \\
\end{array}
\right),\qquad a_\mathcal{N} = \left(
\begin{array}{cccc}
 \frac{1}{\sqrt{2}} & 0 & -\frac{1}{\sqrt{2}} & 0 \\
 0 & \frac{1}{\sqrt{2}} & 0 & -\frac{1}{\sqrt{2}} \\
 \frac{1}{\sqrt{2}} & 0 & \frac{1}{\sqrt{2}} & 0 \\
 0 & \frac{1}{\sqrt{2}} & 0 & \frac{1}{\sqrt{2}} \\
\end{array}
\right).$$
   Then the group $H'_{\mathcal{N}}$ consists of matrices of the form
   $$ \left(
\begin{array}{cccc}
 a & b & 0 & 0 \\
 0 & d & 0 & 0 \\
 0 & 0 & a & b \\
 0 & 0 & 0 & d \\
\end{array}
\right).$$

   \textbf{Case 3}.
   Consider the sequence
   \begin{equation}\label{eq: set N 3}
       \mathcal{N} = \{n_{14}=n_{41}=1, n_{23}=n_{32}=1, \text{ all other } n_{ij}, n_{t,t}^+, n_{t,t}^- \text{ are zeroes}\}.
   \end{equation}
   Accordingly, we relabel the row number and column number of a $4\times 4$ matrix as follows
   \begin{align*}
   &\begin{array}{cccc}
     14 & 23 & 32 & 41
   \end{array}\\
   \begin{array}{c}
     14 \\
     23 \\
     32 \\
     41
   \end{array}
   &\left(
     \begin{array}{cccc}
       \star & \star & \star & \star \\
       \star & \star & \star & \star \\
       \star & \star & \star & \star \\
       \star & \star & \star & \star \\
     \end{array}
   \right).
   \end{align*}
   In this case, we have
   $$w_\mathcal{N} = \left(
\begin{array}{cccc}
 1 & 0 & 0 & 0 \\
 0 & 0 & 1 & 0 \\
 0 & 1 & 0 & 0 \\
 0 & 0 & 0 & 1 \\
\end{array}
\right),\qquad a_\mathcal{N} = \left(
\begin{array}{cccc}
 \frac{1}{\sqrt{2}} & 0 & 0 & -\frac{1}{\sqrt{2}} \\
 0 & \frac{1}{\sqrt{2}} & -\frac{1}{\sqrt{2}} & 0 \\
 0 & \frac{1}{\sqrt{2}} & \frac{1}{\sqrt{2}} & 0 \\
 \frac{1}{\sqrt{2}} & 0 & 0 & \frac{1}{\sqrt{2}} \\
\end{array}
\right).$$
   Then the group $H'_{\mathcal{N}}$ consists of matrices of the form
   $$ \left(
\begin{array}{cccc}
 a & 0 & 0 & 0 \\
 0 & d & 0 & 0 \\
 0 & 0 & d & 0 \\
 0 & 0 & 0 & a \\
\end{array}
\right).$$
   \end{exmp}

   Let us end this subsection with the following Lemma.
   \begin{lemma}\label{lemma: temp 002}
      Recall that $\delta_\mathcal{N}$ is the modular character of $H'_\mathcal{N}$. Then for $n=2$ and the sequences $\mathcal{N}$ given in \eqref{eq: set N 1}, \eqref{eq: set N 2}, \eqref{eq: set N 3}, the modular character $\delta_B^{-\frac{1}{2}}\delta_\mathcal{N}$ is trivial when restricted to the diagonal subgroup $M_\mathcal{N}$ (see \eqref{eq: 1012}).
   \end{lemma}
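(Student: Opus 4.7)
The plan is to verify the identity $\delta_B^{-1/2}\delta_\mathcal{N}|_{M_\mathcal{N}} = 1$ by a direct case-by-case computation, reading off $M_\mathcal{N}$ and $U_\mathcal{N}$ from the explicit matrix descriptions in Example \ref{example: GL4 computation}. Since $H'_\mathcal{N} = M_\mathcal{N} \ltimes U_\mathcal{N}$ and $M_\mathcal{N}$ lies in the diagonal torus, the (left) modular character restricted to $M_\mathcal{N}$ is
\begin{equation*}
\delta_\mathcal{N}(m) = \bigl|\det\bigl(\mathrm{Ad}(m)\big|_{\Lie(U_\mathcal{N})}\bigr)\bigr|_\C,
\end{equation*}
while $\delta_B$ on the diagonal torus of $\GL_4(\C)$ is the standard formula $\delta_B(\diag(t_1,t_2,t_3,t_4)) = |t_1|_\C^{3}\,|t_2|_\C\,|t_3|_\C^{-1}\,|t_4|_\C^{-3}$. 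Thus the computation reduces to (i) identifying $M_\mathcal{N}$ using the constraints $m_{i,j}=m_{j,i}$ from \eqref{eq: 1012}, and (ii) identifying the effective complex dimension of $U_\mathcal{N}$ together with the way its coordinates transform under conjugation by $M_\mathcal{N}$.

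In Case 1, the labels $(12),(21),(34),(43)$ force $M_\mathcal{N} = \diag(a,a,d,d)$, so $\delta_B(m) = |a/d|_\C^{4}$ and $\delta_B^{1/2}(m) = |a/d|_\C^{2}$. The two-complex-dimensional $U_\mathcal{N}$ is parametrized by the independent parameters $b,f$ (with the couplings $(1,3)=(2,4)=b$ and $(1,4)=(2,3)=f$), and each parameter scales by $a/d$ under conjugation by $m$; hence $\delta_\mathcal{N}(m) = |a/d|_\C^{2}$ and the product is $1$. In Case 2, the labels $(13),(24),(31),(42)$ give $M_\mathcal{N} = \diag(a,d,a,d)$ and $\delta_B(m) = |a/d|_\C^{2}$, while the one-complex-dimensional $U_\mathcal{N}$ (with coupled $(1,2)=(3,4)=b$ scaling by $a/d$) contributes $\delta_\mathcal{N}(m) = |a/d|_\C$; again the product equals $1$. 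In Case 3, the labels $(14),(23),(32),(41)$ give $M_\mathcal{N} = \diag(a,d,d,a)$, so both $\delta_B$ and $\delta_\mathcal{N}$ are trivial, since $|a|_\C^{3}|d|_\C|d|_\C^{-1}|a|_\C^{-3}=1$ and $U_\mathcal{N}$ itself is trivial.

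The entire argument is a bookkeeping exercise; the only potential obstacle is avoiding double-counting when identifying the independent coordinates of $U_\mathcal{N}$ and correctly extracting $M_\mathcal{N}$ from the relabelling tables in Example \ref{example: GL4 computation}. Once those identifications are nailed down, the verification in each case is a one-line calculation, and the three cases exhaust the content of the Lemma.
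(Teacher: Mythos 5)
Your computation is correct and fills in exactly the details the paper omits: the paper's proof consists of the single sentence that the Lemma "can be checked by direct computations in a case by case fashion," with no details given. You correctly read off $M_\mathcal{N}$ and $U_\mathcal{N}$ from the explicit forms of $H'_\mathcal{N}$ in Example \ref{example: GL4 computation}, apply the standard formula $\delta_H(mu) = \bigl|\det\bigl(\mathrm{Ad}(m)\big|_{\Lie(U)}\bigr)\bigr|$ for the modular character of a semidirect product $M\ltimes U$, and verify that the exponents balance in each of the three cases (with the right counting: $U_\mathcal{N}$ has complex dimension $2,1,0$ respectively, and each coordinate scales by $a/d$). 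Since the same convention for the modular character is used on both $\delta_B$ and $\delta_\mathcal{N}$, the product is $1$ independently of which (consistent) convention one adopts.
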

   \begin{proof}
       This Lemma can be checked by direct computations in a case by case fashion. We omit the details.
   \end{proof}

    %The above notations seem slightly complicated. Let us include a detailed computation on the case of $\GL_4(\C)$ here. The following computation will also be very illustrative in the later part of the proof this Theorem.
%%%%%%%%%%%%%%%%%%%%%%%%%%%%%%%%%%%%%%%%%%%%%%%%%%%%%%%%%%%%%%%%%%%%%%%%%%%%%%%%%%%%%%%%%%%%%%%%%%%%%%%%%%%%%%%%%%%%%%%%%%%%%%%%%%%%%%%%%%%%%
   \subsection{Finishing the proof of Theorem \ref{thm: generic repns with Shalika models}}
   In this Subsection, we will finish our proof of Theorem \ref{thm: generic repns with Shalika models}. Let us start with a simple Lemma.
   %The reason why we do not care about the cases when some $n_{t,t}\ne 0$ in Example \ref{example: GL4 computation} will also be clear from the following proof.
   \begin{lemma}\label{lemma: temp 003}
   With the above choice of $x_\mathcal{N}$ in \eqref{eq: 1014}, for any matrix $m_\mathcal{N}$ as in \eqref{eq: 1012}, the conjugation $(m_\mathcal{N}\mapsto x_\mathcal{N}^{-1} m_\mathcal{N} x_\mathcal{N})$ maps
    \begin{enumerate}
      \item the $(t,t)^+$-th diagonal entry $m_{t,t}^+$ of $m_{\mathcal{N}}$ onto a diagonal entry of $H_1'$ (defined in \eqref{eq: 1013});
      \item the $(t,t)^-$-th diagonal entry $m_{t,t}^-$ of $m_\mathcal{N}$ onto a diagonal entry of $H_2'$ (defined in \eqref{eq: 1013});
      \item the $(i,j)$-th diagonal entry $m_{i,j}$ onto a diagonal entry (denoted by $m_{i,j}'$ for later use) of $H_1'$ whenever $i<j$;
      \item the $(i,j)$-th diagonal entry $m_{i,j}$ onto a diagonal entry (denoted by $m_{i,j}'$ for later use) of $H_2'$ whenever $i>j$.
     \end{enumerate}
   \end{lemma}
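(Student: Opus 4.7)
The plan is to compute $x_\mathcal{N}^{-1} m_\mathcal{N} x_\mathcal{N}$ directly using the factorization $x_\mathcal{N} = a_\mathcal{N} w_\mathcal{N}$, so that
\[
x_\mathcal{N}^{-1} m_\mathcal{N} x_\mathcal{N} \;=\; w_\mathcal{N}^{-1}\bigl(a_\mathcal{N}^{-1} m_\mathcal{N} a_\mathcal{N}\bigr) w_\mathcal{N},
\]
and then track where each diagonal entry of $m_\mathcal{N}$ ends up.

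First I would reinterpret $H_1'$ and $H_2'$ in terms of the involution $\epsilon$. Since $H$ is the centralizer in $G$ of $\mathrm{diag}(\RI_n,-\RI_n)$, conjugation by $w$ shows that $H' = Z_G(\epsilon)$. Because $\epsilon = \mathrm{diag}(1,-1,1,-1,\dots,1,-1)$, a matrix lies in $H'$ iff its $(i,j)$-entry vanishes whenever $i$ and $j$ have different parities, and $H_1'$ (resp.\ $H_2'$) is the factor supported on the $+1$-eigenspace (resp.\ $-1$-eigenspace) of $\epsilon$, namely the odd (resp.\ even) standard positions. Thus the statement ``maps to a diagonal entry of $H_1'$ (resp.\ $H_2'$)'' is equivalent to ``lands at an odd (resp.\ even) standard diagonal position''.

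Next I would check that $a_\mathcal{N}^{-1} m_\mathcal{N} a_\mathcal{N} = m_\mathcal{N}$. On each $(t,t)^{\pm}$ position, $a_\mathcal{N}$ is trivial by construction. On each $((i,j),(j,i))$ block with $i\ne j$, the definition of $M_\mathcal{N}$ in \eqref{eq: 1012} gives $m_{i,j}=m_{j,i}$, so $m_\mathcal{N}$ restricted to this block is the scalar matrix $m_{i,j}\RI_2$, which commutes with the orthogonal block $\tfrac{1}{\sqrt 2}\mtrtwo{1}{-1}{1}{1}$ of $a_\mathcal{N}$. Consequently
\[
x_\mathcal{N}^{-1} m_\mathcal{N} x_\mathcal{N} \;=\; w_\mathcal{N}^{-1} m_\mathcal{N} w_\mathcal{N},
\]
which is just the diagonal matrix obtained from $m_\mathcal{N}$ by permuting its diagonal entries via $w_\mathcal{N}$.

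Finally I would invoke the defining property of $w_\mathcal{N}$. The condition that $w_\mathcal{N}\epsilon w_\mathcal{N}^{-1}$ has $+1$ (resp.\ $-1$) at the labeled positions $(i,j)$ with $i<j$ and $(t,t)^+$ (resp.\ $i>j$ and $(t,t)^-$) means exactly that the permutation $w_\mathcal{N}$ sends each of these labeled positions to a standard position where $\epsilon$ takes the prescribed sign, i.e.\ to an odd (resp.\ even) index of $\{1,\dots,2n\}$. Reading off the standard diagonal entries of $w_\mathcal{N}^{-1}m_\mathcal{N}w_\mathcal{N}$ then gives the four claims: $m_{t,t}^+$ and $m_{i,j}$ (for $i<j$) sit at odd positions, hence in $H_1'$, while $m_{t,t}^-$ and $m_{i,j}$ (for $i>j$) sit at even positions, hence in $H_2'$.

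The only delicate point—though not a real obstacle—is keeping straight the convention relating the ``labeled'' indexing $\{(i,j),(t,t)^\pm\}$ used to describe $m_\mathcal{N}$ and $w_\mathcal{N}\epsilon w_\mathcal{N}^{-1}$ with the standard indexing in which $\epsilon$ and $H_1',H_2'$ are described; once that bookkeeping is fixed at the start, everything follows by direct matrix computation and a glance at the worked-out examples in $\GL_4(\BC)$ given in Example~\ref{example: GL4 computation}.
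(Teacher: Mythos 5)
Your proof is correct and follows essentially the same path as the paper's: check that $a_\mathcal{N}$ commutes with $m_\mathcal{N}$ (using $m_{i,j}=m_{j,i}$ on each $2\times 2$ block), reduce to conjugation by $w_\mathcal{N}$, and then read off the positions from the defining property of $w_\mathcal{N}$ via the signs of $w_\mathcal{N}\epsilon w_\mathcal{N}^{-1}$. You add the useful clarifying observation that $H'=Z_G(\epsilon)$ with $H_1'$ (resp.\ $H_2'$) supported on the odd (resp.\ even) standard diagonal positions, which makes the last step — left implicit as ``follows directly from the definition of $w_\mathcal{N}$'' in the paper — fully explicit.
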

   \begin{proof}
      For all $i< j$, the $((i,j), (j,i))$-th diagonal block of $m_\mathcal{N}$ is $\mtrtwo{m_{i,j}}{}{}{m_{j,i}}$, which commutes with the $((i,j), (j,i))$-th diagonal block of $a_\mathcal{N}$ (since $m_{i,j} = m_{j,i}$). This implies that
      $$ x_\mathcal{N}^{-1} m_\mathcal{N} x_\mathcal{N} = w_\mathcal{N}^{-1}a_\mathcal{N}^{-1} m_\mathcal{N} a_\mathcal{N}w_\mathcal{N} = w_\mathcal{N}^{-1} m_\mathcal{N} w_\mathcal{N}.$$
      Then the Lemma follows directly from the definition of $w_\mathcal{N}$.
   \end{proof}
   \begin{prop}\label{prop: temp 004}
      Retain the notations and assumptions in Theorem \ref{thm: generic repns with Shalika models}. If we further assume that Statement \textbf{(2)} of Theorem \ref{thm: generic repns with Shalika models} holds, then there exists a sequence $\mathcal{N}$ as in \eqref{eq: 1011} and an integer $k\geq 0$ satisfying the following properties.
      \begin{enumerate}
        \item All $n_{t,t}^+ = n_{t,t}^- = 0$ ($1\leq t\leq 2n$).
        \item Denote by $\tilde{\Lambda}_{\mathcal{N},k}$ the restriction of $\delta_B^{-\frac{1}{2}}\delta_{\mathcal{N}}\otimes\Lambda_{x_\mathcal{N},k}$ to the subgroup $M_\mathcal{N}$. There exists a one-dimensional subrepresentation of $\tilde{\Lambda}_{\mathcal{N},k}$ denoted by $\xi$ such that
            \begin{equation}\label{eq: 1052}
                 \sigma(m_\mathcal{N})\cdot \prod_{1\leq i<j\leq 2n}\frac{1}{\omega( m_{i,j})} = \xi(m_\mathcal{N})
            \end{equation}
            holds for all $m_\mathcal{N}\in M_\mathcal{N}$.
      \end{enumerate}
      As a consequence, there exists a partition of the set $\{1,2,\cdots\}$ into exact $n$ pairs
      $$(i_1,j_1), (i_2,j_2), \cdots, (i_n,j_n) \qquad i_k<j_k \textrm{ for all $k=1,2,\cdots,n$}$$
      such that only $m_{i_k,j_k},m_{j_k,i_k}$ ($k=1,2,\cdots,n$) appear in $m_\mathcal{N}$.
   \end{prop}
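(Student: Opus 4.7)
My plan starts from Corollary \ref{cor: temp 001}, which supplies a sequence $\mathcal{N}$ as in \eqref{eq: 1011} and an integer $k\geq 0$ for which the space
$\Bil_{H'_\mathcal{N}}(\sigma\otimes\eta(s,\chi,\omega)^{x_\mathcal{N}},\,\delta_B^{1/2}\delta_\mathcal{N}^{-1}\otimes\Lambda_{x_\mathcal{N},k}^\vee)$
is non-zero for uncountably many $s\in\BC$. Since $\sigma$, $\eta(s,\chi,\omega)^{x_\mathcal{N}}$ and $\delta_B^{1/2}\delta_\mathcal{N}^{-1}$ are all one-dimensional characters of $H'_\mathcal{N}$, the standard identification $\Bil_{H'_\mathcal{N}}(V_1,V_2)\cong\Hom_{H'_\mathcal{N}}(V_1,V_2^\vee)$ translates this non-vanishing, after restriction to the diagonal torus $M_\mathcal{N}\subset H'_\mathcal{N}$, into the existence of a character $\xi_s$ of $M_\mathcal{N}$ appearing as a subrepresentation of $\tilde{\Lambda}_{\mathcal{N},k}$ which satisfies $\sigma\cdot\eta(s,\chi,\omega)^{x_\mathcal{N}}=\xi_s$ on $M_\mathcal{N}$. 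Because $\tilde{\Lambda}_{\mathcal{N},k}$ is finite-dimensional and $M_\mathcal{N}$ is abelian, only finitely many characters occur in $\tilde{\Lambda}_{\mathcal{N},k}$; a pigeonhole argument then produces a single $\xi$ serving uncountably many $s$, and analyticity in $s$ of $\eta(s,\chi,\omega)^{x_\mathcal{N}}$ upgrades the identity to all $s\in\BC$.

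Next I would evaluate $\eta(s,\chi,\omega)^{x_\mathcal{N}}(m_\mathcal{N})$ explicitly using Lemma \ref{lemma: temp 003}. For each active pair $(i,j)$ with $i<j$, the entries $m_{i,j}=m_{j,i}$ land under $x_\mathcal{N}^{-1}(\cdot)x_\mathcal{N}$ in $H_1'$ and $H_2'$ respectively, and their combined contribution
$|m_{i,j}|_\BC^{s-\frac{1}{2}}\chi(m_{i,j})\cdot|m_{i,j}|_\BC^{-s+\frac{1}{2}}(\chi\omega)^{-1}(m_{i,j})=\omega^{-1}(m_{i,j})$
is free of $s$. On the other hand, each $m_{t,t}^+$ contributes $|m_{t,t}^+|_\BC^{s-\frac{1}{2}}\chi(m_{t,t}^+)$ and each $m_{t,t}^-$ contributes $|m_{t,t}^-|_\BC^{-s+\frac{1}{2}}(\chi\omega)^{-1}(m_{t,t}^-)$, each carrying a non-trivial $s$-dependence that no other factor in the identity can cancel. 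Isolating such a factor by choosing $m_\mathcal{N}\in M_\mathcal{N}$ with a single $m_{t,t}^+\in\BC^\times$ non-trivial and all other coordinates equal to $1$ reduces $\sigma\cdot\eta^{x_\mathcal{N}}=\xi$ to a relation of the form $c\cdot|m_{t,t}^+|_\BC^{s-1/2}=c'$ with $c,c'$ independent of $s$; taking $m_{t,t}^+$ with $|m_{t,t}^+|_\BC\neq 1$ then gives a contradiction. Hence $n_{t,t}^+=0$ for every $t$, and the same argument with $m_{t,t}^-$ yields $n_{t,t}^-=0$, proving part (1).

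With part (1) in hand, the computation above reduces $\eta^{x_\mathcal{N}}(m_\mathcal{N})$ to $\prod_{1\leq i<j\leq 2n,\,n_{i,j}=1}\omega^{-1}(m_{i,j})$, which under the convention $m_{i,j}=1$ for inactive pairs is precisely $\prod_{1\leq i<j\leq 2n}\omega^{-1}(m_{i,j})$, giving the identity \eqref{eq: 1052} of part (2). For the concluding consequence, the remaining data $n_{i,j}=n_{j,i}\in\{0,1\}$ subject to $\sum_j n_{i,j}=1$ with all diagonal $n_{t,t}$ vanishing encode a fixed-point-free involution on $\{1,2,\ldots,2n\}$, equivalently a perfect matching into $n$ unordered pairs $(i_k,j_k)$ with $i_k<j_k$; only the variables $m_{i_k,j_k}=m_{j_k,i_k}$ then appear in $m_\mathcal{N}$. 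The main technical point I anticipate is ensuring that one fixed $\xi$ genuinely serves for all $s$, which is precisely where the pigeonhole-plus-analyticity step does the work; once that is in place, the remainder is bookkeeping organized entirely by Lemma \ref{lemma: temp 003}.
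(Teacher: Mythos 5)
Your proposal is correct and follows essentially the same route as the paper: invoke Corollary \ref{cor: temp 001}, restrict to $M_\mathcal{N}$, use complete reducibility to produce a character $\xi$ in $\tilde{\Lambda}_{\mathcal{N},k}$ equal to $\sigma\otimes\eta(s,\chi,\omega)^{x_\mathcal{N}}$, compute $\eta^{x_\mathcal{N}}$ via Lemma \ref{lemma: temp 003}, and rule out $n_{t,t}^\pm\neq 0$ from the genuine $s$-dependence. Your pigeonhole-plus-analyticity step is just a slightly more explicit phrasing of the paper's observation that each fixed $\xi_0$ yields only countably many solutions in $s$.
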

   \begin{proof}
     %It is clear that the character $\eta(s,\chi,\omega)^{x_\mathcal{N}}$ is trivial on $U_\mathcal{N}$, since $U_\mathcal{N}$ is a unipotent subgroup. Here we compute its value on $m_\mathcal{N}\in M_\mathcal{N}$.
     By Corollary \ref{cor: temp 001}, there exists a sequence $\mathcal{N}$ given as in \eqref{eq: 1011} and some integer $k\geq 0$ such that for uncountably many $s\in\BC$, \eqref{eq: temp 2700} holds with $x = x_\mathcal{N}$ defined in \eqref{eq: 1014}.
     %For such a representative $x$, we may write it as $x_\mathcal{N}^{-1}$ for some set $\mathcal{N}$ given as in \eqref{eq: 1011}.
     Now we will show that this sequence $\mathcal{N}$ and the integer $k$ satisfy the properties in Proposition \ref{prop: temp 004}.

     By restriction to the subgroup $M_\mathcal{N}$, we conclude from \eqref{eq: temp 2700} that for uncountably many $s\in\BC$, there exists a nonzero $M_\mathcal{N}$-intertwining operator between $\sigma\otimes \eta(s,\chi,\omega)^{x_\mathcal{N}}$ and $\tilde{\Lambda}_{\mathcal{N},k}$. Since $M_\mathcal{N}$ is abelian, the finite-dimensional representation $\tilde{\Lambda}_{\mathcal{N},k}$ is completely reducible. Thus, the character $\sigma\otimes \eta(s,\chi,\omega)^{x_\mathcal{N}}$ must be equal to a one-dimensional subrepresentation of $\tilde{\Lambda}_{\mathcal{N},k}$, denoted by $\xi$.
     %Let us denote this one-dimensional subrepresentation as $\xi$.

     By Lemma \ref{lemma: temp 003}, we have
     \begin{equation}\label{eq: 9997}
        \begin{aligned}
           &\eta(s,\chi,\omega)^{x_\mathcal{N}}(m_\mathcal{N})\\
           =&\eta(s,\chi,\omega)(x_\mathcal{N}^{-1}\cdot m_\mathcal{N}\cdot x_\mathcal{N})\\
           =&\Big(\prod_{t=1}^{2n}\frac{\abs{m_{t,t}^{+}}_\C^{s-\frac{1}{2}}\chi(m_{t,t}^+)}{\abs{m_{t,t}^{-}}_\C^{s-\frac{1}{2}}\chi( m_{t,t}^-)}\cdot\frac{1}{\omega(m_{t,t}^-)}\Big)\cdot\Big(\prod_{1\leq i<j\leq 2n}\frac{\abs{ m_{i,j}}_\C^{s-\frac{1}{2}}\chi(m_{i,j})}{\abs{m_{j,i}}_\C^{s-\frac{1}{2}}\chi(m_{j,i})}\cdot\frac{1}{\omega(m_{j,i})}\Big), \\
        \end{aligned}
     \end{equation}
     which can be simplified (by using $m_{i,j}=m_{j,i}$) as
     \begin{equation}\label{eq: 1016}
         \Big(\prod_{t=1}^{2n}\frac{\abs{m_{t,t}^{+}}_\C^{s-\frac{1}{2}}\chi( m_{t,t}^+)}{\abs{m_{t,t}^{-}}_\C^{s-\frac{1}{2}}\chi( m_{t,t}^-)}\cdot\frac{1}{\omega(m_{t,t}^-)}\Big)\cdot\Big(\prod_{1\leq i<j\leq 2n}\frac{1}{\omega( m_{i,j})}\Big).
     \end{equation}

     If $n_{t,t}^+$ or $n_{t,t}^-$ were nonzero for some $1\leq t \leq 2n$, then from \eqref{eq: 1016}, the restriction of the character $\eta(s,\chi,\omega)^{x_\mathcal{N}}$ on $M_\mathcal{N}$ is a character genuinely depending on $s\in\BC$. Note that there are only finitely many one-dimensional subrepresentations of $\tilde{\Lambda}_{\mathcal{N},k}$. For each one-dimensional subrepresentation $\xi_0$ of $\tilde{\Lambda}_{\mathcal{N},k}$, since $\xi_0$ is independent of $s\in\BC$,
     %Under the assumption that
     %$$\mathrm{dim Hom}_{H}(V_\pi, \abs{\det}_\C^{-s+\frac{1}{2}}\chi^{-1}(\det)\otimes\abs{\det}_\C^{s-\frac{1}{2}}(\chi\omega)(\det))\ne 0,$$
     %we can conclude from \eqref{eq: 1009} and \eqref{eq: estimate intertwining number} that at least one intertwining number $$i(\sigma,\eta(s,\chi,\omega),x,k)\geq 1.$$
     %In other words, for almost all $s$, there exists a non-zero intertwining operator between $$\sigma\otimes \eta(s,\chi,\omega)^{x_\mathcal{N}}$$ and a finite dimensional representation $\tilde{\Lambda}_k$, where $\tilde{\Lambda}_k$ comes from transversal derivatives and is independent on $s$. By restriction on the abelian subgroup $M_\mathcal{N}$, we can regard $$\sigma\otimes \eta(s,\chi,\omega)^{x_\mathcal{N}}$$ as a one dimensional subrepresentation of $\tilde{\Lambda}_k$.
     %Thus there exists a character $\xi$ depending only on $\tilde{\Lambda}_k$ (not depending on $s$) such that
     the equation of $s$
       \begin{equation}\label{eq: 1015}\sigma\otimes \eta(s,\chi,\omega)^{x_\mathcal{N}}(m_\mathcal{N}) = \xi_0(m_\mathcal{N}),\qquad \textrm{ for any } m_\mathcal{N}\in M_\mathcal{N},\end{equation}
     has only countably many solutions in $\BC$. This implies that for only countably many $s\in\BC$, \eqref{eq: temp 2700} holds. It is in contradiction to Corollary \ref{cor: temp 001}. Thus, $n_{t,t}^+ = n_{t,t}^- = 0$ for all $1\leq t\leq 2n.$

     %Because we only have finitely many representatives $x_\mathcal{N}$; for each fixed $x_{\mathcal{N}}$, the sum over $k$ in \eqref{eq: estimate intertwining number} is also a finite sum; and for each fixed $x_\mathcal{N}$ and $k$, the representation $\tilde{\Lambda}_k$ decomposes into finitely many characters of $M_\mathcal{N}$, hence we only have finitely many equations of the form \eqref{eq: 1015}. If the character $\eta(s,\chi,\omega)^{x_\mathcal{N}}$ depends on $s$, then as equations of $s$, the solutions to all such equations of the form \eqref{eq: 1015} form a countable, discrete subset of $\C$. Thus, we can conclude that there exists a set $\mathcal{N}$ such that $\eta(s,\chi,\omega)^{x_\mathcal{N}}$ does not depend on $s$.
     %It remains to prove the second statement of this Proposition.
     In \eqref{eq: 1016}, $n_{t,t}^+ = n_{t,t}^- = 0$ for all $1\leq t \leq 2n$ means that all $m_{t,t}^+, m_{t,t}^-$ do not appear in the diagonal matrix $m_\mathcal{N}$. Hence
     \begin{equation}\label{eq: temp 1052}
      \eta(s,\chi,\omega)^{x_\mathcal{N}}(m_\mathcal{N}) =  \prod_{1\leq i<j\leq 2n}\frac{1}{\omega( m_{i,j})}.
     \end{equation}
     Now the second statement follows directly from \eqref{eq: 1015} and \eqref{eq: temp 1052}, with $\xi_0$ replaced by $\xi$.

     With the fact that $n_{t,t}^+ = n_{t,t}^- = 0$ for all $1\leq t \leq 2n$, it is now clear from the identity $\sum_{j=1}^{2n} n_{i,j}= 1$ that either $n_{i,j}=n_{j,i}=1$ or $n_{i,j}=n_{j,i}=0$. Moreover, if $n_{i,j}=n_{j,i}=1,$ then for all $t\ne j$, $n_{i,t} = n_{t,i} = 0$. This directly leads to a partition of $\{1,2,\cdots,2n\}$ in the final statement of this Proposition.
     \end{proof}

     \begin{lemma}\label{lemma: temp 007}
         Let $\mathcal{N}$ be the sequence in Proposition \ref{prop: temp 004} and $k$ be an integer satisfying the second statement of Proposition \ref{prop: temp 004}. Then for every one-dimensional subrepresentation $\xi$ of $\Lambda_{x_\mathcal{N},k}\lvert_{M_\mathcal{N}}$, there exist integers $u_{i,j}$ $(1\leq i< j\leq 2n)$ satisfying
         \begin{equation}\label{eq: 9999}\sum_{i,j} u_{i,j}=0\end{equation}
         such that when $m_\mathcal{N}\in M_\mathcal{N}$ is written as \eqref{eq: 1012},
         \begin{equation}\label{eq: 1054} \xi(m_\mathcal{N}) = \prod_{1\leq i<j\leq 2n}m_{i,j}^{u_{i,j}}.\end{equation}
         Here in \eqref{eq: 1054}, we adopt the convention that if $m_{i,j}$ does not appear in $m_\mathcal{N}$, then $u_{i,j}=0$.
     \end{lemma}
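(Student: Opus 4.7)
The plan is to reduce the lemma to the standard weight decomposition of the adjoint representation of $\mathfrak{g} = \mathfrak{gl}_{2n}(\C)$ restricted to $M_\mathcal{N}$. First, I would dispose of the degenerate cases directly from the definition of $\Lambda_{x_\mathcal{N},k}$: when $k=0$, the representation is trivial and one takes all $u_{i,j}=0$; when $Bx_\mathcal{N}H'$ is an open orbit and $k\geq 1$, $\Lambda_{x_\mathcal{N},k}$ is the zero space and there is nothing to check. Thus I may assume $\Lambda_{x_\mathcal{N},k}=\Sym^k\Lambda_{x_\mathcal{N},1}$ with $\Lambda_{x_\mathcal{N},1}=\mathfrak{g}/(\mathfrak{b}+x_\mathcal{N}\mathfrak{h}'x_\mathcal{N}^{-1})$.

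Next, I would compute the weights of $M_\mathcal{N}$ acting on all of $\mathfrak{g}$ via $\mathrm{Ad}$. The conjugation $m_\mathcal{N}\mapsto x_\mathcal{N}^{-1}m_\mathcal{N} x_\mathcal{N}$ lands in the diagonal torus $T\subset G$, and Lemma \ref{lemma: temp 003} (combined with the identity $m_{i,j}=m_{j,i}$) tells us that the $p$-th diagonal coordinate $e_p$ of $T$, when pulled back to $M_\mathcal{N}$, equals $m_{i_p,j_p}$ for a unique pair $(i_p,j_p)$ with $i_p<j_p$ appearing in $\mathcal{N}$. Decomposing $\mathfrak{g}$ into $T$-root spaces $\mathfrak{g}_\alpha$ indexed by $\alpha=e_p-e_q$, I conclude that the $M_\mathcal{N}$-weight on each $\mathfrak{g}_\alpha$ is the monomial $m_{i_p,j_p}\,m_{i_q,j_q}^{-1}$ (which is trivial when $(i_p,j_p)=(i_q,j_q)$, and in particular on the Cartan part). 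In every case the sum of exponents equals $+1-1=0$.

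Then I would descend to the subquotient and take the symmetric power: since $\Lambda_{x_\mathcal{N},1}$ is an $M_\mathcal{N}$-subquotient of $\mathfrak{g}$, its weights are a subset of the weights just computed; passing to $\Sym^k$ gives weights that are $k$-fold products of such monomials, hence still of the form $\prod_{1\leq i<j\leq 2n}m_{i,j}^{u_{i,j}}$ with $\sum_{i,j}u_{i,j}=0$ (setting $u_{i,j}=0$ for pairs $(i,j)$ not in $\mathcal{N}$, consistent with the stated convention).

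Finally, since $M_\mathcal{N}\simeq(\C^\times)^n$ is a (connected) abelian algebraic group and $\Lambda_{x_\mathcal{N},k}|_{M_\mathcal{N}}$ is a finite-dimensional algebraic representation, it decomposes as a direct sum of one-dimensional weight spaces; every one-dimensional subrepresentation $\xi$ is therefore one of these weights, yielding \eqref{eq: 1054} together with $\sum_{i,j}u_{i,j}=0$. I expect the main (but purely bookkeeping) difficulty to lie in translating Lemma \ref{lemma: temp 003} into the explicit identification $e_p\leftrightarrow m_{i_p,j_p}$, since this is precisely what guarantees that the root-sum-zero property of the adjoint representation survives intact as the constraint $\sum u_{i,j}=0$ in the statement.
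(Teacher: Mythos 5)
Your proof is correct, and its overall skeleton (dispose of $k=0$ and the open-orbit case; realize $\Lambda_{x_\mathcal{N},1}$ as a quotient of $\mathfrak g/\mathfrak b\cong\mathfrak n^-$; decompose into root spaces for the torus containing $M_\mathcal{N}$; pass to $\Sym^k$ and read off weights) is the same as the paper's. The genuine difference is in how the constraint $\sum u_{i,j}=0$ is obtained: you track the exponent sum through each root weight $e_p-e_q$, observing it is $+1-1=0$ on every root space and that this persists under quotients and symmetric powers; the paper instead observes that the center $Z$ of $G$ sits inside $M_\mathcal{N}$ and acts trivially on $\mathfrak n^-$, so evaluating $\xi$ on scalar matrices $z\,\mathrm{I}_{2n}$ (where every $m_{i,j}=z$) immediately forces $z^{\sum u_{i,j}}=1$. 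The two arguments encode the same fact (the adjoint representation has trivial central character), but the paper's is shorter while yours is more explicit about the individual weights. One small correction to your writeup: the adjoint action defining $\Lambda_{x_\mathcal{N},k}$ is that of $H'_{x_\mathcal{N}}=B\cap x_\mathcal{N}H'x_\mathcal{N}^{-1}$ directly on $\mathfrak g/(\mathfrak b+x_\mathcal{N}\mathfrak h'x_\mathcal{N}^{-1})$, and $M_\mathcal{N}\subset T$ is already diagonal with $p$-th entry literally equal to $m_{i_p,j_p}$ by \eqref{eq: 1012}; the conjugation $m_\mathcal{N}\mapsto x_\mathcal{N}^{-1}m_\mathcal{N}x_\mathcal{N}$ and Lemma~\ref{lemma: temp 003}, which merely permutes these diagonal entries, play no role here (they are used in the proof of Proposition~\ref{prop: temp 004} to compute $\eta(s,\chi,\omega)^{x_\mathcal{N}}$, not to compute $\Lambda_{x_\mathcal{N},1}$). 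This does not affect the weights you obtain, so the argument stands.
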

     \begin{proof}
     Note that when $k=0,$ by definition, $\Lambda_{x_\mathcal{N},0}$ is always a trivial representation, hence the Lemma holds trivially. If $k>0$, then $Bx_\mathcal{N}H'$ is not an open orbit (otherwise $\Lambda_{x_\mathcal{N},k}$ is a zero vector space).          Consider the restriction of $\Lambda_{x_\mathcal{N},1} = \fg/(\fb+x_\mathcal{N}\fh'x_\mathcal{N}^{-1})$ to the abelian subgroup $M_\mathcal{N}$. It is isomorphic to a subrepresentation of the adjoint representation of $M_\mathcal{N}$ on the Lie algebra $\mathfrak{n}^-$, the maximal lower nilpotent subalgebra of $\fg$. Hence every one-dimensional subrepresentation $\xi$ of $\Lambda_{x_\mathcal{N},1}\lvert_{M_\mathcal{N}}$ is a character of the form \eqref{eq: 1054}. For general $k>0$, since $\Lambda_{x_\mathcal{N},k} = \Sym^k \Lambda_{x_\mathcal{N},1}$, every one-dimensional subrepresentation $\xi$ of $\Lambda_{x_\mathcal{N},1}\lvert_{M_\mathcal{N}}$ is also a character of the form \eqref{eq: 1054}. To see that the integers $u_{i,j}$ satisfy \eqref{eq: 9999}, we observe that the center $Z$ of $G$ is contained in $H_\mathcal{N}' = B\cap x_\mathcal{N}H'x_\mathcal{N}^{-1}.$ Thus, $Z$ is a subgroup of $M_\mathcal{N}$. Since the center $Z$ acts trivially on $\mathfrak{n}^-$ under the adjoint action, by restricting \eqref{eq: 1054} to $Z$, we see that
     $$1 = \prod_{1\leq i<j\leq 2n} \, z^{u_{i,j}} = z^{\sum u_{i,j}} \qquad \textrm{ for all } z\in\BC.$$ Thus, \eqref{eq: 9999} holds.
     \end{proof}

     \begin{prop}\label{prop: temp 005}
         Let $\mathcal{N}$ be the subset in Proposition \ref{prop: temp 004}. Then the restriction of the character $\delta_B^{-\frac{1}{2}}\delta_\mathcal{N}$ on the diagonal subgroup $M_\mathcal{N}$ is trivial.
     \end{prop}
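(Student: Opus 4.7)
The strategy is to reduce the general claim to the three $n=2$ cases already verified in Lemma \ref{lemma: temp 002}. The reduction comes from a block decomposition of both the character $\delta_B^{1/2}$ and the unipotent radical of $H'_\mathcal{N}$ indexed by pairs drawn from the pair-partition of $\{1,\ldots,2n\}$ supplied by Proposition \ref{prop: temp 004}. Write this partition as $\{1,\ldots,2n\}=\bigsqcup_{k=1}^{n}\{i_k,j_k\}$ with $i_k<j_k$; a generic $m_\mathcal{N}\in M_\mathcal{N}$ is then parametrized by the $n$ independent complex numbers $m_{i_k,j_k}=m_{j_k,i_k}$ for $k=1,\ldots,n$.

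The first step is to expand $\delta_B^{1/2}(m_\mathcal{N})=\prod_{r<s}|m_r/m_s|_{\mathbb{C}}^{1/2}$ and group the factors by pairs of pair-indices: the factors whose indices $r<s$ both lie inside a single pair $\{i_k,j_k\}$ contribute trivially because $m_{i_k,j_k}=m_{j_k,i_k}$, so
\[
\delta_B^{1/2}(m_\mathcal{N})=\prod_{1\le k<l\le n}\delta_B^{(k,l),1/2}(m_\mathcal{N}),
\]
where $\delta_B^{(k,l)}$ denotes the $\delta_B$-factor of the $\GL_4$-Levi indexed by the four rows $\{i_k,j_k,i_l,j_l\}$.

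The second step is to analyse the unipotent radical $U_\mathcal{N}$ of $H'_\mathcal{N}$ using Lemma \ref{lemma: temp 003} together with the explicit formulas for $x_\mathcal{N}=a_\mathcal{N}w_\mathcal{N}$. Because $a_\mathcal{N}$ only mixes the $(i,j)$-th and $(j,i)$-th rows within each individual pair of $\mathcal{N}$, every nonzero off-diagonal entry of a generic element of $x_\mathcal{N}H'x_\mathcal{N}^{-1}$ that survives the intersection with $B$ necessarily links two rows belonging to two distinct pairs of the partition, while entries linking two rows within a single pair are forced to vanish. Consequently
\[
\mathrm{Lie}(U_\mathcal{N})=\bigoplus_{1\le k<l\le n}\mathfrak{u}_\mathcal{N}^{(k,l)},
\]
where each summand is supported on the four rows $\{i_k,j_k,i_l,j_l\}$, is isomorphic to the unipotent radical computed in Example \ref{example: GL4 computation} for the appropriate one of the three relative orderings, and is stabilised by the adjoint action of $M_\mathcal{N}$. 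This decomposition yields $\delta_\mathcal{N}(m_\mathcal{N})=\prod_{1\le k<l\le n}\delta_\mathcal{N}^{(k,l)}(m_\mathcal{N})$.

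For each pair of pair-indices $1\le k<l\le n$, the factor $\delta_B^{(k,l),-1/2}\delta_\mathcal{N}^{(k,l)}$ depends only on the two $M_\mathcal{N}$-variables $m_{i_k,j_k}$ and $m_{i_l,j_l}$, and is exactly the character handled by the $n=2$ computation of Lemma \ref{lemma: temp 002}; hence it is trivial. Multiplying over all $k<l$ gives $\delta_B^{-1/2}\delta_\mathcal{N}\equiv 1$ on $M_\mathcal{N}$, as required. The main obstacle is the second step: carefully verifying that the surviving off-diagonal entries of $x_\mathcal{N}H'x_\mathcal{N}^{-1}\cap B$ partition cleanly into the four-row blocks $\{i_k,j_k,i_l,j_l\}$. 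This requires tracking how the unmixed row structure of $w_\mathcal{N}$ interacts with the pairwise mixing in $a_\mathcal{N}$, and distinguishing the three relative orderings of $(i_k,j_k,i_l,j_l)$ illustrated in Example \ref{example: GL4 computation}.
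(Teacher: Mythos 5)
Your proposal takes essentially the same route as the paper: reduce to the three $n=2$ cases verified in Lemma \ref{lemma: temp 002} by factoring both $\delta_B^{-1/2}$ and $\delta_\mathcal{N}$ over pairs of pairs from the partition in Proposition \ref{prop: temp 004}. The one place where the paper is more explicit and you are more impressionistic is your ``second step'' — verifying that the unipotent radical $U_\mathcal{N}$ decomposes into four-row blocks. The paper handles this by introducing the subgroups $\GL_{\{(i_1,j_1),(i_2,j_2)\}}$, observing that $a_\mathcal{N}$ normalises each such block, and then using the intersection identity
\[
B_{\{(i_1,j_1),(i_2,j_2)\}} \cap x_\mathcal{N}H'x_\mathcal{N}^{-1} = B_{\{(i_1,j_1),(i_2,j_2)\}} \cap x_\mathcal{N}H'_{\{(i_1,j_1),(i_2,j_2)\}}x_\mathcal{N}^{-1}
\]
to conclude the modular character factors. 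Your Lie-algebra phrasing ($\mathrm{Lie}(U_\mathcal{N})=\bigoplus_{k<l}\mathfrak{u}_\mathcal{N}^{(k,l)}$, each piece $M_\mathcal{N}$-stable) is the same content expressed infinitesimally; the two descriptions trade off against one another, the paper's being cleaner to justify via the normalisation of $\GL_4$-blocks by $a_\mathcal{N}$, and yours being more directly tied to how $\delta_\mathcal{N}$ is actually computed. You correctly flag the block-partition verification as the crux, and it is worth noting that the vanishing of intra-pair off-diagonal entries follows because the constraint $m_{i_k,j_k}=m_{j_k,i_k}$ forces the corresponding $2\times 2$ block of $x_\mathcal{N}H'x_\mathcal{N}^{-1}$ to be scalar — a point neither write-up spells out, but which both tacitly rely on.
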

     \begin{proof}
         For $\GL_4(\C)$ (i.e. $n=2$), all possible sequences $\mathcal{N}$ satisfying $n_{t,t}^+ = n_{t,t}^- = 0$ (for all $1\leq t \leq 2n$)  are listed in Example \ref{example: GL4 computation}. Hence for $n=2$, this proposition is just Lemma \ref{lemma: temp 002}, which can be checked in a case by case fashion. Now we aim to prove the proposition for general $n$ based on the case $n=2$.

         By the last statement in Proposition \ref{prop: temp 004}, we can group the $2n$ diagonal entries of $m_{\mathcal{N}}$ in exact $n$ pairs in the form $(m_{i,j}, m_{j,i})$ with $m_{i,j}=m_{j,i}$ ($i<j$).
         Write $m'_{i,j}$ for the diagonal entry of $H_1'$ (when $i<j$) or the diagonal entry of $H_2'$ (when $i>j$) as in Lemma \ref{lemma: temp 003}.
         %From the description right above \eqref{eq: 9997}, we know that in each pair, one entry comes form a diagonal entry of $H_1'$, while the other one comes from $H_2'$.
         Now consider two distinct such pairs $(m_{i_1,j_1}, m_{j_1,i_1})$ and $(m_{i_2,j_2}, m_{j_2,i_2})$ ($i_1<j_1, i_2<j_2$).
         Accordingly, we write $m'_{i_1,j_1}, m'_{i_2,j_2}$ (resp. $m'_{j_1,i_1}, m'_{j_2,i_2}$) for the corresponding diagonal entries of $H_1'$ (resp. $H_2'$).
         These four diagonal entries $m_{i_1,j_1}, m_{i_2,j_2}, m_{j_1,i_1}, m_{j_2,i_2}$ are located in four different rows and columns. The intersection of these four rows and columns (i.e. a total of sixteen entries) determine a subgroup (denoted by $\GL_{\{(i_1,j_1),(i_2,j_2)\}}$) of $G$ isomorphic to $\GL_4(\C)$.
         %By deleting all rows and columns where these four diagonal entries are not located in, we get a their original positions in $H'$ 'determines' a subgroup (denoted by $\GL_{(i_ij_1,i_2j_2)}$) of $G$ isomorphic to $\GL_4(\C)$.
         For example, when $G = \GL_6(\C)$ and the four diagonal entries $m_{i_1,j_1} = m_{j_1,i_1} = a, m_{i_2,j_2} = m_{j_2,i_2} = d$ are given in the following:
     $$\left(
\begin{array}{cccccc}
 a & 0 & 0 & 0 & 0 & 0 \\
 0 & \star & 0 & 0 & 0 & 0 \\
 0 & 0 & \star & 0 & 0 & 0 \\
 0 & 0 & 0 & a & 0 & 0 \\
 0 & 0 & 0 & 0 & d & 0 \\
 0 & 0 & 0 & 0 & 0 & d \\
\end{array}
\right),$$
     then the subgroup $\GL_{\{(i_1,j_1),(i_2,j_2)\}}$ consists of all invertible matrices of the form
     $$\left(
\begin{array}{cccccc}
 \star & 0 & 0 & \star & \star & \star \\
 0 & 1 & 0 & 0 & 0 & 0 \\
 0 & 0 & 1 & 0 & 0 & 0 \\
 \star & 0 & 0 & \star & \star & \star \\
 \star & 0 & 0 & \star & \star & \star \\
 \star & 0 & 0 & \star & \star & \star \\
\end{array}
\right).$$
     Set $$B_{\{(i_1,j_1),(i_2,j_2)\}}:= B\cap \GL_{\{(i_1,j_1),(i_2,j_2)\}}$$ to be the standard upper Borel subgroup of $\GL_{\{(i_1,j_1),(i_2,j_2)\}}$. The diagonal entries $m_{i_1,j_1}', m_{i_2,j_2}'$, $m_{j_1,i_1}', m_{j_2,i_2}'$ are the nontrivial diagonal entries of the subgroup $w_\mathcal{N}^{-1}\GL_{\{(i_1,j_1),(i_2,j_2)\}}w_{\mathcal{N}} \simeq \GL_4(\BC)$. We see from Lemma \ref{lemma: temp 003} that $H_{{\{(i_1,j_1),(i_2,j_2)\}}}':= H'\cap w_\mathcal{N}^{-1}\GL_{\{(i_1,j_1),(i_2,j_2)\}}w_{\mathcal{N}}$ is a symmetric subgroup of $w_\mathcal{N}^{-1}\GL_{\{(i_1,j_1),(i_2,j_2)\}}w_{\mathcal{N}}$ isomorphic to $\GL_2(\BC)\times \GL_2(\BC).$ Moreover, since it is easy to see (from the definition of $a_\mathcal{N}$) that $a_\mathcal{N}$ normalizes $\GL_{\{(i_1,j_1),(i_2,j_2)\}}$, we get
     \begin{equation}\label{eq: temp 2800}
         \begin{aligned}x_\mathcal{N}H_{{\{(i_1,j_1),(i_2,j_2)\}}}'x_\mathcal{N}^{-1} &= x_\mathcal{N}H'x_\mathcal{N}^{-1}\cap a_\mathcal{N}\GL_{\{(i_1,j_1),(i_2,j_2)\}}a_{\mathcal{N}}^{-1}\\
          &= x_\mathcal{N}H'x_\mathcal{N}^{-1}\cap \GL_{\{(i_1,j_1),(i_2,j_2)\}}.\end{aligned}\end{equation}
     Write $\delta_{\{(i_1,j_1),(i_2,j_2)\}}$ for the modular character of $B_{\{(i_1,j_1),(i_2,j_2)\}} \cap x_\mathcal{N}H_{{\{(i_1,j_1),(i_2,j_2)\}}}'x_\mathcal{N}^{-1}$. We note from \eqref{eq: temp 2800} that
     \begin{equation}\label{eq: temp 2900}
          B_{\{(i_1,j_1),(i_2,j_2)\}} \cap x_\mathcal{N}H_{{\{(i_1,j_1),(i_2,j_2)\}}}'x_\mathcal{N}^{-1} = B_{\{(i_1,j_1),(i_2,j_2)\}} \cap x_\mathcal{N}H'x_\mathcal{N}^{-1}.
     \end{equation}
     This implies that the character $\delta_\mathcal{N}$, being the modular character of $H_\mathcal{N}' = B\cap x_\mathcal{N}H'x_\mathcal{N}^{-1}$, is equal to the product of all $\delta_{\{(i_1,j_1),(i_2,j_2)\}}$ with $(i_1,j_1), (i_2,j_2)$ running over all distinct pairs of the pairs in the final statement of Proposition \ref{prop: temp 004}.

     Now we can use Lemma \ref{lemma: temp 002} to compute the contribution of the two pairs $(m_{i_1,j_1},m_{j_1,i_1})$ and $(m_{i_2,j_2}, m_{j_2,i_2})$ in $\delta_B^{-\frac{1}{2}}(m_\mathcal{N})\delta_\mathcal{N}(m_\mathcal{N})$. Indeed, there are exactly three cases on the positions of these four diagonal entries in the matrix $m_\mathcal{N}$, namely:
     \begin{enumerate}
       \item $(\cdots,m_{i_1,j_1}=a,\cdots,m_{j_1,i_1}=a,\cdots,m_{i_2,j_2}=d,\cdots,m_{j_2,i_2}=d,\cdots)$;
       \item $(\cdots,m_{i_1,j_1}=a,\cdots,m_{i_2,j_2}=d,\cdots,m_{j_1,i_1}=a,\cdots,m_{j_2,i_2}=d,\cdots)$;
       \item $(\cdots,m_{i_1,j_1}=a,\cdots,m_{i_2,j_2}=d,\cdots,m_{j_2,i_2}=d,\cdots,m_{j_1,i_1}=a,\cdots)$.
     \end{enumerate}
     They correspond to the three small cases in Example \ref{example: GL4 computation}.
     %For simplicity, let us write $\xi_{(i_1,j_1,i_2j_2)}$ for the character corresponding to $\delta_B^{-\frac{1}{2}}\delta_\mathcal{N}$ in $\GL_{(i_1j_1,i_2j_2)}$ and write $A_{(i_1j_1,i_2j_2)}$ for the $4\times 4$ matrix obtained by deleting all other diagonal entries of $m_\mathcal{N}$, i.e. correspondingly $A_{(i_1j_1,i_2j_2)}$ is equal to one of the following
     %\begin{enumerate}
       %\item $(m_{i_1j_1},m_{i_1j_1},m_{i_2j_2},m_{i_2j_2})$;
       %\item $(m_{i_1j_1},m_{i_2j_2},m_{i_1j_1},m_{i_2j_2})$;
       %\item $(m_{i_1j_1},m_{i_2j_2},m_{i_2j_2},m_{i_1j_1})$.
     %\end{enumerate}
     Then according to Lemma \ref{lemma: temp 002}, we have
     %$$\xi_{(i_1,j_1,i_2j_2)}(A_{(i_1j_1,i_2j_2)})=1.$$
     %Consequently,
     \begin{equation}\label{eq: 9990}
          \left(\delta_B^{-\frac{1}{2}}\delta_\mathcal{N}\right)(m_\mathcal{N}) = \prod_{\text{all pairs }\{(i_1,j_1),(i_2,j_2)\}} \left(\delta_{B_{\{(i_1,j_1),(i_2,j_2)\}}}^{-\frac{1}{2}}\delta_{\{(i_1,j_1),(i_2,j_2)\}}\right)(m_\mathcal{N}) = 1.
     \end{equation}
     \end{proof}

     \begin{proof}[Proof of Theorem \ref{thm: generic repns with Shalika models}, \textbf{(2)$\Rightarrow$(3)}]

     By Proposition \ref{prop: temp 004}, there exists a sequence $\mathcal{N}$ satisfying the two properties listed in Proposition \ref{prop: temp 004}. By Proposition \ref{prop: temp 005}, the representation $\tilde\Lambda_{\mathcal{N},k}$ is equal to the restriction of $\Lambda_{x_\mathcal{N},k}$ on $M_\mathcal{N}$. Hence, the character $\xi$ of $M_\mathcal{N}$ in \eqref{eq: 1052} is in fact a one-dimensional subrepresentation of $\Lambda_{x_\mathcal{N},k}\lvert_{M_\mathcal{N}}$. Now, by Lemma \ref{lemma: temp 007}, we can write $\xi$ as in \eqref{eq: 1054} with $u_{i,j}$ satisfying \eqref{eq: 9999}. Thus, \eqref{eq: 1052} can be rewritten as
     \begin{equation}\label{eq: temp 3000}
                 \sigma(m_\mathcal{N})\cdot \prod_{1\leq i<j\leq 2n}\frac{1}{\omega( m_{i,j})} = \prod_{1\leq i<j\leq 2n}m_{i,j}^{u_{i,j}}
            \end{equation}
     with $\sum_{i,j} u_{i,j} = 0$.
     Let us write the character $\omega$ of $\BC^\times$ as
     \begin{equation}\label{eq: 1019}\omega(z) = \chi_{L'}(\frac{z}{\abs{z}})\abs{z}_\C^u,\end{equation}
     with $L'\in\BZ$ and $u\in\BC$. Then by taking the absolute value on both sides of \eqref{eq: temp 3000}, we see that

     %Combining all of the above, we finally show that according to \eqref{eq: 1053}, the restriction $\tilde{\Lambda}_k\lvert_{M_\mathcal{N}}$ is isomorphic to $\Lambda_k^{-1}\lvert_{M_\mathcal{N}}$. Hence every irreducible subrepresentation of $\tilde{\Lambda}_k\lvert_{M_\mathcal{N}}$ ,in particular the character $\xi$ in \eqref{eq: 1015}, is a character of the form \eqref{eq: 1054} satisfying \eqref{eq: 9999}. Because in \eqref{eq: shalika model classification sigma}, the exponents of each factor of $\sigma$ are all equal to $\frac{m}{2}$, the exponent of $\omega$ must also be $\frac{m}{2}$. Thus we observe that the character in the LHS of \eqref{eq: 1015} has zero exponents. Therefore, we conclude that if we write the character $\xi$ in \eqref{eq: 1015} in the form \eqref{eq: 1054}, then all $u_{i,j}$ are equal to zero. Hence we can rewrite \eqref{eq: 1015} as
     \begin{equation}\label{eq: 9980}
         \prod_{1\leq i\ne j \leq 2n} \abs{m_{i,j}}_\BC^{\frac{m}{2}}\cdot \prod_{1\leq i<j\leq 2n} \abs{m_{i,j}}_\BC^{-u} = \prod_{1\leq i<j\leq 2n}\abs{m_{i,j}}_\BC^{u_{i,j}}.
     \end{equation}
     By using $m_{i,j} = m_{j,i}$ and comparing the exponents, we see that if $m_{i,j}$ appears in $m_\mathcal{N}$, then $m-u = u_{i,j}$. The property that $\sum_{i,j} u_{i,j} = 0$ directly implies that $m = u$, and hence further implies that all $u_{i,j} = 0$. Thus, \eqref{eq: temp 3000} can be rewritten as
     \begin{equation}\label{eq: temp 3100}
                 \sigma(m_\mathcal{N})\cdot \prod_{1\leq i<j\leq 2n}\frac{1}{\omega( m_{i,j})} = 1.
            \end{equation}

     By the last statement of Proposition \eqref{prop: temp 004}, the diagonal entries of $m_\mathcal{N}$ can be regrouped in exactly $n$ pairs
     $(m_{i_k,j_k}, m_{j_k,i_k})$ $(i_k<j_k, k=1,2,\cdots, n)$ with $m_{i_k,j_k} = m_{j_k,i_k}$. Now by \eqref{eq: shalika model classification sigma} and \eqref{eq: 1019} (with $u=m$), we can rewrite \eqref{eq: temp 3100} as
     \begin{equation}\label{eq: temp 3200}
                 \prod_{k=1}^{n} \chi_{l_{(i_k,j_k)}}(m_{i_k,j_k})\cdot \chi_{l_{(j_k,i_k)}}(m_{j_k,i_k}) = \prod_{k=1}^n \chi_{L'}(m_{i_k,j_k}),
            \end{equation}
     where the set $\set{l_{(i_k,j_k)}, l_{(j_k,i_k)}}{k=1,2,\cdots,n} = \{l_1,l_2,\cdots,l_{2n} \textrm{ given in }\eqref{eq: shalika model classification sigma}\}$. By using $m_{i_k,j_k} = m_{j_k,i_k}$ and comparing the characters on both sides of \eqref{eq: temp 3200}, we see that $l_{i_k,j_k}+l_{j_k,i_k} = L'$ for all $k=1,2,\cdots,n$. In other words, the $2n$ integers $l_1 > l_2 \cdots >l_{2n}$ in a strictly decreasing order can be partitioned into exactly $n$ pairs such that the sum of the two integers in each pair are all equal. The only possible partition is to pair the largest integer $l_1$ with the smallest integer $l_{2n}$, then pair the second largest integer $l_2$ with the second smallest integer $l_{2n-1}$, etc.. Therefore, $l_j+l_{2n+1-j} = L'$ for all $j=1,2,\cdots,n$. Now by \eqref{eq: 1020}, $L'$ is an even integer. The theorem now follows once we write $L'$ as $2L$ with $L\in\BZ$.
     \end{proof}

\subsection{Construct Another Linear Model}\label{Subsection: Another Linear Model}

%%%%%%%%%%%%%%%%%%%%%%%%%%%%%%%%%%%

In this Subsection, we will retain the notations in Subsection \ref{subsection: classification}. By Theorem \ref{thm: generic repns with Shalika models}, if $\pi$ is an irreducible essentially tempered cohomological representation of $G$ with a nonzero Shalika model, then $\pi$ must be isomorphic to the normalized parabolically induced representation
    \begin{equation}\label{eq: 1022}
       \pi \simeq \Ind_{P}^{G}\, \sigma_1\otimes\sigma_2\otimes\cdots\otimes\sigma,
    \end{equation}
where $P$ is the standard parabolic subgroup of $G$ associated with the partition $[2^n]$, and all $\sigma_j$ are principal series of $\GL_2(\C)$ sharing the same central character $\omega$. By Theorem \ref{thm: generic repns with Shalika models}, given any character $\chi$ of $F^\times$, $\pi$ must have a nonzero twisted linear model. Such a twisted linear functional can be given by the local Friedberg-Jacquet integral. The goal of this Subsection is to construct a twisted linear functional for $\pi$ without using the Shalika model of $\pi$. The method we use is just the complex version of \cite[Section 3]{J-L-T}, as we only deal with cohomological representations of $\GL_{2n}(\R)$ there.

Given a generic representation $\sigma$ of $\GL_2(\C)$ with a central character $\omega$ and a nonzero Whittaker model $\mathcal{W}(\sigma,\psi)$, the archimedean local integral
\begin{equation}\label{eq: local int GL(2,R)}
       \lambda_{s,\chi,\sigma}(v) := \int_{\C^\times} W_v(\mtrtwo{a}{}{}{1})\abs{a}_\C^{s-\frac{1}{2}}\chi(a) d^\times a.
\end{equation}
has a meromorphic continuation to the whole complex plane. It is a holomorphic multiple of the $L$-function $L(s,\sigma\otimes \chi)$. Whenever $s = s_0$ is not a pole of the $L-$function $L(s, \sigma\otimes \chi)$, $\lambda_{s_0,\chi,\sigma}$ defines a nonzero continuous linear functional in
\begin{equation*}
     \text{Hom}_{\GL_1(\C)\times \GL_1(\C)}(\sigma, \abs{\quad}_\C^{\frac{1}{2}-s_0}\chi^{-1}\otimes\abs{\quad}_\C^{s_0-\frac{1}{2}}\omega\chi).
\end{equation*}

Let $\pi$ be the representation of $G$ as in \eqref{eq: 1022}, with each $\sigma_j$ being a principal series of $\GL_2(\C)$ with a central character $\omega$. We fix a character $\chi$ of $\C^\times$ and write $\lambda_{s,j} := \lambda_{s,\chi,\sigma_j}$ for the nonzero continuous linear functional in
\begin{equation*}
     \text{Hom}_{\GL_1(\C)\times \GL_1(\C)}(\sigma_j, \abs{\quad}_\C^{\frac{1}{2}-s}\chi^{-1}\otimes\abs{\quad}_\C^{s-\frac{1}{2}}\omega\chi)
\end{equation*}
defined via the local integral $\eqref{eq: local int GL(2,R)}$. The following construction of the twisted linear functional for $\pi$ is almost a word by word repetition of \cite[Subsection 3.2]{J-L-T}. For convenience of the reader, we will outline the key steps.

    Let $w$ be the Weyl element as in \eqref{eq: def of w} and $P=MU$ be the Levi decomposition of $P$. Define $H' = \text{Ad}(w)H$ as in \eqref{eq: 1013}. Then $H'$ is also isomorphic to $\GL_n(\C)\times \GL_n(\C)$. The standard maximal unipotent subgroup $N$ of $H$ is mapped onto the standard maximal unipotent subgroup $N'$ of $H'$. We notice that $N'$ is a subgroup of $U$. Let us write
    \begin{equation*}
        N = N_1\times N_2.
    \end{equation*}
    Then $N_1$ and $N_2$ are both standard maximal unipotent subgroup of $\GL_n(\C)$. To simplify notation, we set
\begin{equation}\label{def: chi1 and chi2}
    \begin{aligned}
    \chi_{1,s}(a) &= \abs{a}^{\frac{1}{2}-s}_\C\chi^{-1}(a),\\
    \chi_{2,s}(a) &= \abs{a}^{s-\frac{1}{2}}_\C\omega(a)\chi(a).
    \end{aligned}
\end{equation}

    Take $\varphi\in V_\pi$, which is a smooth function on $G$ taking value in $$V_{\sigma_1}\hat{\otimes}\cdots\hat{\otimes} V_{\sigma_n},$$ the projective tensor product of Fr\'{e}chet spaces $V_{\sigma_j}$. The continuous linear functional $$\bigotimes_{i=1}^n \lambda_{s,i}$$ on the algebraic tensor space $$V_{\sigma_1}\otimes\cdots\otimes V_{\sigma_n}$$ has a continuous linear extension to the projective tensor product space. By abuse of notation, we will still use $$\bigotimes_{i=1}^n \lambda_{s,i}$$ for such an extension.  We consider a function on $H$ defined by
\begin{equation}\label{Eq: def of F on H}
    F_s(h; \varphi) = \langle \bigotimes_{i=1}^n \lambda_{s,i}, \varphi(wh)\rangle.
\end{equation}
Then $F_s(h; \varphi) = F_s(\textrm{I}_{2n}; \pi(h)\varphi)$. Since $N'$ is a subgroup of $U$, it is clear that for $n_1\in N_1$, $n_2\in N_2$,
\begin{equation*}
    \begin{aligned}
    F_s(\mtrtwo{n_1}{}{}{n_2}h; \varphi)= F_s(h; \varphi).
    \end{aligned}
\end{equation*}
By the equivariance of $\lambda_{s,i}$, it is easy to check the equivariance of $F_s(h; \varphi)$ on the torus:
\begin{equation}\label{Eq: equiv on torus}
    \begin{aligned}
    F_s(\mtrtwo{a_1}{}{}{a_2}h; \varphi)
    = \delta_P^{\frac{1}{2}}(w\mtrtwo{a_1}{}{}{a_2}w^{-1})\cdot\chi_{1,s}(\det a_1)\cdot\chi_{2,s}(\det a_2)F_s(h; \varphi),
    \end{aligned}
\end{equation}
where $a_1$ and $a_2$ are diagonal matrices in $\GL_n(\C)$. Let $B_1$ and $B_2$ be the standard Borel subgroups of $\GL_n(\C)$ having unipotent radicals $N_1$, $N_2$ resp.. Then
\begin{equation}
    \delta_P^{\frac{1}{2}}(w\mtrtwo{a_1}{}{}{a_2}w^{-1}) = \delta_{B_1}(a_1)\delta_{B_2}(a_2).
\end{equation}
Thus, $F_s(h;\varphi)$ satisfies a $B_1\times B_2$ equivariant property:
\begin{equation}\label{Eq: equivariant property of F}
     F_s(\mtrtwo{b_1}{}{}{b_2}h; \varphi) = \delta_{B_1}(b_1)\delta_{B_2}(b_2)\chi_{1,s}(\det b_1)\cdot\chi_{2,s}(\det b_2)\cdot F_s(h; \varphi),
\end{equation}
where $b_1, b_2$ live in $B_1, B_2$ respectively. By \cite[Lemma 5.1.1.4]{WarnerHarmonicAnalysis}, there exists a test function $f(h; \varphi)\in C_c^\infty(H)$ such that
\begin{equation}\label{eq: 1025}
   \begin{aligned}
       F_s(h; \varphi) = \int_{B_1\times B_2} f(\mtrtwo{b_1}{}{}{b_2}h; \varphi)\chi_{1,s}^{-1}(\det b_1)\chi_{2,s}^{-1}(\det b_2) d_lb_1d_lb_2.
   \end{aligned}
\end{equation}
We define a linear functional $\Lambda_{s,\chi}$ on $V_\pi$ according to the formula:
   \begin{equation}\label{eq: def of linear functional}
        \begin{aligned}
        \Lambda_{s,\chi}(\varphi):=&\int_H f(\mtrtwo{h_1}{}{}{h_2};\varphi)\chi_{1,s}^{-1}(\det h_1)\chi_{2,s}^{-1}(\det h_2) dh_1dh_2\\
        %=&\int_{K\cap H}\int_{B_1\times B_2} f(\mtrtwo{b_1}{}{}{b_2}\mtrtwo{k_1}{}{}{k_2};\varphi)\chi_{1,s}^{-1}(\det (b_1k_1))\chi_{2,s}^{-1}(\det (b_2k_2))d_lb_1d_lb_2dk_1dk_2\\
         %\stackrel{\eqref{eq: 1025}}{=}
         =&\int_{K\cap H} F_s(\mtrtwo{k_1}{}{}{k_2}; \varphi)\chi_{1,s}^{-1}(\det k_1)\chi_{2,s}^{-1}(\det k_2)) dk_1dk_2,
        \end{aligned}
   \end{equation}
where $K$ is the maximal compact subgroup of $G$. The linear functional $\Lambda_{s,\chi}$ is well defined, i.e. it is a convergent integral and independent on the choice of $f(h; \varphi)$. It also defines an element in $\text{Hom}_H(\pi, \chi_{1,s}(\det)\otimes\chi_{2,s}(\det))$.
In terms of $\varphi$, $\Lambda_{s,\chi}$ can be written as:
\begin{equation}\label{eq: new def H inv linear functional}
   \Lambda_{s,\chi}(\varphi) = \int_{K\cap H} \langle \bigotimes_{i=1}^n \lambda_{s,i}, \varphi(w\mtrtwo{k_1}{}{}{k_2}\rangle \chi_{1,s}^{-1}(\det k_1)\chi_{2,s}^{-1}(\det k_2)) dk_1dk_2
                    = \langle \bigotimes_{i=1}^n \lambda_{s,i}, \tilde{\varphi}(w) \rangle,
\end{equation}
where $\tilde{\varphi}$ is obtained by averaging $\varphi$ against the character $\chi_{1,s}^{-1}(\det k_1)\chi_{2,s}^{-1}(\det k_2)$ over the compact group $K\cap H$. In particular, if $\varphi$ satisfies the right $K\cap H$-equivariant property:
\begin{equation}\label{eq: right equivariant property}
    \varphi(g\mtrtwo{k_1}{}{}{k_2}) = \chi_{1,s}(\det k_1)\cdot\chi_{2,s}(\det k_2)\cdot \varphi(g),
\end{equation}
or equivalently (since the absolute value of the determinant is $1$ on $K$)
\begin{equation}\label{eq: right equivariant property simplified}
    \varphi(g\mtrtwo{k_1}{}{}{k_2}) = \chi^{-1}(\det k_1)\cdot (\omega\chi)(\det k_2))\cdot \varphi(g),
\end{equation}
then
\begin{equation}\label{eq: 020}
    \Lambda_{s,\chi}(\varphi) = \langle \bigotimes_{i=1}^n \lambda_{s,i}, \tilde{\varphi}(w) \rangle =\langle \bigotimes_{i=1}^n \lambda_{s,i}, \varphi(w) \rangle.
\end{equation}

\begin{prop}\label{Cor: analytic property of Lambda}
    For every $\varphi\in V_\pi$, $\Lambda_{s,\chi}(\varphi)$ defined by \eqref{eq: new def H inv linear functional} has a meromorphic continuation in $s$ to the whole complex plane. It is a holomorphic multiple of $L(s,\pi\otimes\chi)$. $\Lambda_{s,\chi}$ defines a nonzero element in
    \begin{equation*}
              \Hom_H(\pi, \chi_{1,s}(\det)\otimes\chi_{2,s}(\det)) = \Hom_{H}(\pi, \abs{\det}_\C^{-s+\frac{1}{2}}\chi^{-1}(\det)\otimes\abs{\det}_\C^{s-\frac{1}{2}}(\chi\omega)(\det))
    \end{equation*}
    whenever $s$ is not a pole of the $L$-function $L(s,\pi\otimes\chi)$. In particular, one can choose $\varphi$ such that $\Lambda_{s,\chi}(\varphi) = L(s,\pi\otimes\chi)$
\end{prop}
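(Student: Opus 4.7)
The plan is to reduce the proposition to the analogous (and classical) statements for $\GL_2(\C)$. By \eqref{eq: new def H inv linear functional} we may rewrite
\begin{equation*}
  \Lambda_{s,\chi}(\varphi) = \bigl\langle \bigotimes_{i=1}^n \lambda_{s,i},\; \tilde{\varphi}(w) \bigr\rangle,
\end{equation*}
where $\tilde{\varphi}(w) \in V_{\sigma_1}\hat{\otimes}\cdots\hat{\otimes} V_{\sigma_n}$ depends continuously on $\varphi\in V_\pi$ and is independent of $s$. So everything is pushed onto the factors $\lambda_{s,i}$ and the choice of $\varphi$.

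For the meromorphic continuation and the divisibility by $L(s,\pi\otimes\chi)$, I would quote the $\GL_2(\C)$ theory of local zeta integrals: each $\lambda_{s,i}$ converges absolutely for $\Re(s)\gg 0$, admits meromorphic continuation to $s\in\BC$, and is a holomorphic multiple of $L(s,\sigma_i\otimes\chi)$; moreover there exists $v_i\in V_{\sigma_i}$ realizing $\lambda_{s,i}(v_i)=L(s,\sigma_i\otimes\chi)$ as an identity of meromorphic functions. The tensor-product functional $\bigotimes_i \lambda_{s,i}$ extends continuously to the projective tensor product and inherits these analytic properties, with $L(s,\pi\otimes\chi)=\prod_i L(s,\sigma_i\otimes\chi)$ thanks to the parabolic-induction description \eqref{eq: 1022} and the inductivity of complex local $L$-factors. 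Combined with the smoothness of $\varphi\mapsto \tilde\varphi(w)$, this yields the first two claims.

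The $H$-equivariance of $\Lambda_{s,\chi}$ lying in $\Hom_H(\pi,\chi_{1,s}(\det)\otimes\chi_{2,s}(\det))$ is already implicit in the construction: the transformation law \eqref{Eq: equivariant property of F} together with an elementary change of variables in \eqref{eq: def of linear functional} shows that $\Lambda_{s,\chi}$ satisfies the desired $(H,\chi_{1,s}\otimes\chi_{2,s})$-equivariance, and the identification of Hom spaces follows from the definition \eqref{def: chi1 and chi2} of $\chi_{1,s}$ and $\chi_{2,s}$. Thus $\Lambda_{s,\chi}$ defines the claimed element of the Hom space at every $s$ which is neither a pole of $L(s,\pi\otimes\chi)$ nor forces the vector $\tilde\varphi(w)$ to be annihilated; non-vanishing is guaranteed by exhibiting one good $\varphi$, which is the content of the last assertion.

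The main obstacle is the final statement: constructing a single $\varphi\in V_\pi$ with $\Lambda_{s,\chi}(\varphi)=L(s,\pi\otimes\chi)$. My plan is to first pick, for each $i$, a smooth vector $v_i\in V_{\sigma_i}$ with $\lambda_{s,i}(v_i)=L(s,\sigma_i\otimes\chi)$, and then to produce $\varphi\in V_\pi$ satisfying the right $(K\cap H)$-equivariance \eqref{eq: right equivariant property} and the equality $\varphi(w)=v_1\otimes\cdots\otimes v_n$, so that \eqref{eq: 020} gives $\Lambda_{s,\chi}(\varphi)=\prod_i \lambda_{s,i}(v_i)=L(s,\pi\otimes\chi)$. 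The compatibility issue is that $\varphi$ must simultaneously respect the left $P$-equivariance of $\Ind_P^G(\sigma_1\otimes\cdots\otimes\sigma_n)$ and the right $(K\cap H)$-equivariance by $\chi_{1,s}\otimes\chi_{2,s}$; these two conditions only interact on the compact group $P\cap w(K\cap H)w^{-1}$, where the matching of central characters of $\sigma_i$ with $\chi_{1,s}\chi_{2,s}$ (this is where the Shalika-model assumption and $\omega^n=\omega_\pi$ are used) forces consistency. Once this is verified, $\varphi$ can be built by a partition-of-unity argument on $P\backslash G/(K\cap H)$ supported in a small neighborhood of the double coset $Pw(K\cap H)$; all remaining assertions are routine bookkeeping. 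This is the step I expect to absorb most of the work, essentially mirroring \cite[Subsection 3.2]{J-L-T}.
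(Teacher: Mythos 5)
Your treatment of the meromorphic continuation, the divisibility by $L(s,\pi\otimes\chi)=\prod_jL(s,\sigma_j\otimes\chi)$, and the $H$-equivariance matches the paper's argument and is fine. The problem is in the last and, as you correctly identify, the hardest step: producing a $\varphi$ with $\Lambda_{s,\chi}(\varphi)=L(s,\pi\otimes\chi)$.

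Your plan is to force the right $K\cap H$-equivariance \eqref{eq: right equivariant property} on $\varphi$ so that \eqref{eq: 020} reads $\Lambda_{s,\chi}(\varphi)=\langle\bigotimes_i\lambda_{s,i},\varphi(w)\rangle$, and then to build $\varphi$ by a partition of unity on $P\backslash G/(K\cap H)$ supported near the double coset $Pw(K\cap H)$. This step does not go through as stated. A smooth $\varphi$ that is left $\delta_P^{1/2}\sigma$-equivariant and right $(K\cap H)$-equivariant by a never-vanishing character has its vanishing/non-vanishing locus a union of full $P$-$(K\cap H)$ double cosets, and for it to be ``supported near $Pw(K\cap H)$'' while being nonzero at $w$ you would need that double coset to be open in $G$. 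It is not: already $\dim_\R P+\dim_\R(K\cap H)=(8n^2-4n(n-1))+2n^2=6n^2+4n<8n^2=\dim_\R G$ once $n\ge 3$ (and even when the dimensions match, the stabilizer, which contains the center, drops the orbit dimension below $\dim G$). So there is no bump function on $P\backslash G/(K\cap H)$ concentrated near $Pw(K\cap H)$, and the ``routine bookkeeping'' is in fact a genuine obstruction. Your appeal to ``matching of central characters forcing consistency'' addresses the pointwise compatibility at $w$ but does not resolve the global construction problem.

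The paper sidesteps this entirely: it does \emph{not} impose the right $K\cap H$-equivariance. Instead it first uses the Bruhat decomposition to rewrite $\Lambda_{s,\chi}(\varphi)$ as an integral over the opposite unipotent subgroup $\bar N'\subset \bar U$ (equations \eqref{eq: 1024}--\eqref{eq: 1026}), valid whenever $\pi(w)\varphi|_{\bar N'}$ has compact support. It then takes $\phi\in C_c^\infty(\bar U)$ with $\int_{\bar N'}\phi=1$, sets $\tilde\phi(u)=\phi(u)\,v_1\otimes\cdots\otimes v_n$ with $\lambda_{s,j}(v_j)=L(s,\sigma_j\otimes\chi)$, extends $\tilde\phi$ to $G$ by left $P$-equivariance (legitimate because $P\bar U$ is the open Bruhat cell), and puts $\varphi=\pi(w^{-1})\tilde\phi$. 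This uses the openness of $P\bar U$ rather than of $Pw(K\cap H)$, which is exactly what makes the construction work. If you want to keep your plan, you should replace the $K\cap H$-equivariance/partition-of-unity step with this Bruhat-cell construction and compute via \eqref{eq: 1026} rather than \eqref{eq: 020}.
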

\begin{proof}
   Since each $\lambda_{s,j}(v)$ has a meromorphic continuation to the whole complex plane and $\lambda_{s,j}(v)$ is a holomorphic multiple of $L(s,\sigma_j\otimes\chi)$, in view of \eqref{eq: 020}, we conclude that for any $\varphi$ in $V_\pi$, $\Lambda_{s,\chi}(\varphi)$ has a meromorphic continuation to the whole complex plane and is a holomorphic multiple of
   $$\prod_{j=1}^n L(s,\sigma_j\otimes\chi) = L(s,\pi\otimes\chi).$$
   It defines an element in \begin{equation*}
              \Hom_H(\pi, \chi_{1,s}(\det)\otimes\chi_{2,s}(\det)) = \Hom_{H}(\pi, \abs{\det}_\C^{-s+\frac{1}{2}}\chi^{-1}(\det)\otimes\abs{\det}_\C^{s-\frac{1}{2}}(\chi\omega)(\det)).
    \end{equation*}
   We only need to prove the non-vanishing result.

   We write $\bar{N}_1,\bar{N}_2$ for lower unipotent subgroups of $\GL_n(\C)$ opposite to $N_1,N_2$. Then by Bruhat decomposition, we find that
   \begin{equation}\label{eq: 1024}
       \begin{aligned}
       \Lambda_{s,\chi}(\varphi) = &\int_{\bar{N}_1\times\bar{N}_2}\int_{B_1\times B_2} f(\mtrtwo{b_1\bar{n}_1}{}{}{b_2\bar{n}_2}; \varphi)\chi_{1,s}^{-1}(\det b_1\bar{n}_1)\chi_{2,s}^{-1}(\det b_2\bar{n}_2)) d_lb_1d_lb_2d\bar{n}_1d\bar{n}_2\\
       = &\int_{\bar{N}_1\times\bar{N}_2}F_s(\mtrtwo{\bar{n}_1}{}{}{\bar{n}_2}; \varphi)d\bar{n}_1d\bar{n}_2
       \end{aligned}
   \end{equation}
   whenever the restriction of $F_s(h; \varphi)$ on $\bar{N}_1\times\bar{N}_2$ has compact support. The adjoint action $\mathrm{Ad}(w)$ maps $\bar{N}_1\times\bar{N}_2$ into a unipotent subgroup (denoted by $\bar{N}'$) of $\bar{U}$, the lower unipotent subgroup opposite to $U$. Thus, if we assume that the restriction of $\pi(w)\varphi$ on $\bar{N}'$ has compact support, then
   \begin{equation}\label{eq: 1026}
       \begin{aligned}
       \Lambda_{s,\chi}(\varphi) = &\int_{\bar{N}_1\times\bar{N}_2}\langle \bigotimes_{i=1}^n \lambda_{s,i}, \varphi(w\mtrtwo{\bar{n}_1}{}{}{\bar{n}_2})\rangle d\bar{n}_1d\bar{n}_2\\
                                 = & \int_{\bar{N'}} \langle \bigotimes_{i=1}^n \lambda_{s,i}, (\pi(w)\varphi)(\bar{n'})\rangle d\bar{n'}. \end{aligned}\end{equation}
    Each $\lambda_{s,j}$ is a nonzero continuous linear functional on $V_{\sigma_j}$ and we can choose a smooth  vector $v_j\in V_{\sigma_j}$ such that $\lambda_{s,j}(v_j) = L(s,\sigma_j\otimes\chi)$. We take any positive test function $\phi\in C^\infty_c(\bar{U})$ such that its restriction to $\bar{N}'$ is in $C_c^\infty(\bar{N}')$ and $\phi$ satisfies
   $$\int_{\bar{N}'} \phi(\bar{n}')d\bar{n}' = 1.$$ Define
   $$\tilde{\phi}(u) = \phi(u)v_1\otimes v_2\otimes \cdots \otimes v_n.$$ We can extend $\tilde{\phi}$ to a smooth function on $G$ in $V_\pi$ according to the left $P$-equivariance of $\pi$. Finally, we set $\varphi = \pi(w^{-1})\tilde{\phi}$. With such a choice of $\varphi$, from \eqref{eq: 1026}, we get
   \begin{equation*}\begin{aligned}\Lambda_{s,\chi}(\varphi) &= \int_{\bar{N'}} \langle \bigotimes_{i=1}^n \lambda_{s,i}, (\pi(w)\varphi)(\bar{n'})\rangle d\bar{n'}\\
   &= \int_{\bar{N}'}\langle \bigotimes_{i=1}^n \lambda_{s,i}, \phi(\bar{n}')v_1\otimes v_2\otimes\cdots\otimes v_{n}\rangle d\bar{n}'\\
   &= \int_{\bar{N}'} \phi(\bar{n}')d\bar{n}' \cdot \prod_{i=1}^n \lambda_{s,i}(v_i) = L(s,\pi\otimes\chi).\end{aligned}\end{equation*}
   We are done.
\end{proof}

%%%%%%%%%%%%%%%%%%%%%%%%%%%%%%%%%%

\section{Cohomological Vectors in the Induced Representation}\label{Section: Cohomological vector realization}

%%%%%%%%%%%%%%%%%%%%%%%%%%%%

   The goal of this Section is to explicitly construct a cohomological vector of $\pi$ with the desired property for Theorem \ref{thm-main}. To this end, we work with the classical invariant theory and representations of compact Lie groups. Let us begin with some reductions and outline our strategy on the construction of the function in the minimal $K$-type. Throughout this Section, we will use the bold letter $\boldsymbol{i}$ for $\sqrt{-1}$.
   \subsection{Some reductions}\label{subsection: some reductions}
    First, we briefly recall the notations in Subsection \ref{sec-CRGL2n} and Subsection \ref{sec-SMLM}. Let $B$ be the standard Borel subgroup of $G=\GL_{2n}(\C)$, and $T$ the split torus of $G$ contained in $B$. Then $T$ is a product of $2n$ copies of $\C^\times$, and $T\cap K$ is a product of $2n$ copies of $\mathrm{U}_1$. For any integer $N$, we write $\chi_{N}$ the unitary character of $\C^\times$ sending $z$ to $(\frac{z}{\abs{z}})^{N}$. By abuse of notations, we also regard $\chi_N$ as a character of $\mathrm{U}_1$ by restriction.

   Let $\pi$ be the irreducible generic cohomological representation given in \eqref{Eq: pi parabolic induction parameter} with $(l_1,\cdots,l_{2n})$ being a sequence of integers in the decreasing order that satisfies \eqref{eq: 1001}. We set $N_j = l_j-L$ (the integer $L$ is defined in \eqref{eq: 1001}). Then $(N_1,N_2,\cdots, N_{2n})$ is a sequence of integers in the strictly decreasing order satisfying $N_j+N_{2n+1-j}=0$ ($j=1,2,\cdots,n$). Automatically, this implies that $N_1>N_n>\cdots>N_n>0$. The representation $\pi$ can be rewritten as  \begin{equation}\label{eq: 1021}\begin{aligned}\pi&=\Ind_{B}^{G}\abs{\quad}_{\C}^\frac{m}{2}\chi_{N_1+L}\otimes\cdots\otimes\abs{\quad}_{\C}^\frac{m}{2}\chi_{N_{2n}+L}\\
                         &=\Ind_{B}^{G}\abs{\quad}_{\C}^\frac{m}{2}\chi_{N_1+L}\otimes\cdots\otimes\abs{\quad}_{\C}^\frac{m}{2}\chi_{N_{n}+L}\otimes\abs{\quad}_{\C}^\frac{m}{2}\chi_{-N_{n}+L}\otimes\cdots\otimes\abs{\quad}_{\C}^\frac{m}{2}\chi_{-N_{1}+L},\end{aligned}\end{equation}
   which is isomorphic to
   \begin{equation}\label{Eq: permutation of induced representation}
       \begin{aligned}
   \Ind_{B}^{G}&\abs{\quad}_{\C}^\frac{m}{2}\chi_{N_1+L}\otimes\abs{\quad}_{\C}^\frac{m}{2}\chi_{-N_1+L}\otimes\abs{\quad}_{\C}^\frac{m}{2}\chi_{N_2+L}\otimes\abs{\quad}_{\C}^\frac{m}{2}\chi_{-N_2+L}\otimes\cdots\otimes\\
   &\abs{\quad}_{\C}^\frac{m}{2}\chi_{N_{n}+L}\otimes\abs{\quad}_{\C}^\frac{m}{2}\chi_{-N_{n}+L}.
       \end{aligned}
   \end{equation}
   We aim to construct a suitable function in the minimal $K$-type of $\pi$, i.e. a cohomological vector, using the model \eqref{Eq: permutation of induced representation}.

    As before, $K = \RU_{2n}$ is the standard maximal compact subgroup of $G$. Let $\tau$ be the minimal $K$-type of $\pi$. By Frobenius reciprocity law, the $K$-types occuring in $\pi$ are just the ones whose restriction to $T$ contains the character $\chi_{N_1+L}\otimes\cdots\otimes\chi_{N_{n}+L}\otimes\chi_{-N_{n}+L}\otimes\cdots\otimes\chi_{-N_{1}+L}$. Hence the highest weight of the minimal $K$-type $\tau$ is
    \begin{equation}\label{eq: def of weight}
    \begin{aligned}\Lambda:=&(N_1+L,N_2+L,\cdots,N_{2n}+L) \\
                             =& (N_1+L,N_2+L,\cdots,N_n+L,-N_n+L,\cdots, -N_{1}+L).\end{aligned}
    \end{equation}
    \cite[Proposition 8.1]{V2} confirms that $\tau$ is also the minimal $K$-type of the representation \begin{equation}\label{eq: pi K}\pi_K := \Ind_{T\cap K}^{K}\chi_{N_1+L}\otimes\chi_{-N_1+L}\otimes\chi_{N_2+L}\otimes\chi_{-N_2+L}\otimes\cdots\otimes\chi_{N_{n}+L}\otimes\chi_{-N_{n}+L}.\end{equation}

    For simplicity, we set
    \begin{equation}\label{eq: def of N}
    \vec{N}:=(N_1+L,-N_{1}+L,\cdots,N_n+L,-N_n+L)
    \end{equation} and $$\chi_{\vec{N}}:=\chi_{N_1+L}\otimes\chi_{-N_{1}+L}\otimes\cdots\otimes\chi_{N_n+L}\otimes\chi_{-N_n+L}.$$ Then every function in $\pi_K=\Ind_{T\cap K}^{K}\chi_{\vec{N}}$ belongs to the space $C^{\infty}(K)$ of smooth functions on $K$ satisfying the left equivariant property given by $\chi_{\vec{N}}$. Inspired by this, we may regard $C^{\infty}(K)$ as a left $T\cap K$- and right $K$-module under the left and right regular actions. As a $(T\cap K)\times K$-module, $C^\infty(K)_{\textrm{fin}}$, the space of $K$-finite vectors of $C^\infty(K)$, is completely reducible and decomposed into a direct sum:
    \begin{equation}
        \begin{aligned}
        C^\infty(K)_{\textrm{fin}} &= \bigoplus_{(\chi,\eta)\in \widehat{T\cap K}\times \widehat{K}} m_{\chi,\eta}\chi\otimes\eta
                          &= \bigoplus_{\chi\in \widehat{T\cap K}} \big(\bigoplus_{\eta\in \widehat{K}} m_{\chi,\eta}\chi\otimes \eta\big).
        \end{aligned}
    \end{equation}
    %\begin{equation*}
        %C^\infty(K) = \widehat{\bigoplus}_{(\chi,\eta)\in \widehat{T\cap K}\times \widehat{K}} m_{\chi,\eta}\chi\otimes\eta,
    %\end{equation*}
    where as usual, $\widehat K$ and $\widehat{T\cap K}$ stand for the unitary dual of the corresponding compact group, $m_{\chi,\eta}$ is the multiplicity of $(T\cap K)\times K$-submodule $\chi\otimes\eta$ occurring in $C^\infty(K)_{\textrm{fin}}$.
    %The space of $K$-finite vectors of $C^\infty(K)$ is

    Thus $\bigoplus_{\eta\in \widehat{K}} m_{\chi_{\vec{N}},\eta}\chi_{\vec{N}}\otimes \eta$ is the space of $K$-finite vectors of $\pi_K$ and the minimal $K$-type of $\pi_K$ is an irreducible summand in this space with highest weight $(-\vec{N})\times\Lambda$, i.e.
    \begin{equation}\label{eq: highest weight of bi-module}
       \begin{aligned}
    &(-N_1-L,N_{1}-L,\cdots,-N_n-L,N_n-L)\\
    \times &(N_1+L,N_2+L,\cdots,N_n+L,-N_n+L,\cdots,-N_{1}+L).\end{aligned}
    \end{equation}
    Here the left irreducible $T\cap K$-module is given by the tensor product of $2n$ characters on $\textrm{U}_1$, and we write its highest weight as the form $-\vec{N}$.

    Let $\Mat_{2n}$ be the complex vector space of $2n\times 2n$ complex matrices and $\Mat_{2n}^\R$ be its realification (i.e. $\Mat_{2n}^\R$ is a $8n^2$-dimensional real vector space). Equip $C^{\infty}(\Mat_{2n}^{\R})$ a left $T\cap K$- and right $K$-module structures by the left and right regular actions. Then the restriction map $$\iota: C^{\infty}(\Mat_{2n}^{\R})\rightarrow C^\infty (K)$$ that sends a smooth function $f$ to $f|_{K}$ becomes a $T\cap K\times K$-module homomorphism. To explicitly construct a cohomological vector in the minimal $K$-type $\tau$ of $\pi_K$, we only need to produce a polynomial $F_{\vec{N},\chi_{-l}\otimes\chi_{l+2L}}$ in $C^{\infty}(\Mat_{2n}^{\R})_{\textrm{fin}}=\C[\Mat_{2n}^{\R}]$ such that
     \begin{enumerate}
      \item its restriction $\iota(F_{\vec{N},\chi_{-l}\otimes\chi_{l+2L}})$ lies in a $T\cap K\times K$-submodule of $C^{\infty}(K)_{\textrm{fin}}$ with highest weight $(-\vec{N})\times\Lambda$(see \eqref{eq: highest weight of bi-module});
      \item its restriction $\iota(F_{\vec{N},\chi_{-l}\otimes\chi_{l+2L}})$ satisfies the right $K\cap H$-equivariant property \eqref{eq: right equivariant property simplified}.
         \end{enumerate}
    \begin{rk}
       Though not needed below, we remark that one can in fact prove that the restriction map $\iota$ is surjective by using purely topological argument (for instance by using tubular neighbourhood theorem). This provides a conceptual understanding how our method works.
    \end{rk}
    \subsection{Strategy of the construction}
    As is known to all, every highest weight representation of a connected compact Lie group can be realized as the Cartan component of a tensor product of fundamental representations. Following the principle of this realization, we state our construction as follows.

    {\bf(i)} First, we consider the case that the right $K\cap H$-equivariance \eqref{eq: right equivariant property simplified} is given by the trivial characters $\chi = \omega = id$. In this situation, $l=L=0$. To emphasize this special case, we write $\Lambda_0$ and $\vec N_0$ for the corresponding $\Lambda$ and $\vec N$ in \eqref{eq: def of weight} and \eqref{eq: def of N}, that is,
    \begin{equation*}
    \begin{aligned}
    \Lambda_0=&(N_1,\cdots,N_n,-N_n,\cdots,-N_1)\\
    =&\sum_{j=1}^{n-1}(N_j-N_{j+1})(\underbrace{1,\cdots,1}_{j},0,\cdots,0,\underbrace{-1,\cdots,-1}_{j})+N_n(\underbrace{1,\cdots,1}_{n},\underbrace{-1,\cdots,-1}_{n})
    \end{aligned}
    \end{equation*} and
    \begin{equation*}
    \begin{aligned}
    \vec{N}_0=&(N_1,-N_1,\cdots,N_n,-N_n)\\
    =&\sum_{j=1}^{n-1}(N_j-N_{j+1})(\underbrace{1,-1,\cdots,1,-1}_{j\ \textrm{pairs}},0,\cdots,0)+N_n(1,-1,\cdots,1,-1).
    \end{aligned}
    \end{equation*}
    From above decompositions, we see that the "building blocks" of $\Lambda_0$ and $\vec{N}_0$ are $\Lambda_j=(\underbrace{1,\cdots,1}_{j},0,\cdots,0,\underbrace{-1,\cdots,-1}_{j})$ and $\vec{N}_j=(\underbrace{1,-1,\cdots,1,-1}_{j\ \textrm{pairs}},0,\cdots,0)$. Thus, we only need to construct some right $K\cap H=\textrm{U}_n\times\textrm{U}_n$-invariant polynomials $F_j$($1\le j\le n$) in $\C[\Mat_{2n}^{\R}]$ such that $\iota(F_j)$ lies in the minimal $K$-type $\tau_j$ (with the highest weight $\Lambda_j$) of the right $K$-module $\pi_j$ of $C^\infty (K)$, where
    \begin{equation}\label{eq: temp pi j}\pi_j:=\Ind^K_{T\cap K} \underbrace{\chi_{1}\otimes\chi_{-1}\otimes\cdots\otimes \chi_{1}\otimes\chi_{-1}}_{\text{j pairs}}\otimes\text{id}\otimes\cdots\otimes\text{id}.\end{equation}

    {\bf(ii)} For each integer $k$, denote by $S_{k}$ the symmetric group that permutes $\{1,\ldots,k\}$. In particular, we further identify $S_{2n}$ to be the group of permutation matrices in which every element permutes the $2n$-rows of a $2n\times 2n$ matrix. For $1\leq k\leq n$, $S_{2k}$ is naturally a subgroup of $S_{2n}$, hence can be identified with a group of permutation matrices as well. For $j=1,2,\cdots,n$, define
    $$S^{(j)}:=\{s\in S_{2j}|s(2i-1)\ \textrm{is an odd number}, s(2i)=s(2i-1)+1, 1\le i\le j\} \simeq S_j.$$
    From the definition of $\pi_j$ \eqref{eq: temp pi j}, we see that $\pi_j$ is a left $S^{(j)}$-module, where
    %Here we identify $S^{(j)}$ with the subgroup of $2n\times 2n$ permutation matrices in which every element permutes these $2n$ rows.
    the left $S^{(j)}$-action is given by $$(s\cdot F)(Z) := F(s^{-1}Z).$$ Moreover, we can obtain the highest weight function in the minimal $K$-type $\tau_j$ of $\pi_j$ as in \cite[Charpter III, Prop.6.1]{KV} and show it is $S^{(j)}$-invariant by a direct matrix computation. Thus any function in $\tau_j$ is $S^{(j)}$-invariant, since the highest weight function will generate all vectors in $\tau_j$ by the $K$-action on the right. Hence we require that every $\iota(F_j)$ which will be constructed in $\textrm{\bf (i)}$ is left $S^{(j)}$-invariant.

    {\bf(iii)} Finally, we consider the case that the characters $\chi=\chi_l$ and $\omega$ which define the right $K\cap H$-equivariance in \eqref{eq: right equivariant property simplified} are non-trivial. In this situation, except for constructing above functions $F_j$, we also need to produce some extra functions $\Delta_{1,\pm}$ and $\Delta_{2,\pm}$ in $\C[\Mat_{2n}^{\R}]$ such that
    \begin{enumerate}
      \item they contribute to the right $K\cap H$-equivariant property;
      \item they lie in some suitable irreducible $K$-modules whose highest weights match with the data $\vec{N}$ and $\Lambda$ combining with the highest weights $\Lambda_j$ of $\tau_j$ and $\vec{N}_j$ for $1\le j\le n$.
         \end{enumerate}
    The most technical part is to show that $\iota(F_j)$ belongs to the minimal $K$-type $\tau_j$ of $\pi_j$. We need to check $\iota(F_j)$ is an eigenfunction for the Casimir operator corresponding the correct eigenvalue. This will occupy the following two Subsections.
    \subsection{Polynomial Representations of Unitary Groups}\label{subsection: Polynomial repns}
    It is well known that the irreducible representations of a connected compact group are parameterized by dominant analytically integral weights (see \cite{Kn1}). The set of dominant analytically integral weights for the compact group $K = \mathrm{U}_{2n}$ is given by
    \begin{equation}\label{Eq: highest weight}
       P^{ana}_{++}(\mathrm{U}_{2n}) = \set{\nu= (\nu_1,\nu_2,\cdots,\nu_{2n})}{\nu_1\geq\dots\geq\nu_{2n}, \nu_i\in \mathbb{Z}, i=1,2,\cdots,2n}.
    \end{equation}

    Every matrix $Z\in \Mat_{2n}$ can be written as $Z = X+\boldsymbol{i}Y$, where $X$ and $Y$ are real matrices. We consider a smooth representation of $K$ on the polynomial space $$\begin{aligned}\C[\Mat_{2n}^{\R}] :=&\set{f}{Z = X+\boldsymbol{i}Y, X=(x_{i,j})_{1\le i,j\le 2n}, Y=(y_{i,j})_{1\le i,j\le 2n}\\ &\text{ and $f$ is a polynomial in $x_{ij}, y_{ij}$ with complex coefficients}},\end{aligned}$$
    where $K$ acts by right translation. More precisely, for any matrix $k=A+\boldsymbol{i}B\in K$, $k$ acts via the rule
    \begin{equation}\label{eq: action of K on Q(x,y)}
    k\cdot Q(X,Y):=Q((X,Y)\rho_n(k)),
    \end{equation}
    where $\rho_n$ is the following injective homomorphism
    \begin{equation}\label{eq: injective homorphism}
    \rho_n: \GL_{2n}(\C)\longrightarrow \GL_{4n}(\R),\ \ \ \ \ k=A+\boldsymbol{i}B\mapsto \begin{bmatrix} A & B\\ -B & A\end{bmatrix}.
    \end{equation}
    Every polynomial $f\in \C[\Mat_{2n}^\R]$ is a polynomial in $x_{ij}$ and $y_{ij}$ in prior. Yet for convenience of the future calculations, we will regard $f$ as a polynomial in $z_{ij}$ and $\bar{z}_{ij}$ by a change of variables: $$z_{ij}=x_{ij}+\boldsymbol{i}y_{ij};\quad \bar{z}_{i,j} = x_{ij}-\boldsymbol{i}y_{ij}.$$
    By \eqref{eq: action of K on Q(x,y)}, the action of $K$ on $f(Z, \bar{Z})\in \C[\Mat_{2n}^\R]$ is given by
    \begin{equation}\label{eq: action of K on P}
    (k\cdot f)(Z,\bar{Z})=f(Z\cdot k,\bar{Z}\cdot \bar{k}).
    \end{equation}

    Since the group action \eqref{eq: action of K on P} preserves polynomial degree, as a $K$-module, $\C[\Mat_{2n}^{\R}]$ can be decomposed into $K$-submodules
    \begin{equation*}
        \C[\Mat_{2n}^{\R}] = \bigoplus_{d=0}^{\infty}\C^d[\Mat_{2n}^{\R}],
    \end{equation*}
    where $\C^d[\Mat_{2n}^{\R}]$ is the space of all homogeneous polynomials in $\C[\Mat_{2n}^{\R}]$ with degree $d$. By passing to the level of complexified Lie algebra, we can also regard $\C[\Mat_{2n}^{\R}]$ and $\C^d[\Mat_{2n}^{\R}]$ as a representation of the complexified Lie algebra $\mathfrak{k}^\C = \mathfrak{gl}_{2n}(\C)$.

    For any $1\leq\alpha,\beta\leq 2n$, we write $E_{\alpha\beta}$ for the elementary matrix whose only nonzero entry is located the $\alpha\beta$ entry and is equal to $1$. We choose a basis of $\mathfrak{k}$ as follows,
    \begin{equation}\label{Eq: basis}
       \{\boldsymbol{i}H_1, \ldots, \boldsymbol{i}H_{2n}, E_{\alpha\beta}-E_{\beta\alpha}, \boldsymbol{i}(E_{\alpha\beta}+E_{\beta\alpha})|H_{\gamma}=E_{\gamma\gamma}, 1\le\gamma\le 2n, 1\le\alpha<\beta\le 2n\}.
    \end{equation}
    Then the set $\{H_{\gamma}, E_{\alpha\beta}|1\le\gamma\le 2n, 1\le\alpha\neq\beta\le 2n\}$ forms a basis of $\mathfrak{k}^{\C}$. We fix a Cartan subalgebra $\mathfrak{t}$ of $\mathfrak{k}^{\C}$,
    \begin{equation}
    \mathfrak{t}=\{h=\textrm{diag}(h_1,\cdots,h_{2n})|h_i\in\C, 1\le i\le 2n\}.
    \end{equation}
    Then $\epsilon_{\alpha}-\epsilon_{\beta}$ with $(1\le\alpha\neq\beta\le 2n)$ are all the roots of $\mathfrak{k}^{\C}=\mathfrak{gl}_{2n}(\C)$, where $\epsilon_{\alpha}(h)=h_{\alpha}$. For any $1\leq \alpha\ne\beta\leq 2n$, we choose the root vectors $X_{\epsilon_{\alpha}-\epsilon_{\beta}}=E_{\alpha\beta}$.
    The group action of $K$ in \eqref{eq: action of K on P} induces an action of its Lie algebra $\mathfrak{k}$ on $\C[\Mat_{2n}^{\R}]$ by $$(X.f)(Z,\overline{Z})=\frac{d}{dt}\bigg|_{t=0}f(Z\cdot\exp tX, \overline{Z}\cdot\overline{\exp tX}),\ \ \textrm{for any}\ X\in\mathfrak{k}.$$ By replacing $X$ with the elements in \eqref{Eq: basis}, we get the following differential operators:
     \begin{equation}\label{eq: differential operator of H}
        H_{\gamma} : \sum_{\alpha=1}^{2n} z_{\alpha,\gamma}\frac{\partial}{\partial z_{\alpha,\gamma}}-\bar{z}_{\alpha,\gamma}\frac{\partial}{\partial \bar{z}_{\alpha,\gamma}}, 1\le\gamma\le 2n;
     \end{equation}
     \begin{equation}\label{eq: differential operator of basis}
     \begin{aligned}
        E_{\alpha\beta}-E_{\beta\alpha}: &\sum_{\gamma = 1}^{2n} z_{\gamma,\alpha}\frac{\partial}{\partial z_{\gamma,\beta}}-z_{\gamma,\beta}\frac{\partial}{\partial z_{\gamma,\alpha}}+\bar{z}_{\gamma,\alpha}\frac{\partial}{\partial \bar{z}_{\gamma,\beta}}-\bar{z}_{\gamma,\beta}\frac{\partial}{\partial \bar{z}_{\gamma,\alpha}}, 1\le\alpha<\beta\le 2n;\\
        \boldsymbol{i}(E_{\alpha\beta}+E_{\beta\alpha}): &\boldsymbol{i}\sum_{\gamma = 1}^{2n} z_{\gamma,\alpha}\frac{\partial}{\partial z_{\gamma,\beta}}+z_{\gamma,\beta}\frac{\partial}{\partial z_{\gamma,\alpha}}-\bar{z}_{\gamma,\alpha}\frac{\partial}{\partial \bar{z}_{\gamma,\beta}}-\bar{z}_{\gamma,\beta}\frac{\partial}{\partial \bar{z}_{\gamma,\alpha}}, 1\le\alpha<\beta\le 2n.
     \end{aligned}
     \end{equation}
     Then we have the differential operators
     \begin{equation}\label{eq: differential operator of root vector}
        E_{\alpha\beta} : \sum_{\gamma = 1}^{2n} z_{\gamma,\alpha}\frac{\partial}{\partial z_{\gamma,\beta}}-\bar{z}_{\gamma,\beta}\frac{\partial}{\partial \bar{z}_{\gamma,\alpha}}, 1\le\alpha\neq\beta\le 2n.
     \end{equation}
     By a direct matrix computation, the root vectors $X_{\pm(\epsilon_{\alpha}-\epsilon_{\beta})}$ and $H_{\gamma}$ satisfy the following equation,
    \begin{equation}\label{eq: Lie bracket relation 01}
        X_{\epsilon_\alpha-\epsilon_\beta}X_{-\epsilon_\alpha+\epsilon_\beta}-X_{-\epsilon_\alpha+\epsilon_\beta}X_{\epsilon_\alpha-\epsilon_\beta} = H_{\alpha}-H_{\beta}.
    \end{equation}

    With respect to the trace form $(X,Y):= \mathrm{Tr}(XY)$, the dual basis of $X_{\pm(\epsilon_\alpha-\epsilon_\beta)}, H_{\gamma}$ are given by $X_{\mp(\epsilon_\alpha- \epsilon_\beta)}, H_{\gamma}$. We consider the modified Casimir operator $\Omega$ defined via the trace form:
    \begin{equation}\label{eq: Casimir operator 01}
        \Omega=\sum_{\gamma=1}^{2n} H_{\gamma}^2+\sum_{1\leq \alpha<\beta\leq 2n} X_{\epsilon_\alpha-\epsilon_\beta}X_{-\epsilon_\alpha+\epsilon_\beta}+X_{-\epsilon_\alpha+\epsilon_\beta}X_{\epsilon_\alpha-\epsilon_\beta}.
    \end{equation}
    \begin{rk}
        The standard Casimir operator is defined for semisimple Lie algebra via the Killing form. Yet it is more convenient to work with the trace form here. \cite[Proposition 2.3.3]{Gold} guarantees that $\Omega$ as defined in \eqref{eq: Casimir operator 01} lies inside the center of the universal enveloping algebra of $\mathfrak{gl}_{2n}(\C)$.
    \end{rk}
    The trace form is non-degenerate on $\mathfrak{h}$, hence it induces a non-degenerate bilinear pairing on $\mathfrak{h}^*$, denoted by $<\cdot,\cdot>$.
    Given a highest weight representation $(\tau_\nu,V_\nu)$ of $K$ with highest weight $\nu = (\nu_1,\nu_2,\cdots,\nu_{2n})$. It is well known (for instance see \cite[Lemma 3.3.8]{G-W}) that the Casimir operator $\Omega$ acts on $V_\nu$ by the following scalar
    \begin{equation}\label{eq: casimir eigenvalue}
    \begin{aligned}
        <\nu+\rho_K,\nu+\rho_K>-&<\rho_K, \rho_K>=\\
        \sum_{j=1}^{2n}((\nu+\rho_K)(H_j))^2-\sum_{j=1}^{2n}\rho_K(H_j)^2=&\frac{1}{4}\sum_{j=1}^{2n} (2\nu_j+2n-2j+1)^2-(2n-2j+1)^2,
    \end{aligned}
    \end{equation}
    where $\rho_K$ is the half sum of positive roots of $\mathfrak{k}^\C$, i.e. $\rho_K = (\frac{2n-1}{2},\frac{2n-3}{2},\cdots,-\frac{2n-3}{2},-\frac{2n-1}{2})$. One can also get this scalar by testing the action of the Casimir operator $\Omega$ on the highest weight vector.

    Later in this Section, we need to compute the action of the Casimir operator $\Omega$ on certain polynomials. To simplify our computation in the future, we introduce more notations. In the following, we write $e_{i} (i=1,2,\cdots,n)$ for the standard basis of $\C^{n}$. For each $1\leq j\leq 2n$, we define the following row vectors in $\C^n$:
    \begin{equation}\label{def: uj and vj}
       \begin{aligned}
       u_j :=&(z_{j,1},\cdots,z_{j,n}) = \sum_{i = 1}^n z_{j,i}e_{i};\ \ \ \ &\bar{u}_j :=&(\bar{z}_{j,1},\cdots,\bar{z}_{j,n}) = \sum_{i = 1}^n \bar{z}_{j,i}e_{i};\\
       v_j :=&(z_{j,n+1},\cdots,z_{j,2n}) = \sum_{i = 1}^{n} z_{j,n+i}e_{i};\ \ \ \ &\bar{v}_j :=&(\bar{z}_{j,n+1},\cdots,\bar{z}_{j,2n}) = \sum_{i = 1}^n \bar{z}_{j,n+i}e_{i}.
     \end{aligned}
    \end{equation}
    \begin{lemma}\label{lemma: first order operator on component}
       Let $\delta_{t}^{s}$ be the standard Kronecker delta symbol. Then for all $1\leq i \leq n$, $1\leq j, \gamma \leq 2n$ and $1\le \alpha\neq\beta\le 2n$,
       \begin{equation}\label{Eq: first order operator}
       \begin{aligned}
           H_{\gamma}u_j&=z_{j,\gamma}\sum_{i=1}^n\delta_{\gamma}^{i}e_{i};\ \ \ \ \ \ \ \ &H_{\gamma}\bar{u}_j&=-\bar{z}_{j,\gamma}\sum_{i=1}^n\delta_{\gamma}^{i}e_{i};\\
           H_{\gamma}v_j&=z_{j,\gamma}\sum_{i=1}^n\delta^{n+i}_{\gamma}e_{i};\ \ \ \ \ \ \ \ &H_{\gamma}\bar{v}_j&=-\bar{z}_{j,\gamma}\sum_{i=1}^n\delta^{n+i}_{\gamma}e_{i};\\
           X_{\epsilon_\alpha-\epsilon_\beta}u_j&=z_{j,\alpha}\sum_{i=1}^n\delta^{i}_{\beta}e_{i};\ \ \ \ \ \ \ \ &X_{\epsilon_\alpha-\epsilon_\beta}\bar{u}_j&= -\bar{z}_{j,\beta}\sum_{i=1}^n\delta^{i}_{\alpha}e_{i};\\
            X_{\epsilon_\alpha-\epsilon_\beta}v_j&=z_{j,\alpha}\sum_{i=1}^n\delta^{n+i}_{\beta}e_{i};\ \ \ \ \ \ \ \ &X_{\epsilon_\alpha-\epsilon_\beta}\bar{v}_j&= -\bar{z}_{j,\beta}\sum_{i=1}^n\delta^{n+i}_{\alpha}e_{i}.
       \end{aligned}
       \end{equation}
    \end{lemma}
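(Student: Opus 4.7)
The plan is a direct computation, since both families of operators $H_\gamma$ and $X_{\epsilon_\alpha-\epsilon_\beta}=E_{\alpha\beta}$ are first-order differential operators given by the explicit formulas \eqref{eq: differential operator of H} and \eqref{eq: differential operator of root vector}, and each of $u_j, \bar u_j, v_j, \bar v_j$ is a vector whose entries are single coordinate functions $z_{j,\bullet}$ or $\bar z_{j,\bullet}$. All identities are to be read component-wise in the basis $e_1,\dots,e_n$ of $\C^n$.

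First I would apply $H_\gamma$ to the $i$-th entry $z_{j,i}$ of $u_j$. From \eqref{eq: differential operator of H}, only the summand with $\alpha=j$ in $\sum_\alpha z_{\alpha,\gamma}\partial_{z_{\alpha,\gamma}}$ can contribute, and it contributes only when the differentiated variable $z_{j,\gamma}$ equals $z_{j,i}$, that is when $\gamma=i$; the anti-holomorphic term annihilates $z_{j,i}$. This gives $z_{j,i}\delta_\gamma^i$ in the $i$-th slot, and using $z_{j,i}\delta_\gamma^i=z_{j,\gamma}\delta_\gamma^i$ one assembles $H_\gamma u_j=z_{j,\gamma}\sum_{i=1}^n \delta_\gamma^i e_i$. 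The formula for $H_\gamma\bar u_j$ is obtained identically, with the single sign change coming from the minus sign in front of $\bar z_{\alpha,\gamma}\partial_{\bar z_{\alpha,\gamma}}$ in \eqref{eq: differential operator of H}. The identities for $H_\gamma v_j$ and $H_\gamma\bar v_j$ are exactly the same calculation after relabelling the $i$-th component as $z_{j,n+i}$ or $\bar z_{j,n+i}$; this is what produces $\delta_\gamma^{n+i}$ in place of $\delta_\gamma^i$.

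Next I would handle the root vectors. Applying $X_{\epsilon_\alpha-\epsilon_\beta}$ from \eqref{eq: differential operator of root vector} to the $i$-th component $z_{j,i}$ of $u_j$, only the holomorphic term $\sum_\gamma z_{\gamma,\alpha}\partial_{z_{\gamma,\beta}}$ can contribute (the anti-holomorphic term differentiates $\bar z$ and so kills $z_{j,i}$); within it only $\gamma=j$ and $\beta=i$ give a nonzero summand, yielding $z_{j,\alpha}\delta_\beta^i$. Summing against $e_i$ gives the stated formula. For $\bar u_j$ the situation reverses: now the holomorphic term annihilates $\bar z_{j,i}$ and the only contribution comes from $-\bar z_{\gamma,\beta}\partial_{\bar z_{\gamma,\alpha}}$ with $\gamma=j$ and $\alpha=i$, which produces $-\bar z_{j,\beta}\delta_\alpha^i$; this explains the asymmetry between $\delta_\beta^i$ for $u_j$ and $\delta_\alpha^i$ for $\bar u_j$. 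The identities for $v_j$ and $\bar v_j$ are then obtained from the $u_j, \bar u_j$ computations by the same index shift $i\mapsto n+i$ as before.

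There is no real obstacle here: each of the eight identities is a one-line application of a single first-order derivation to a coordinate function. The only points that warrant care are the bookkeeping between the holomorphic and anti-holomorphic summands (responsible for the sign flip and for the swap between $\delta_\beta^i$ and $\delta_\alpha^i$), and the convention that $\sum_{i=1}^n\delta_\gamma^i e_i$ is the zero vector whenever $\gamma\notin\{1,\dots,n\}$ (and analogously $\sum_{i=1}^n \delta_\gamma^{n+i}e_i=0$ unless $\gamma\in\{n+1,\dots,2n\}$), so that the compact formulas in \eqref{Eq: first order operator} correctly encode the vanishing of $H_\gamma u_j$ and $H_\gamma v_j$ outside the appropriate ranges of $\gamma$.
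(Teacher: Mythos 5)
Your proposal is correct and follows essentially the same route as the paper: a direct, component-wise application of the explicit first-order differential operators \eqref{eq: differential operator of H} and \eqref{eq: differential operator of root vector} to the coordinate entries of $u_j$, $\bar u_j$, $v_j$, $\bar v_j$. The paper likewise verifies the $u_j$ cases explicitly and notes the others follow by the same calculation; your extra remarks on the sign flip and the $\delta^i_\alpha$ versus $\delta^i_\beta$ bookkeeping are accurate and consistent with the paper's computation.
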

    \begin{proof} Here we prove the formulas for $u_j$ only, the others can be verified in the same way.
    \begin{equation}
     \begin{aligned}
    H_{\gamma}u_j=&\big(\sum_{\alpha=1}^{2n} z_{\alpha,\gamma}\frac{\partial}{\partial z_{\alpha,\gamma}}-\bar{z}_{\alpha,\gamma}\frac{\partial}{\partial \bar{z}_{\alpha,\gamma}}\big)(\sum_{i=1}^nz_{j,i}e_i)\\
    =&\big(z_{j,\gamma}\frac{\partial}{\partial z_{j,\gamma}}-\bar{z}_{j,\gamma}\frac{\partial}{\partial \bar{z}_{j,\gamma}}\big)(\sum_{i=1}^nz_{j,i}e_i)\\
    =&z_{j,\gamma}\sum_{i=1}^n\delta^i_{\gamma}e_i=\left\{\begin{aligned}
     &z_{j,\gamma}e_{\gamma}, \ \ &1\le\gamma\le n;\\
     &0,\ \ &\textrm{otherwise}.
    \end{aligned}\right.
      \end{aligned}
    \end{equation}
    \begin{equation}
     \begin{aligned}
    X_{\epsilon_\alpha-\epsilon_\beta}u_j=E_{\alpha\beta}u_j=&\big(\sum_{\gamma=1}^{2n} z_{\gamma,\alpha}\frac{\partial}{\partial z_{\gamma,\beta}}-\bar{z}_{\gamma,\beta}\frac{\partial}{\partial \bar{z}_{\gamma,\alpha}}\big)(\sum_{i=1}^nz_{j,i}e_i)\\
    =&\big(z_{j,\alpha}\frac{\partial}{\partial z_{j,\beta}}-\bar{z}_{j,\beta}\frac{\partial}{\partial \bar{z}_{j,\alpha}}\big)(\sum_{i=1}^nz_{j,i}e_i)\\
    =&z_{j,\alpha}\sum_{i=1}^n\delta^i_{\beta}e_i=\left\{\begin{aligned}
     &z_{j,\alpha}e_{\beta}, \ \ &1\le\beta\le n;\\
     &0,\ \ &\textrm{otherwise}.
    \end{aligned}\right.
      \end{aligned}
    \end{equation}
    \end{proof}
    Similarly, applying above operators once again, we can show the following equations.
    \begin{lemma}\label{Lemma: 2nd order operators}For all $1\leq i \leq n$, $1\leq j, \gamma \leq 2n$ and $1\le \alpha\neq\beta\le 2n$, we have
      \begin{equation}\label{Eq: 2nd order operator}
      \begin{aligned}
      H^2_{\gamma}u_j&=z_{j,\gamma}\sum_{i=1}^n\delta_{\gamma}^{i}e_{i};\ \ \ \ \ \ \ \ &H^2_{\gamma}\bar{u}_j&=\bar{z}_{j,\gamma}\sum_{i=1}^n\delta_{\gamma}^{i}e_{i};\\
      H^2_{\gamma}v_j&=z_{j,\gamma}\sum_{i=1}^n\delta^{n+i}_{\gamma}e_{i};\ \ \ \ \ \ \ \ &H^2_{\gamma}\bar{v}_j&=\bar{z}_{j,\gamma}\sum_{i=1}^n\delta^{n+i}_{\gamma}e_{i};\\
      X_{\epsilon_\alpha-\epsilon_\beta}X_{\epsilon_\beta-\epsilon_\alpha}u_j&=z_{j,\alpha}\sum_{i=1}^n\delta^{i}_{\alpha}e_{i};\ \ \ \ \ \ \ \ &X_{\epsilon_\alpha-\epsilon_\beta}X_{\epsilon_\beta-\epsilon_\alpha}\bar{u}_j&= \bar{z}_{j,\beta}\sum_{i=1}^n\delta^{i}_{\beta}e_{i};\\
      X_{\epsilon_\alpha-\epsilon_\beta}X_{\epsilon_\beta-\epsilon_\alpha}v_j&=z_{j,\alpha}\sum_{i=1}^n\delta^{n+i}_{\alpha}e_{i};\ \ \ \ \ \ \ \ &X_{\epsilon_\alpha-\epsilon_\beta}X_{\epsilon_\beta-\epsilon_\alpha}\bar{v}_j&= \bar{z}_{j,\beta}\sum_{i=1}^n\delta^{n+i}_{\beta}e_{i}.
      \end{aligned}
      \end{equation}
    \end{lemma}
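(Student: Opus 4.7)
The plan is to obtain Lemma \ref{Lemma: 2nd order operators} directly from Lemma \ref{lemma: first order operator on component} by applying the relevant first-order operator once more. Since the outputs in Lemma \ref{lemma: first order operator on component} are single-term expressions of the form (monomial) times (basis vector $e_i$), the second application of the differential operators in \eqref{eq: differential operator of H}--\eqref{eq: differential operator of root vector} reduces to a single Leibniz-type step, so no messy cross-terms appear. I would organize the argument as four short calculations, paralleling the four displays in the lemma.

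For the Cartan operators, starting from $H_\gamma u_j=z_{j,\gamma}\sum_{i=1}^n\delta^i_\gamma e_i$, I would apply $H_\gamma$ to the coefficient $z_{j,\gamma}$ using \eqref{eq: differential operator of H}; this returns $z_{j,\gamma}$ itself, hence $H_\gamma^2 u_j=z_{j,\gamma}\sum_i\delta^i_\gamma e_i$. For $\bar u_j$ the starting expression has a sign, and $H_\gamma\bar z_{j,\gamma}=-\bar z_{j,\gamma}$ introduces a second sign, so the two cancel and produce $\bar z_{j,\gamma}\sum_i\delta^i_\gamma e_i$. The $v_j$, $\bar v_j$ identities are obtained word-for-word with $\delta^i_\gamma$ replaced by $\delta^{n+i}_\gamma$. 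For the root vector operators, I would start from $X_{\epsilon_\beta-\epsilon_\alpha}u_j=z_{j,\beta}\sum_i\delta^i_\alpha e_i$ (Lemma \ref{lemma: first order operator on component} with $\alpha,\beta$ swapped), and apply $X_{\epsilon_\alpha-\epsilon_\beta}=E_{\alpha\beta}$ via \eqref{eq: differential operator of root vector}. Only the $\gamma=j$ summand in $\sum_\gamma z_{\gamma,\alpha}\partial/\partial z_{\gamma,\beta}$ contributes, yielding $z_{j,\alpha}\sum_i\delta^i_\alpha e_i$. The $\bar u_j$ case is identical: starting from $-\bar z_{j,\alpha}\sum_i\delta^i_\beta e_i$, the anti-holomorphic part of $E_{\alpha\beta}$ sends $\bar z_{j,\alpha}$ to $-\bar z_{j,\beta}$, and the two minus signs combine to give $\bar z_{j,\beta}\sum_i\delta^i_\beta e_i$. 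The $v_j,\bar v_j$ cases again follow by the index shift $i\mapsto n+i$.

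There is genuinely no obstacle; the lemma is pure bookkeeping on top of Lemma \ref{lemma: first order operator on component}. The only place where care is needed is to track which of the two halves of $E_{\alpha\beta}$ (the holomorphic piece $z_{\gamma,\alpha}\partial/\partial z_{\gamma,\beta}$ versus the anti-holomorphic piece $-\bar z_{\gamma,\beta}\partial/\partial\bar z_{\gamma,\alpha}$) acts on the output of the first operator, because the holomorphic vectors $u_j,v_j$ are annihilated by the anti-holomorphic piece and vice versa; this is exactly why the output indices shift from $\beta$ to $\alpha$ (respectively from $\alpha$ to $\beta$) in the holomorphic (respectively anti-holomorphic) case, producing the asymmetry $\delta^i_\alpha$ vs.\ $\delta^i_\beta$ seen in the statement.
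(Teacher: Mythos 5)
Your proposal is correct and follows essentially the same route as the paper: apply Lemma \ref{lemma: first order operator on component} to reduce to a single remaining term, then hit it with the operator once more, observing that only the $\gamma=j$ (holomorphic or anti-holomorphic) piece of the differential operator survives. The sign-cancellation bookkeeping you describe for $\bar u_j$ and the $\delta^i_\alpha$ versus $\delta^i_\beta$ asymmetry match the paper's computation exactly.
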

    \begin{proof}We still show the formulas for $u_j$ only, the others can be proved by similar computation. By applying Lemma \ref{lemma: first order operator on component}, we have
     \begin{equation}
     \begin{aligned}
    H^2_{\gamma}u_j=H_{\gamma}(z_{j,\gamma}\sum_{i=1}^n\delta^i_{\gamma}e_i)=&\big(\sum_{\alpha=1}^{2n} z_{\alpha,\gamma}\frac{\partial}{\partial z_{\alpha,\gamma}}-\bar{z}_{\alpha,\gamma}\frac{\partial}{\partial \bar{z}_{\alpha,\gamma}}\big)(z_{j,\gamma}\sum_{i=1}^n\delta^i_{\gamma}e_i)\\
    =&\big(z_{j,\gamma}\frac{\partial}{\partial z_{j,\gamma}}-\bar{z}_{j,\gamma}\frac{\partial}{\partial \bar{z}_{j,\gamma}}\big)(z_{j,\gamma}\sum_{i=1}^n\delta^i_{\gamma}e_i)=z_{j,\gamma}\sum_{i=1}^n\delta^i_{\gamma}e_i.
      \end{aligned}
    \end{equation}
    \begin{equation}
     \begin{aligned}
    X_{\epsilon_\alpha-\epsilon_\beta}X_{\epsilon_\beta-\epsilon_\alpha}u_j=&X_{\epsilon_\alpha-\epsilon_\beta}(z_{j,\beta}\sum_{i=1}^n\delta^i_{\alpha}e_i)\\
    =&\big(\sum_{\gamma=1}^{2n} z_{\gamma,\alpha}\frac{\partial}{\partial z_{\gamma,\beta}}-\bar{z}_{\gamma,\beta}\frac{\partial}{\partial \bar{z}_{\gamma,\alpha}}\big)(z_{j,\beta}\sum_{i=1}^n\delta^i_{\alpha}e_i)\\
    =&\big(z_{j,\alpha}\frac{\partial}{\partial z_{j,\beta}}-\bar{z}_{j,\beta}\frac{\partial}{\partial \bar{z}_{j,\alpha}}\big)(z_{j,\beta}\sum_{i=1}^n\delta^i_{\alpha}e_i)=z_{j,\alpha}\sum_{i=1}^n\delta^i_{\alpha}e_i.
      \end{aligned}
    \end{equation}
    \end{proof}
    Let $\langle\cdot,\cdot\rangle_{c}$ be the standard hermitian form on $\C^n$, i.e. $\langle u_j, u_l\rangle_{c}=u_j\cdot \bar{u}_l^T=\sum_{i=1}^nz_{j,i}\bar{z}_{l,i}$, for any $u_j,u_l\in\C^n$. Then $\langle u_j, u_l\rangle_{c}\in\C[\Mat_{2n}^\R]$. Applying above differential operators to $\langle u_j, u_l\rangle_{c}$, we get
    \begin{lemma}\label{Lemma: operators act on pairing}For all $1\leq i \leq n$, $1\leq j, l, \gamma\leq 2n$ and $1\le \alpha\neq\beta\le 2n$, we have
    \begin{equation}\label{Eq: operators act on pairing}
      \begin{aligned}
           H_{\gamma}\langle u_j, u_l\rangle_{c}&=H^2_{\gamma}\langle u_j, u_l\rangle_{c}=0;\\
           X_{\epsilon_\alpha-\epsilon_\beta}\langle u_j, u_l\rangle_{c}&=z_{j,\alpha}\sum_{i=1}^n\bar{z}_{l,i}\delta^i_{\beta}-\bar{z}_{l,\beta}\sum_{i=1}^nz_{j,i}\delta^i_{\alpha};\\
           X_{\epsilon_\alpha-\epsilon_\beta}X_{\epsilon_\beta-\epsilon_\alpha}\langle u_j, u_l\rangle_{c}&=\left\{\begin{aligned}
              &z_{j,\alpha}\bar{z}_{l,\alpha}-z_{j,\beta}\bar{z}_{l,\beta},\ \ &\textrm{ if }1\le\alpha\le n<\beta\le 2n;\\
              &z_{j,\beta}\bar{z}_{l,\beta}-z_{j,\alpha}\bar{z}_{l,\alpha}, \ \ &\textrm{ if }1\le\beta\le n<\alpha\le 2n;\\
              &0,\ \ &\textrm{ if }1\le\alpha\neq\beta\le n, \textrm{or}\ n<\alpha\neq\beta\le 2n.
              \end{aligned}\right.
       \end{aligned}
    \end{equation}
    \end{lemma}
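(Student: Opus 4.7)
The proof is a direct computation using the explicit formulas for the differential operators $H_\gamma$ and $X_{\epsilon_\alpha - \epsilon_\beta} = E_{\alpha\beta}$ given in \eqref{eq: differential operator of H} and \eqref{eq: differential operator of root vector}, applied termwise to the expansion $\langle u_j, u_l \rangle_c = \sum_{i=1}^{n} z_{j,i}\bar{z}_{l,i}$. The plan is to handle the three assertions in order, since the second-order computations reduce to iterations of the first-order ones.

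For the $H_\gamma$ statement, I would simply observe that $H_\gamma$ acts diagonally on each monomial: from the definition, $H_\gamma z_{j,i} = \delta_i^\gamma z_{j,i}$ and $H_\gamma \bar{z}_{l,i} = -\delta_i^\gamma \bar{z}_{l,i}$, so by the Leibniz rule $H_\gamma(z_{j,i}\bar{z}_{l,i}) = (\delta_i^\gamma - \delta_i^\gamma)z_{j,i}\bar{z}_{l,i} = 0$. Summing over $i$ proves $H_\gamma \langle u_j, u_l\rangle_c = 0$, and $H_\gamma^2$ then vanishes trivially.

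For the first-order action of $X_{\epsilon_\alpha-\epsilon_\beta}$, I would apply the operator $\sum_\gamma z_{\gamma,\alpha}\partial/\partial z_{\gamma,\beta} - \bar{z}_{\gamma,\beta}\partial/\partial \bar{z}_{\gamma,\alpha}$ to each term $z_{j,i}\bar{z}_{l,i}$: only the summand $\gamma = j$ contributes to the holomorphic part (giving $z_{j,\alpha}\delta_i^\beta \bar{z}_{l,i}$), and only $\gamma = l$ contributes to the antiholomorphic part (giving $-\bar{z}_{l,\beta}\delta_i^\alpha z_{j,i}$). Summing over $i$ yields exactly the formula claimed in \eqref{Eq: operators act on pairing}.

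For the second-order formula, I would iterate: applying $X_{\epsilon_\beta-\epsilon_\alpha}$ first (using the formula just proved with $\alpha,\beta$ swapped) gives $z_{j,\beta}\sum_i\bar{z}_{l,i}\delta_i^\alpha - \bar{z}_{l,\alpha}\sum_i z_{j,i}\delta_i^\beta$, and the sums collapse according to whether the free index lies in $\{1,\ldots,n\}$. This naturally splits into four cases on the positions of $\alpha$ and $\beta$ relative to $n$: if $\alpha \le n < \beta$ only the first sum survives, yielding $z_{j,\beta}\bar{z}_{l,\alpha}$, onto which a second application of $E_{\alpha\beta}$ produces $z_{j,\alpha}\bar{z}_{l,\alpha} - z_{j,\beta}\bar{z}_{l,\beta}$; if $\beta \le n < \alpha$ only the second sum survives, giving $-z_{j,\beta}\bar{z}_{l,\alpha}$, on which $E_{\alpha\beta}$ yields $z_{j,\beta}\bar{z}_{l,\beta} - z_{j,\alpha}\bar{z}_{l,\alpha}$; and if $\alpha,\beta$ are on the same side of $n$, either both Kronecker deltas vanish or both contribute and cancel, so the intermediate expression is already zero. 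No step is a genuine obstacle; the only care required is bookkeeping the range $1 \le i \le n$ in the sum defining $\langle u_j, u_l\rangle_c$, which is precisely what produces the piecewise structure in case (3).
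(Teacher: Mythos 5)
Your proof is correct and follows essentially the same route as the paper's: a direct termwise computation using the explicit differential operator formulas. The one minor organizational difference is in the second-order case: the paper expands $X_{\epsilon_\alpha-\epsilon_\beta}X_{\epsilon_\beta-\epsilon_\alpha}\langle u_j,u_l\rangle_c$ via the full second-order Leibniz rule on the components $u_j,\bar u_l$, calling on both Lemma \ref{lemma: first order operator on component} and Lemma \ref{Lemma: 2nd order operators}, whereas you iterate the first-order scalar formula (already established) to the intermediate result and then case-split on whether $\alpha,\beta$ lie at or below $n$; this avoids the auxiliary second-order lemma and is slightly more economical, but the underlying calculation is the same.
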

    \begin{proof}
    By Lemma \ref{lemma: first order operator on component} and Lemma \ref{Lemma: 2nd order operators},
    \begin{equation*}
       \begin{aligned}
           H_{\gamma}\langle u_j, u_l\rangle_{c}=&(H_{\gamma}u_j)\cdot\bar{u}_l^T+u_j\cdot(H_{\gamma}\bar{u}_l)^T=z_{j,\gamma}\sum_{i=1}^n\delta^{i}_{\gamma}\bar{z}_{l,i}-\bar{z}_{l,\gamma}\sum_{i=1}^n\delta^i_{\gamma}z_{j,i}\\
           =&z_{j,\gamma}\bar{z}_{l,\gamma}-\bar{z}_{l,\gamma}z_{j,\gamma}=0;\\
           H^2_{\gamma}\langle u_j, u_l\rangle_{c}=&(H^2_{\gamma}u_j)\cdot\bar{u}_l^T+2(H_{\gamma}u_j)\cdot(H_{\gamma}\bar{u}_l)^T+u_j\cdot(H^2_{\gamma}\bar{u}_l)^T\\
           =&z_{j,\gamma}\sum_{i=1}^n\delta^{i}_{\gamma}\bar{z}_{l,i}-2z_{j,\gamma}\bar{z}_{l,\gamma}\sum_{i=1}^n\delta^{i}_{\gamma}+\bar{z}_{l,\gamma}\sum_{i=1}^n\delta^i_{\gamma}z_{j,i}=0;\\
           X_{\epsilon_\alpha-\epsilon_\beta}\langle u_j, u_l\rangle_{c}=&(X_{\epsilon_\alpha-\epsilon_\beta}u_j)\cdot \bar{u}_l^T+u_j\cdot(X_{\epsilon_\alpha-\epsilon_\beta}\bar{u}_l)^T=z_{j,\alpha}\sum_{i=1}^n\bar{z}_{l,i}\delta^i_{\beta}-\bar{z}_{l,\beta}\sum_{i=1}^nz_{j,i}\delta^i_{\alpha};\\
           =&\left\{\begin{aligned}
              -&z_{j,\alpha}\bar{z}_{l,\beta},\ \ &\textrm{ if }1\le\alpha\le n<\beta\le 2n;\\
              &z_{j,\alpha}\bar{z}_{l,\beta}, \ \ &\textrm{ if }1\le\beta\le n<\alpha\le 2n;\\
              &0,\ \ &\textrm{ if }1\le\alpha\neq\beta\le n, \textrm{or}\ n<\alpha\neq\beta\le 2n.
              \end{aligned}\right.
        \end{aligned}
       \end{equation*}
       \begin{equation*}
       \begin{aligned}
           &X_{\epsilon_\alpha-\epsilon_\beta}X_{\epsilon_\beta-\epsilon_\alpha}\langle u_j, u_l\rangle_{c}\\
           =&(X_{\epsilon_\alpha-\epsilon_\beta}X_{\epsilon_\beta-\epsilon_\alpha}u_j)\cdot\bar{u}_l^T+u_j\cdot(X_{\epsilon_\alpha-\epsilon_\beta}X_{\epsilon_\beta-\epsilon_\alpha}\bar{u}_l)^T\\
           &+(X_{\epsilon_\alpha-\epsilon_\beta}u_j)\cdot(X_{\epsilon_\beta-\epsilon_\alpha}\bar{u}_l)^T+(X_{\epsilon_\beta-\epsilon_\alpha}u_j)\cdot(X_{\epsilon_\alpha-\epsilon_\beta}\bar{u}_l)^T\\
           =&z_{j,\alpha}\sum_{i=1}^n\delta^{i}_{\alpha}\bar{z}_{l,i}+\bar{z}_{l,\beta}\sum_{i=1}^n\delta^{i}_{\beta}z_{j,i}-z_{j,\alpha}\bar{z}_{l,\alpha}\sum_{i=1}^n\delta^{i}_{\beta}-z_{j,\beta}\bar{z}_{l,\beta}\sum_{i=1}^n\delta^{i}_{\alpha}\\
           =&\left\{\begin{aligned}
              &z_{j,\alpha}\bar{z}_{l,\alpha}-z_{j,\beta}\bar{z}_{l,\beta},\ \ &\textrm{ if }1\le\alpha\le n<\beta\le 2n;\\
              &z_{j,\beta}\bar{z}_{l,\beta}-z_{j,\alpha}\bar{z}_{l,\alpha}, \ \ &\textrm{ if }1\le\beta\le n<\alpha\le 2n;\\
              &0,\ \ &\textrm{ if }1\le\alpha\neq\beta\le n, \textrm{or}\ n<\alpha\neq\beta\le 2n.
              \end{aligned}\right.
       \end{aligned}
       \end{equation*}
    \end{proof}

    \subsection{Some Eigen-Polynomials For Casimir Operator}\label{subsection: Casimir operator}
    We consider the restriction map $\iota: \C[\Mat_{2n}^{\R}]\rightarrow C^\infty (K)$ that sends a polynomial function $f$ to $f|_{K}$. More precisely, if we take a polynomial $f(Z,\bar{Z})\in \C[\Mat_{2n}^{\R}]$, then the value of $\iota(f)$ at $k\in K$ is $f(k,\bar{k})$. The restriction map $\iota$ commutes with the right translation, hence $\iota$ is a $K$-module homomorphism from $\C[\Mat_{2n}^{\R}]$ to $C^\infty (K)$. Let $\mathcal{I}$ be the kernel of the restriction map $\iota$. Then $\mathcal{I}$ is a $K$-submodule automatically. We define:
    \begin{defn}\label{def: mod defining ideal}
         For two polynomials $P_1(Z,\bar{Z}), P_2(Z,\bar{Z}) \in \C[\Mat_{2n}^{\R}]$, we write $$P_1 \equiv P_2 \mod \mathcal{I},$$ if $\iota(P_1) = \iota(P_2)$.
    \end{defn}
    \begin{lemma}\label{lemma: moving variables to the left}
       For all $1\le j\neq l\le 2n$,
       \begin{equation*}
           \langle v_j , v_l \rangle_{c} \equiv -\langle u_j, u_l \rangle_{c} \mod \mathcal{I}.
       \end{equation*}
    \end{lemma}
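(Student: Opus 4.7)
The plan is to observe that the statement is really just the Hermitian orthogonality of distinct rows of a unitary matrix, rephrased in the polynomial notation set up in \eqref{def: uj and vj}.

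First I would note that for $k\in K=\mathrm{U}_{2n}$ one has $k\,\bar k^{T}=\RI_{2n}$, so if $k=(z_{i,j})$ denotes the entries of $k$, the $(j,l)$-entry of $k\bar k^{T}$ equals $\delta_{j,l}$. In particular, for $j\ne l$,
\begin{equation*}
     \sum_{i=1}^{2n} z_{j,i}\,\bar z_{l,i} \;=\; 0.
\end{equation*}
Splitting the sum into the first $n$ and the last $n$ indices and recalling the definitions
$\langle u_j,u_l\rangle_c=\sum_{i=1}^{n} z_{j,i}\bar z_{l,i}$ and $\langle v_j,v_l\rangle_c=\sum_{i=1}^{n} z_{j,n+i}\bar z_{l,n+i}$, this identity becomes $\langle u_j,u_l\rangle_c+\langle v_j,v_l\rangle_c=0$ on $K$.

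Next I would translate this into the $\mathcal{I}$-notation of Definition \ref{def: mod defining ideal}. The polynomial $P(Z,\bar Z):=\langle u_j,u_l\rangle_c+\langle v_j,v_l\rangle_c \in \C[\Mat_{2n}^{\R}]$ evaluated at $k\in K$ gives exactly the $(j,l)$-entry of $k\bar k^T$, which vanishes for $j\ne l$. Hence $\iota(P)=0$, i.e.\ $P\in\mathcal{I}$, which is precisely the assertion $\langle v_j,v_l\rangle_c\equiv -\langle u_j,u_l\rangle_c \mod \mathcal{I}$.

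There is essentially no obstacle here: the content of the lemma is entirely bookkeeping around the defining relations of the unitary group, and the only thing to be careful about is matching the convention for the Hermitian pairing $\langle\cdot,\cdot\rangle_c$ as defined just before Lemma \ref{Lemma: operators act on pairing} with the row-by-row product $k\bar k^T$. No computation involving the differential operators from \eqref{eq: differential operator of H}--\eqref{eq: differential operator of root vector} is needed; this lemma is the elementary geometric input that will later let us trade polynomials in $v_j$'s for polynomials in $u_j$'s when restricting to $K$.
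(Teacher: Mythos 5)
Your proof is correct and is essentially the same argument the paper gives: both use the unitary relation $Z\bar Z^{T}=\RI_{2n}$ on $K$ to show that the $(j,l)$-entry $\sum_{i=1}^{2n}z_{j,i}\bar z_{l,i}$ vanishes for $j\ne l$, then split the sum into the $u$- and $v$-parts to obtain $\iota(\langle u_j,u_l\rangle_c+\langle v_j,v_l\rangle_c)=0$. No meaningful differences.
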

    \begin{proof}
       By the definition \ref{def: mod defining ideal}, to show the above congruence expressions, we only need to prove that the polynomial $P(Z,\overline{Z})=\langle u_j, u_l\rangle_{c}+\langle v_j, v_l \rangle_{c}\ (j\neq l)$ is identical to zero when restricted to $K=\RU_{2n}$. Here $(u_j,v_j)$(resp.\ $(\bar{u}_j,\bar{v}_j)$) defined in \eqref{def: uj and vj} gives the $j$-th row of the matrix $Z$(resp.\ $\overline{Z}$).
       %When restricted to $K$, $Z$ can be viewed as a matrix in $\RU_{2n}$. Therefore,
       Since for all $Z\in K$, $Z\overline{Z}^{t}=\RI_{2n}$, we have $\iota\left((u_j,v_j)\cdot(\bar{u}_l,\bar{v}_l)^t\right)=0$ for all $j\neq l$. In other words, $\iota\left(\langle u_j, u_l\rangle_{c}+\langle v_j, v_l \rangle_{c}\right)=0$. This finishes the proof of the Lemma.
       \end{proof}

       Now we will give the construction of some Casimir eigen-polynomials. Since we only care about the restriction of a polynomial $f(Z,\bar{Z})\in \C[\Mat_{2n}^\R]$ on $K$, our eigen-polynomial is in the sense of $\mod\mathcal{I}$. To simplify notations, we set for all $1\leq j\neq l\leq 2n$,
       \begin{equation}\label{eq: def of Phi jl}
           \Phi_{jl} := \langle u_j, u_l \rangle_{c} = u_j\cdot\bar{u}_l^T.
       \end{equation}
       Then $\Phi_{jl}\in \C[\Mat_{2n}^\R]$.
       \begin{prop}\label{prop: degree 2 casimir eigen polynomial}
           Recall the Casimir operator $\Omega$ defined in \eqref{eq: Casimir operator 01}. For all $1\leq j\neq l\leq 2n$,
           \begin{equation}\label{eq: degree 2 casimir eigen polynomial}
              \Omega\Phi_{jl} \equiv 4n\Phi_{jl} \mod \mathcal{I}.
           \end{equation}
       \end{prop}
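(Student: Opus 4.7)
The plan is to apply the Casimir operator $\Omega$ piece by piece to $\Phi_{jl}$ using Lemma \ref{Lemma: operators act on pairing}, and then clean up the result modulo $\mathcal{I}$ using Lemma \ref{lemma: moving variables to the left}.

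First, by the first line of Lemma \ref{Lemma: operators act on pairing}, each diagonal operator satisfies $H_\gamma^2 \Phi_{jl} = 0$, so the contribution $\sum_{\gamma=1}^{2n} H_\gamma^2 \Phi_{jl}$ to $\Omega \Phi_{jl}$ is zero. It remains to evaluate the off-diagonal sum. Observe that
\begin{equation*}
\sum_{1\leq\alpha<\beta\leq 2n}\bigl(X_{\epsilon_\alpha-\epsilon_\beta}X_{-\epsilon_\alpha+\epsilon_\beta}+X_{-\epsilon_\alpha+\epsilon_\beta}X_{\epsilon_\alpha-\epsilon_\beta}\bigr) = \sum_{1\leq\alpha\neq\beta\leq 2n} X_{\epsilon_\alpha-\epsilon_\beta}X_{\epsilon_\beta-\epsilon_\alpha},
\end{equation*}
so only the quantity $X_{\epsilon_\alpha-\epsilon_\beta}X_{\epsilon_\beta-\epsilon_\alpha}\Phi_{jl}$ from the third line of Lemma \ref{Lemma: operators act on pairing} is needed.

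The key observation is that this quantity vanishes unless exactly one of the indices $\alpha,\beta$ lies in $\{1,\dots,n\}$ and the other in $\{n+1,\dots,2n\}$. In the case $1\leq\alpha\leq n<\beta\leq 2n$ it equals $z_{j,\alpha}\bar z_{l,\alpha}-z_{j,\beta}\bar z_{l,\beta}$, while in the symmetric case $1\leq\beta\leq n<\alpha\leq 2n$ it equals the negative, $z_{j,\beta}\bar z_{l,\beta}-z_{j,\alpha}\bar z_{l,\alpha}$. After relabeling, the two ranges give equal contributions, so
\begin{equation*}
\sum_{\alpha\neq\beta} X_{\epsilon_\alpha-\epsilon_\beta}X_{\epsilon_\beta-\epsilon_\alpha}\Phi_{jl} = 2\sum_{\alpha=1}^{n}\sum_{\beta=n+1}^{2n}\bigl(z_{j,\alpha}\bar z_{l,\alpha}-z_{j,\beta}\bar z_{l,\beta}\bigr) = 2n\bigl(\langle u_j,u_l\rangle_c-\langle v_j,v_l\rangle_c\bigr).
\end{equation*}
Combining with the vanishing of the diagonal part gives $\Omega\Phi_{jl} = 2n(\langle u_j,u_l\rangle_c-\langle v_j,v_l\rangle_c)$ in $\C[\Mat_{2n}^{\R}]$.

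Finally, since $j\neq l$, Lemma \ref{lemma: moving variables to the left} yields $\langle v_j,v_l\rangle_c\equiv -\langle u_j,u_l\rangle_c\mod\mathcal{I}$, hence $\Omega\Phi_{jl}\equiv 2n\cdot 2\langle u_j,u_l\rangle_c = 4n\Phi_{jl}\pmod{\mathcal{I}}$. There is no real obstacle here; the only place care is needed is the bookkeeping of the index ranges in the case distinction of Lemma \ref{Lemma: operators act on pairing} and the fact that the identity $\langle v_j,v_l\rangle_c=-\langle u_j,u_l\rangle_c$ is only valid after restriction to $K$, which is exactly why the conclusion is stated modulo $\mathcal{I}$.
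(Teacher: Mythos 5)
Your proof is correct and follows essentially the same route as the paper's: kill the $\sum_\gamma H_\gamma^2$ contribution via $H_\gamma\Phi_{jl}=0$, evaluate the off-diagonal sum using the third line of Lemma \ref{Lemma: operators act on pairing}, and then apply Lemma \ref{lemma: moving variables to the left} to identify $\langle v_j,v_l\rangle_c$ with $-\langle u_j,u_l\rangle_c$ modulo $\mathcal{I}$. The only cosmetic difference is how you get the factor of $2$: the paper invokes the bracket relation \eqref{eq: Lie bracket relation 01} to write $\Omega\Phi_{jl}=2\sum_{\alpha<\beta}X_{\epsilon_\beta-\epsilon_\alpha}X_{\epsilon_\alpha-\epsilon_\beta}\Phi_{jl}$ (again using $H_\gamma\Phi_{jl}=0$), whereas you simply re-index the symmetric sum over $\{\alpha<\beta\}$ as a sum over $\{\alpha\ne\beta\}$ without needing the commutator identity at all — a small simplification that reaches the identical intermediate expression $2n(\langle u_j,u_l\rangle_c-\langle v_j,v_l\rangle_c)$.
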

       \begin{proof}
           By the Lie bracket relation \eqref{eq: Lie bracket relation 01} and Lemma \ref{Lemma: operators act on pairing}, we can simplify
           \begin{equation*}\label{eq: 032}
               \begin{aligned}
               \Omega\Phi_{jl}=&2\sum_{1\leq \alpha<\beta\leq 2n}X_{\epsilon_\beta-\epsilon_\alpha}X_{\epsilon_\alpha-\epsilon_\beta}\Phi_{jl}\\
               =&2\sum_{1\leq \alpha\le n<\beta\leq 2n}z_{j,\alpha}\bar{z}_{l,\alpha}-z_{j,\beta}\bar{z}_{l,\beta}\\
               =&2n\big(\sum^n_{\alpha=1}z_{j,\alpha}\bar{z}_{l,\alpha}-\sum^{2n}_{\beta=n+1}z_{j,\beta}\bar{z}_{l,\beta}\big).
               \end{aligned}
           \end{equation*}
           Applying Lemma \ref{lemma: moving variables to the left}, we get
           \begin{equation*}
           \Omega\Phi_{jl}=2n(\langle u_j, u_l \rangle_{c}-\langle v_j, v_l \rangle_{c})\equiv 4n\Phi_{jl} \mod \mathcal{I}.
           \end{equation*}
       \end{proof}

       \begin{thm}\label{thm: eigen polynomial for even fundamental repns}
           For each integer $1\leq k\leq n$, we define a polynomial $F_{k}(Z,\bar{Z})\in\C[\Mat_{2n}^\R]$ as follows:
           \begin{equation}\label{eq: eigen polynomial of fundamental repns}
                F_{k} := \sum_{s\in S_k} \text{sgn}(s)\cdot\Phi_{1,s(1)+1}\Phi_{3,s(3)+1}\cdots\Phi_{2k-1,s(2k-1)+1},\\
           \end{equation}
           where $S_k$ is the permutation group of the set $\{1, 3, \cdots, 2k-1\}$ and $\text{sgn}(s)$ is the sign of the permutation $s$, i.e. it is $1$ if $s$ is an even permutation, and it is $-1$ if $s$ is an odd permutation.
           Then
           \begin{equation}\label{eq: eigenvalue polynomial of fundamental repns}
              \Omega F_{k} \equiv (4nk-2k(k-1))F_{k} \mod\mathcal{I}.
           \end{equation}
       \end{thm}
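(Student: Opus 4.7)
The plan is to compute $\Omega F_k$ by viewing $\Omega$ as a second-order operator in the universal enveloping algebra and applying Leibniz's rule to each summand of $F_k$. For any two functions $f, g$ we have $\Omega(fg) = \Omega(f) g + f \Omega(g) + 2D(f,g)$, where the symmetric bilinear ``polarization''
\begin{equation*}
  D(f,g) := \sum_{\gamma=1}^{2n} H_\gamma(f)H_\gamma(g) + \sum_{1\le\alpha<\beta\le 2n}\bigl[X_{\epsilon_\alpha-\epsilon_\beta}(f)\,X_{\epsilon_\beta-\epsilon_\alpha}(g) + X_{\epsilon_\beta-\epsilon_\alpha}(f)\,X_{\epsilon_\alpha-\epsilon_\beta}(g)\bigr].
\end{equation*}
Iterating this over a product of $k$ factors $\Phi_{2i-1,s(2i-1)+1}$ splits $\Omega F_k$ into a ``diagonal'' piece, where $\Omega$ hits exactly one factor, and a ``cross'' piece, where $D$ couples two distinct factors. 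The diagonal piece, by Proposition \ref{prop: degree 2 casimir eigen polynomial}, immediately contributes $4n \cdot k \cdot F_k \bmod \mathcal{I}$: for every permutation $s$ and every index $i$, $\Omega(\Phi_{2i-1,s(2i-1)+1}) \equiv 4n\,\Phi_{2i-1,s(2i-1)+1}\bmod\mathcal{I}$, and there are $k$ choices of $i$.

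Next I would evaluate the cross term $D(\Phi_{a,b},\Phi_{c,d})$ for four distinct indices $a,b,c,d$ (which is the situation that arises, since $s$ is a permutation). By Lemma \ref{Lemma: operators act on pairing}, every $H_\gamma$ annihilates $\Phi_{jl}$, so the $H$-contribution vanishes. For the $X$-contribution, only the pairs with $\alpha \le n < \beta$ give nonzero first-order derivatives, and plugging in the explicit formulas from the same lemma yields
\begin{equation*}
  D(\Phi_{a,b},\Phi_{c,d}) = -\Phi_{a,d}\,\langle v_c,v_b\rangle_c - \langle v_a,v_d\rangle_c\,\Phi_{c,b}.
\end{equation*}
Applying Lemma \ref{lemma: moving variables to the left} to replace each $\langle v_j,v_l\rangle_c$ by $-\Phi_{j,l} \bmod \mathcal{I}$, this simplifies to
\begin{equation*}
  D(\Phi_{a,b},\Phi_{c,d}) \equiv 2\,\Phi_{a,d}\,\Phi_{c,b} \mod \mathcal{I}.
\end{equation*}

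Finally, I would exploit the antisymmetrization built into $F_k$. For a fixed pair $i<j$, the contribution of $D(\Phi_{2i-1,s(2i-1)+1},\Phi_{2j-1,s(2j-1)+1})$ is, up to the factor $4$ coming from $2\cdot D \equiv 4\Phi\Phi$, the product obtained from the original summand by swapping the column labels $s(2i-1)+1$ and $s(2j-1)+1$. This swap corresponds to replacing $s$ by $s\circ\tau_{ij}$, where $\tau_{ij}$ is the transposition interchanging $2i-1$ and $2j-1$ inside $S_k$; since $\mathrm{sgn}(s\tau_{ij})=-\mathrm{sgn}(s)$, summing over $s\in S_k$ produces $-F_k$. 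Therefore each pair $i<j$ contributes $-4F_k$, and summing over all $\binom{k}{2}$ pairs yields $-2k(k-1)F_k$. Combining the diagonal and cross contributions gives $\Omega F_k \equiv \bigl(4nk - 2k(k-1)\bigr)F_k \bmod \mathcal{I}$, as desired.

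The main obstacle is the cross-term computation in the second paragraph: one must carefully track which pairs $(\alpha,\beta)$ yield nonzero first-order derivatives, correctly identify the resulting rank-one bilinears in the $u_j,\bar u_l,v_j,\bar v_l$ variables, and then pass to the quotient by $\mathcal{I}$ via Lemma \ref{lemma: moving variables to the left}. Once this identity $D(\Phi_{a,b},\Phi_{c,d}) \equiv 2\Phi_{a,d}\Phi_{c,b} \bmod \mathcal{I}$ is in hand, the antisymmetrization argument and the final counting are essentially formal.
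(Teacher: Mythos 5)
Your proposal is correct and follows essentially the same approach as the paper: a Leibniz expansion of $\Omega F_k$ into a diagonal piece handled by Proposition \ref{prop: degree 2 casimir eigen polynomial}, a vanishing $H$-cross piece, and an $X$-cross piece that, after Lemma \ref{lemma: moving variables to the left}, is absorbed by the antisymmetrization via the transposition $s \mapsto s\,s_{jl}$. Your explicit polarization identity $D(\Phi_{a,b},\Phi_{c,d}) \equiv 2\Phi_{a,d}\Phi_{c,b} \bmod\mathcal{I}$ is exactly what appears (in unpackaged form) in the paper's equations \eqref{eq: 028}--\eqref{eq: 029}, and your counting $\binom{k}{2}\cdot(-4) = -2k(k-1)$ matches theirs.
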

       \begin{proof}
           By the definition of Casimir operator \eqref{eq: Casimir operator 01}, we find that
           \begin{equation}\label{eq: 025}
             \begin{aligned}
                \Omega F_{k} =&\sum_{s\in S_k}\text{sgn}(s)\sum_{l=1}^k\big(\prod_{j\neq l}\Phi_{j,s(j)+1}\big)\cdot\Omega\Phi_{l,s(l)+1} \\
                &+\sum_{\gamma=1}^{2n}\sum_{s\in S_k}\text{sgn}(s)\sum_{j\neq l}\big(\prod_{i\neq (j,l)}\Phi_{i,s(i)+1}\big)\cdot H_{\gamma}\Phi_{j,s(j)+1}\cdot H_{\gamma}\Phi_{l,s(l)+1}\\
                &+\sum_{1\le\alpha<\beta\le 2n}\sum_{s\in S_k}\text{sgn}(s)\sum_{j\neq l}\big(\prod_{i\neq (j,l)}\Phi_{i,s(i)+1}\big)\cdot\\
                &\big(X_{\epsilon_\alpha-\epsilon_\beta}\Phi_{j,s(j)+1}\cdot X_{\epsilon_\beta-\epsilon_\alpha}\Phi_{l,s(l)+1}
                                 +X_{\epsilon_\beta-\epsilon_\alpha}\Phi_{j,s(j)+1}\cdot X_{\epsilon_\alpha-\epsilon_\beta}\Phi_{l,s(l)+1}\big).
             \end{aligned}
           \end{equation}
           By Proposition \ref{prop: degree 2 casimir eigen polynomial}, the first summand on the RHS of \eqref{eq: 025} satisfies
           \begin{equation}\label{eq: 039}
              \sum_{s\in S_k}\text{sgn}(s)\sum_{l=1}^k\big(\prod_{j\ne l} \Phi_{j,s(j)+1}\big)\cdot\Omega\Phi_{l,s(l)+1} \equiv 4nk F_{k} \mod\mathcal{I}.
           \end{equation}
           By Lemma \ref{Lemma: operators act on pairing}, the second summand on the RHS of \eqref{eq: 025} is zero, and the third summand is equal to
           \begin{equation}\label{eq: 028}
             \begin{aligned}
               &\sum_{1\le\alpha\le n<\beta\le 2n}\sum_{s\in S_k}\text{sgn}(s)\sum_{j\neq l}\big(\prod_{i\neq (j,l)}\Phi_{i,s(i)+1}\big)(-z_{j,\alpha}\bar{z_{s(j)+1,\beta}}\cdot z_{l,\beta}\bar{z_{s(l)+1,\alpha}}-z_{j,\beta}\bar{z_{s(j)+1,\alpha}}\cdot z_{l,\alpha}\bar{z_{s(l)+1,\beta}})\\
               =&\sum_{s\in S_k}\text{sgn}(s)\sum_{j\neq l}\big(\prod_{i\neq (j,l)}\Phi_{i,s(i)+1}\big)(-\langle u_j, u_{s(l)+1}\rangle_{c}\cdot\langle v_l, v_{s(j)+1}\rangle_{c}-\langle v_j, v_{s(l)+1}\rangle_{c}\cdot\langle u_l, u_{s(j)+1}\rangle_{c}).
             \end{aligned}
           \end{equation}
           By Lemma \ref{lemma: moving variables to the left},
           \begin{equation}\label{eq: congruence identity}
           \begin{aligned}
           \langle v_l, v_{s(j)+1}\rangle_{c} \equiv -\langle u_l, u_{s(j)+1}\rangle_{c} \mod\mathcal{I},\\
           \langle v_j, v_{s(l)+1}\rangle_{c} \equiv -\langle u_j, u_{s(l)+1}\rangle_{c} \mod\mathcal{I}.
           \end{aligned}
           \end{equation}
           Substituting \eqref{eq: congruence identity} into \eqref{eq: 028}, the third summand on the RHS of \eqref{eq: 025} contributes
           \begin{equation}\label{eq: 029}
                2\sum_{s\in S_k}\text{sgn}(s)\sum_{j\neq l}\big(\prod_{i\neq (j,l)}\Phi_{i,s(i)+1}\big)\Phi_{j,s(l)+1}\Phi_{l,s(j)+1} \mod \mathcal{I}.
           \end{equation}
           Let $s_{jl}$ be the transposition in $S_k$ interchanging $j$ and $l$. By changing the variable $s\mapsto s\cdot s_{jl}$,
           \begin{equation}\label{eq: 031}
               \begin{aligned}
               &2\sum_{s\in S_k}\text{sgn}(s)\sum_{j\neq l}\big(\prod_{i\neq (j,l)}\Phi_{i,s(i)+1}\big)\Phi_{j,s(l)+1}\Phi_{l,s(j)+1}\\
               =&2\sum_{s\in S_k}\sum_{j\neq l}\text{sgn}(s\cdot s_{jl})\big(\prod_{i\neq (j,l)}\Phi_{i,s(i)+1}\big)\Phi_{j,s(j)+1}\Phi_{l,s(l)+1}\\
               =&-2k(k-1)F_{k}.
               \end{aligned}
           \end{equation}
           Combining \eqref{eq: 039} and \eqref{eq: 031}, we get \eqref{eq: eigenvalue polynomial of fundamental repns}.
       \end{proof}
       \begin{rk}
           Since the unitary group $\mathrm{U}_n$ preserves the Hermitian pairing in $\C^n$. We can immediately obtain that all $\Phi_{jl}$, and hence all $F_k$, are right $K\cap H = \mathrm{U}_n\times \mathrm{U}_n$ invariant.
       \end{rk}
       \begin{cor}\label{cor: even fundamental polynomial live in even fundamental repn}
         For each $j=1,2,\cdots, n$, let $F_{j}$ be the polynomial constructed in Theorem \ref{thm: eigen polynomial for even fundamental repns}. We regard
         \begin{equation*}
              \pi_j := \Ind_{T\cap K}^K \underbrace{\chi_{1}\otimes\chi_{-1}\otimes\cdots\otimes \chi_{1}\otimes\chi_{-1}}_{\text{j pairs}}\otimes\text{id}\otimes\cdots\otimes\text{id}
         \end{equation*}
         as a $K$-submodule of $C^\infty(K)$. Set $\Lambda_j := (\underbrace{1,\cdots,1}_{j},0,\cdots,0,\underbrace{-1,\cdots,-1}_{j})$. Then $\iota(F_{j})\in\pi_j$ and it generates an irreducible submodule $(\tau_{j}, V_j)$ of $K$ with highest weight $\Lambda_j$.
    \end{cor}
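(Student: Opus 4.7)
The plan is to verify three things about $\iota(F_j)$: (a) it lies in $\pi_j$; (b) it is a nonzero Casimir eigenvector whose eigenvalue forces it into the $\tau_{\Lambda_j}$-isotypic component of $\pi_j$; and (c) the cyclic $K$-submodule it generates is a single irreducible copy of $\tau_{\Lambda_j}$, thanks to the $(\RU_{2n},\RU_n\times\RU_n)$-sphericality of $\tau_{\Lambda_j}$.

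For (a), I compute directly: given $t=\diag(t_1,\ldots,t_{2n})\in T\cap K$ and $k\in K$, we have $\Phi_{p,q}(tk,\overline{tk})=t_p t_q^{-1}\Phi_{p,q}(k,\bar{k})$ since $|t_i|=1$. Each summand of $F_j$ thus picks up the factor $t_1 t_3\cdots t_{2j-1}\cdot t_{s(1)+1}^{-1} t_{s(3)+1}^{-1}\cdots t_{s(2j-1)+1}^{-1}$; because $s$ permutes $\{1,3,\ldots,2j-1\}$, the second product collapses to $t_2^{-1}t_4^{-1}\cdots t_{2j}^{-1}$, independent of $s$. The common scalar is exactly $(\chi_1\otimes\chi_{-1}\otimes\cdots\otimes\chi_1\otimes\chi_{-1}\otimes\textrm{id}\otimes\cdots\otimes\textrm{id})(t)$, proving $\iota(F_j)\in\pi_j$.

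For (b), Theorem \ref{thm: eigen polynomial for even fundamental repns} yields $\Omega\iota(F_j)=c_j\iota(F_j)$ with $c_j:=4nj-2j(j-1)$, and evaluating \eqref{eq: casimir eigenvalue} at $\Lambda_j$ produces the same scalar. By Frobenius reciprocity the $K$-types occurring in $\pi_j$ are the $\tau_\nu$ for which the inducing character is a weight of $\tau_\nu$, equivalently the dominant integer $\nu$ satisfying $\nu\succeq\Lambda_j$ in dominance order; in type $A$ this coincides with $\nu-\Lambda_j$ being a nonnegative integer combination of positive roots. For any such $\nu$,
\begin{equation*}
\|\nu+\rho_K\|^2-\|\Lambda_j+\rho_K\|^2=\|\nu-\Lambda_j\|^2+2\langle\nu-\Lambda_j,\,\Lambda_j+\rho_K\rangle\geq 0,
\end{equation*}
with equality iff $\nu=\Lambda_j$, since $\Lambda_j+\rho_K$ is strictly dominant. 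So the $\Omega$-eigenspace of $\pi_j$ with eigenvalue $c_j$ coincides with the $\tau_{\Lambda_j}$-isotypic component, and $\iota(F_j)$ sits entirely inside it.

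For (c), I would first exhibit $Z_0\in K$ whose rows $2i-1$ and $2i$ ($1\leq i\leq j$) are $\tfrac{1}{\sqrt{2}}(e_i,e_i)$ and $\tfrac{1}{\sqrt{2}}(e_i,-e_i)$ respectively (possible since $j\leq n$, completing to a unitary by any orthonormal basis of the orthogonal complement); then $\Phi_{2i-1,2l}(Z_0,\bar{Z}_0)=\tfrac{1}{2}\delta_{il}$ yields $F_j(Z_0,\bar{Z}_0)=2^{-j}\neq 0$, so $\iota(F_j)\neq 0$. The remark following Theorem \ref{thm: eigen polynomial for even fundamental repns} records that each $\Phi_{p,q}$ is right $\RU_n\times\RU_n$-invariant, so $\iota(F_j)\in\pi_j^{K\cap H}$. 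Because $\Lambda_j=(1^j,0^{2n-2j},(-1)^j)$ has the symmetric form $(\lambda_1,\ldots,\lambda_n,-\lambda_n,\ldots,-\lambda_1)$, Cartan--Helgason together with the Gelfand pair property of $(\RU_{2n},\RU_n\times\RU_n)$ guarantees that the $K\cap H$-fixed subspace of $\tau_{\Lambda_j}$ is one-dimensional. Writing the $\tau_{\Lambda_j}$-isotypic component of $\pi_j$ as $\tau_{\Lambda_j}\otimes W$ (with $K$ acting trivially on $W$), its $K\cap H$-invariants form the one-dimensional-in-the-$\tau$-factor slice $(\tau_{\Lambda_j})^{K\cap H}\otimes W$. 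Hence $\iota(F_j)=v_0\otimes w_0$ for some $v_0\in(\tau_{\Lambda_j})^{K\cap H}$ and $w_0\in W$, and the cyclic $K$-submodule it generates is $\tau_{\Lambda_j}\otimes\C w_0\cong\tau_{\Lambda_j}$. The most delicate ingredient is the strict monotonicity of Casimir in (b), which crucially uses the strict dominance of $\rho_K$; the conclusion would fail without it.
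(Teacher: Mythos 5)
Your proposal is correct, and it is in fact more complete than the paper's own proof. Parts (a) and (b) essentially match the paper's argument: the left $T\cap K$-equivariance computed from the scaling of each $\Phi_{p,q}$ places $\iota(F_j)$ in $\pi_j$, and the Casimir eigenvalue from Theorem \ref{thm: eigen polynomial for even fundamental repns}, combined with the Frobenius-reciprocity lower bound on admissible highest weights and the strict monotonicity of $\nu\mapsto\langle\nu+\rho_K,\nu+\rho_K\rangle$ on the cone above $\Lambda_j$, forces $\iota(F_j)$ into the $\tau_{\Lambda_j}$-isotypic component. Where you go further is part (c): the paper's proof stops once the isotypic component is identified, leaving both the nonvanishing of $\iota(F_j)$ and the reason the cyclic module is a \emph{single} irreducible copy unaddressed (a congruence $\Omega F_j\equiv C_j F_j\bmod\mathcal{I}$ is vacuous if $\iota(F_j)=0$). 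Your evaluation at $Z_0$ giving $F_j(Z_0,\bar Z_0)=2^{-j}\ne 0$ is a clean way to settle nonvanishing, and the Gelfand-pair/Cartan--Helgason argument for irreducibility of the cyclic submodule is valid. The shorter route the paper presumably has in mind is that the inducing character of $\pi_j$ is a Weyl conjugate of $\Lambda_j$, hence an extreme weight of $\tau_{\Lambda_j}$, so Frobenius reciprocity already gives $\dim\Hom_K(\tau_{\Lambda_j},\pi_j)=1$; the $\tau_{\Lambda_j}$-isotypic component of $\pi_j$ is then itself a single irreducible, and any nonzero vector in it generates the whole copy. Your Gelfand-pair argument is more robust (it would survive higher multiplicity) at the cost of invoking the sphericality of $\tau_{\Lambda_j}$, which the extreme-weight count does not need; either route correctly finishes the statement.
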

    \begin{proof}
         From a direct matrix computation, we see that
       \begin{equation}\label{eq: 083}
           \begin{aligned}
           &\Phi_{jl}(\diag(e^{\boldsymbol{i}\theta_1},\cdots,e^{\boldsymbol{i}\theta_{2n}})Z,\diag(e^{-\boldsymbol{i}\theta_1},\cdots,e^{-\boldsymbol{i}\theta_{2n}})\bar{Z}) = &e^{\boldsymbol{i}\theta_j}u_j\cdot(e^{-\boldsymbol{i}\theta_l}\bar{u}_l)^T = e^{\boldsymbol{i}(\theta_j-\theta_l)}\Phi_{jl}(Z,\bar{Z}).
           \end{aligned}
       \end{equation}
       Thus, from \eqref{eq: eigen polynomial of fundamental repns} and \eqref{eq: 083}, we can obtain that for $1\le k\le n$,
       \begin{equation}\label{eq: 084}
              \begin{aligned}
                &F_{k}(\diag(e^{\boldsymbol{i}\theta_1},\ldots,e^{\boldsymbol{i}\theta_{2n}})Z,\diag(e^{-\boldsymbol{i}\theta_1},\ldots,e^{-\boldsymbol{i}\theta_{2n}})\bar{Z}) \\
           =&\sum_{s\in S_k} \text{sgn}(s)\cdot\prod_{i=1}^k\Phi_{2i-1,s(2i-1)+1}(\diag(e^{\boldsymbol{i}\theta_1},\cdots,e^{\boldsymbol{i}\theta_{2n}})Z,\diag(e^{-\boldsymbol{i}\theta_1},\cdots,e^{-\boldsymbol{i}\theta_{2n}})\bar{Z})\\
           =&\prod_{i=1}^ke^{\boldsymbol{i}(\theta_{2i-1}-\theta_{2i})}\sum_{s\in S_k} \text{sgn}(s)\cdot\Phi_{1,s(1)+1}(Z,\bar{Z})\cdots\Phi_{2k-1,s(2k-1)+1}(Z,\bar{Z})\\
           =&\prod_{i=1}^ke^{\boldsymbol{i}(\theta_{2i-1}-\theta_{2i})}F_{k}(Z,\bar{Z}).
           \end{aligned}
       \end{equation}
         By \eqref{eq: 084}, $\iota(F_{j})$ belongs to the induced space $\pi_{j}$, hence
         \begin{equation*}
             \iota(F_{j})\in \iota(\C^{2j}[\Mat_{2n}^{\R}])\cap \pi_{j}.
         \end{equation*}
         By \eqref{eq: casimir eigenvalue}, the Casimir operator $\Omega$ acts on the highest weight representation $\tau_j$ via the scalar
          \begin{equation*}
              C_j := <\Lambda_j+\rho_K, \Lambda_j+\rho_K> -<\rho_K,\rho_K>=4nj-2j(j-1).
          \end{equation*}
         By Theorem \ref{thm: eigen polynomial for even fundamental repns},
         \begin{equation}
             \Omega\cdot \iota(F_{j}) = C_j \cdot\iota(F_{j}).
         \end{equation}
         Thus the set of irreducible $K$-submodules of $\iota(C^{2j}[\Mat_{2n}^{\R}])\cap \pi_{j}$ on which $\Omega$ acts by the scalar $C_j$ is non-empty.

         Suppose $\tau_\nu$ is a $K$-submodule with the highest weight $\nu$ living in the intersection $$\iota(\C^{2j}[\Mat_{2n}^{\R}])\cap \pi_{j}.$$
         Applying the Frobenius Reciprocity Law, we get
         \begin{equation}\label{eq: 090}
                \begin{aligned}
               &\Hom_{T\cap K}(\tau_{\nu}|_{T\cap K},\chi_{1}\otimes\cdots\otimes\chi_{1}\otimes\text{id}\otimes\cdots\otimes\text{id}\otimes \chi_{-1}\otimes\cdots\otimes\chi_{-1})\\
               =&\Hom_{K}(\tau_{\nu},\Ind_{T\cap K}^K\chi_{1}\otimes\cdots\otimes\chi_{1}\otimes\text{id}\otimes\cdots\otimes\text{id}\otimes \chi_{-1}\otimes\cdots\otimes\chi_{-1})\\
               \cong&\Hom_K(\tau_\nu, \pi_{j})\ne 0.
               \end{aligned}
         \end{equation}
         This implies that $\nu$ is higher than $\Lambda_j$, i.e. $\nu=\Lambda_j+\delta$ where $\delta$ is a non-negative linear combination of some positive roots. Then the Casimir operator acts on $\tau_\nu$ by the scalar $<\nu+\rho_K, \nu+\rho_K>-<\rho_K,\rho_K>$, which satisfies the following estimate:
         \begin{equation*}
             \begin{aligned}
             &<\nu+\rho_K, \nu+\rho_K>-<\rho_K,\rho_K>\\
             =&<\Lambda_j+\delta+\rho_K, \Lambda_j+\delta+\rho_K>-<\rho_K,\rho_K>\\
             =&<\Lambda_j+\rho_K,\Lambda_j+\rho_K>+2<\delta,\Lambda_j+\rho_K> +<\delta,\delta>-<\rho_K,\rho_K>\\
             \geq&<\Lambda_j+\rho_K,\Lambda_j+\rho_K>-<\rho_K,\rho_K>=C_j.
             \end{aligned}
         \end{equation*}
         The above equality holds only when $\delta=0$, i.e. $\nu=\Lambda_j$. Thus, the only irreducible $K$-submodule of $\iota(\C^{2j}[\Mat_{2n}^{\R}])\cap \pi_{j}$ with the Casimir eigenvalue $C_j$ must have highest weight $\Lambda_j$. This completes the proof of the Corollary.
       \end{proof}
       Now we will give the construction of $\Delta_{1,\pm}$ and $\Delta_{2,\pm}$, which will contribute to the right $K\cap H$-equivariance in the cohomological vector.
       \begin{prop}\label{prop: det lives in fundamental repn}
           Let us consider the following polynomials in $\C[\Mat_{2n}^\R]$:
           \begin{equation}\label{eq: det polynomial}
             \begin{aligned}
              \Delta_{1,+}(Z,\bar{Z}):&=\det\left(
                                        \begin{array}{c}
                                          u_1 \\
                                          u_3 \\
                                          \cdots \\
                                          u_{2n-1} \\
                                        \end{array}
                                      \right),\quad\quad
               \Delta_{1,-}(Z,\bar{Z}):=\det\left(
                                        \begin{array}{c}
                                          \bar{u}_2 \\
                                          \bar{u}_4 \\
                                          \cdots \\
                                          \bar{u}_{2n}, \\
                                        \end{array}
                                      \right),\\
                \Delta_{2,+}(Z,\bar{Z}):&=\det(Z),\quad\quad\quad\quad\quad\quad
               \Delta_{2,-}(Z,\bar{Z}):=\det(\bar{Z}),
               \end{aligned}
           \end{equation}
           where $u_j, \bar{u}_j$ are defined in \eqref{def: uj and vj}. Then $\Delta_{1,+}($resp. $\Delta_{1,-})$ generates an irreducible $K$-submodule $W_1($resp. $W_{-1})$ of $\C^n[\Mat_{2n}^{\R}]$ with highest weight $(\underbrace{1,\cdots,1}_n, 0,\cdots, 0)($resp. highest weight $(0,\cdots, 0, \underbrace{-1,\cdots,-1}_n))$. $\Delta_{2,+}($resp. $\Delta_{2,-})$ generates an irreducible $K$-submodule  $W'_1($resp. $W'_{-1})$ of $\C^{2n}[\Mat_{2n}^{\R}]$ with highest weight $(1,\ldots,1)($resp. $(-1,\ldots,-1))$.
       \end{prop}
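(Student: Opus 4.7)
\begin{proof-idea}
The plan is to identify each polynomial as a $T\cap K$-weight vector and then as a highest weight vector (for $\Delta_{1,+}$ and $\Delta_{2,+}$) or a lowest weight vector (for $\Delta_{1,-}$ and $\Delta_{2,-}$). Because the ambient space $\C[\Mat_{2n}^{\R}]$ is a completely reducible $K$-module, a cyclic submodule generated by a highest (resp.\ lowest) weight vector is automatically irreducible, and its highest weight equals the weight of the generator (resp.\ the coordinate-reversal of the generator's weight, via the longest Weyl element of $\mathrm{U}_{2n}$).

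The cases $\Delta_{2,\pm}$ are essentially trivial: under $Z\mapsto Zk$ one has $\det(Z)\mapsto\det(k)\det(Z)$ and $\det(\bar Z)\mapsto\overline{\det(k)}\det(\bar Z)$, so each $\Delta_{2,\pm}$ spans a one-dimensional $K$-submodule with the claimed character. For $\Delta_{1,\pm}$, I first compute the torus weight: for $t=\mathrm{diag}(e^{\boldsymbol{i}\theta_1},\ldots,e^{\boldsymbol{i}\theta_{2n}})$, $Z\mapsto Zt$ multiplies the $i$-th column of $Z$ by $e^{\boldsymbol{i}\theta_i}$ and the $i$-th column of $\bar Z$ by $e^{-\boldsymbol{i}\theta_i}$. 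Since both $\Delta_{1,+}$ and $\Delta_{1,-}$ are determinants of $n\times n$ submatrices supported on columns $1,\ldots,n$, these carry weights $(\underbrace{1,\ldots,1}_n,0,\ldots,0)$ and $(\underbrace{-1,\ldots,-1}_n,0,\ldots,0)$ respectively. Note that the latter is not dominant; its reverse $(0,\ldots,0,\underbrace{-1,\ldots,-1}_n)$ is the advertised highest weight.

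The key step is to verify annihilation by the appropriate root vectors using the formula \eqref{eq: differential operator of root vector}. Because $\Delta_{1,+}$ involves only $z$-variables, $E_{\alpha\beta}\Delta_{1,+}=\sum_\gamma z_{\gamma,\alpha}\,\partial\Delta_{1,+}/\partial z_{\gamma,\beta}$; for $\alpha<\beta$, this vanishes either trivially when $\beta>n$ (no dependence on $z_{\gamma,\beta}$), or by Laplace expansion when $\alpha<\beta\le n$, since the sum then equals the determinant of the submatrix with its $\beta$-th column overwritten by its $\alpha$-th column, producing two equal columns. The symmetric argument applied to the $\bar z$-part of $E_{\alpha\beta}$ shows $E_{\alpha\beta}\Delta_{1,-}=-\sum_\gamma \bar z_{\gamma,\beta}\,\partial\Delta_{1,-}/\partial\bar z_{\gamma,\alpha}=0$ for $\alpha>\beta$, so $\Delta_{1,-}$ is a lowest weight vector. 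The only mildly subtle point in the whole argument is the Weyl-group bookkeeping for $\Delta_{1,-}$: one must remember that in $\mathrm{U}_{2n}$ the highest weight of an irreducible representation is the reversed sequence of its lowest weight, which converts $(-1,\ldots,-1,0,\ldots,0)$ into $(0,\ldots,0,-1,\ldots,-1)$ as claimed.
\end{proof-idea}
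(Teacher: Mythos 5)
Your argument is correct, but it takes a genuinely different route from the paper's. The paper identifies $\Delta_{1,+}$ as an element of $\bigwedge^n(\C^{2n})^\ast$ (viewing the rows of $Z$ as vectors in the standard representation of $\RU_{2n}$), invokes the known irreducibility and highest weight $\omega_n$ of the $n$-th fundamental representation of $\mathfrak{sl}_{2n}(\C)$, and then lifts the highest weight from $\mathfrak{sl}_{2n}$ to $\mathfrak{gl}_{2n}$ by computing the central character via $(\sum_\gamma H_\gamma)\Delta_{1,+}=n\Delta_{1,+}$; it handles $\Delta_{1,-}$ by identifying $\overline{\C^{2n}}$ with $(\C^{2n})^\ast$ through the invariant Hermitian form, and similarly for $\Delta_{2,\pm}$. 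You instead verify directly, using the differential operators \eqref{eq: differential operator of root vector} and a repeated-column determinant identity, that $\Delta_{1,+}$ and $\Delta_{2,+}$ are killed by all positive root vectors (hence are highest weight vectors) and that $\Delta_{1,-}$, $\Delta_{2,-}$ are killed by all negative root vectors (lowest weight vectors), then use complete reducibility of $\C^{\bullet}[\Mat_{2n}^\R]$ and the longest Weyl element to pass from lowest to highest weight. Your approach is more self-contained and stays entirely inside the differential-operator framework already set up in \S\ref{subsection: Polynomial repns}, at the cost of some case-checking on $\alpha,\beta$; the paper's approach is shorter once one grants facts about fundamental representations but requires the $\mathfrak{sl}\to\mathfrak{gl}$ central character bookkeeping. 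Both reach the same conclusion with the same weights, including the coordinate reversal $(-1,\ldots,-1,0,\ldots,0)\mapsto(0,\ldots,0,-1,\ldots,-1)$ you correctly flag for $\Delta_{1,-}$.
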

       \begin{proof}
           Recall that $K$ acts on $\C[\Mat_{2n}^{\R}]$ by right multiplication, then it acts on each row vector $(u_j,v_j)$ of the matrix $Z$ by right multiplication. By restriction on the identity component, this right multiplication yields a standard representation of $K^0=\SU_{2n}$ on $\C^{2n}$.  Hence, $\Delta_{1,+}$ lives in the representation space $\bigwedge^n(\C^{2n})^{\ast}$. By passing to the complexified Lie algebra, $\bigwedge^n\C^{2n}$ is the $n$-th fundamental representation of $\mathfrak{sl}_{2n}(\C)$, which is irreducible, self dual and has highest weight $\omega_n=\sum_{i=1}^n\epsilon_i-\frac{1}{2}\sum_{i=1}^{2n}\epsilon_i$.

           To compute the highest weight $\mu$ of $\bigwedge^n\C^{2n}$ as a representation of $\mathfrak{k}^\C = \mathfrak{gl}_{2n}(\C)$, we only need to check the central character (see \cite[Theorem 5.5.22]{G-W}). Write $$\mu=\sum_{i=1}^{2n}m_i\epsilon_i=\sum_{i=1}^{2n-1}(m_i-m_{i+1})\lambda_i+m_{2n}\lambda_{2n},$$
           where $\lambda_i=\sum_{k=1}^i\epsilon_k\in\mathfrak{h}^{\ast}$ and $m_i\in\mathbb{Z}$. Since the restriction of $\mu$ on the Cartan subalgebra of $\mathfrak{sl}_{2n}(\C)$ must be equal to $\omega_n$, we get $m_1=\cdots=m_n=m_{n+1}+1$ and $m_{n+1}=\cdots=m_{2n}$. By Lemma \ref{lemma: first order operator on component}, we get
           \begin{equation*}
           \big(\sum_{\gamma=1}^{2n}H_{\gamma}\big)\Delta_{1,+}=n\Delta_{1,+}.
           \end{equation*}
           This implies that $m_1=\cdots=m_n=1$ and $m_{n+1}=\cdots=m_{2n}=0$. Therefore, $\Delta_{1,+}$ generates an irreducible $K$-submodule $W_1$ with the highest weight $\mu=(1,\cdots,1, 0,\cdots, 0)$ (first $n$ coordinates are all $1$).

           Since the conjugate representation $\bar{\C^{2n}}$ of $K $ can be identified with the $(\C^{2n})^{\ast}$ by the $K$-invariant Hermitian inner product on $\C^{2n}$. Then $\Delta_{1,-}$ lives in the representation space $\bigwedge^n(\bar{\C^{2n}})^{\ast}\simeq\bigwedge^n\C^{2n}$. As we have shown in the previous paragraph that the highest weight of the $K$-module $\bigwedge^n(\C^{2n})^{\ast}$ is $(1,\ldots,1, 0,\cdots, 0)$, then $\Delta_{1,-}$ generates an irreducible $K$-submodule $W_{-1}$ of $\C^n[\Mat_{2n}^{\R}]$ with the highest weight $(0,\ldots,0, -1,\cdots, -1)$(last $n$ coordinates are all $-1$).

           Similarly, we can show that $\Delta_{2,+}$ lives in the representation space $\bigwedge^{2n}(\C^{2n})^{\ast}$ with highest weight $(1,\cdots,1)$, and $\Delta_{2,-}$ lies inside the space of $\bigwedge^{2n}(\bar{\C^{2n}})^{\ast}\cong\bigwedge^{2n}\C^{2n}$ with highest weight $(-1,\ldots,-1)$. This completes the proof of the Proposition here.
       \end{proof}

    \subsection{Construction of a Right $K\cap H$-equivariant Cohomological Vector}\label{Subsection: construction of equivariant Cohomological vector}
    We retain all notations in the previous three Subsections. Given the cohomological representation $\pi$ in \eqref{Eq: pi parabolic induction parameter} and the integer $L$ in \eqref{eq: 1001} (determined by $\pi$), we write $\pi$ in the form of \eqref{eq: 1021} as in Subsection \ref{subsection: some reductions}. Then the sequence of integers $(N_1,N_2,\cdots,N_n)$ is a sequence of positive integers in the strictly decreasing order as we explained in Subsection \ref{subsection: some reductions}. For any integer $l$ satisfying
    \begin{equation}\label{eq: condition for l}
         N_n \geq \abs{l+L},
    \end{equation}
    we aim to construct a smooth function $\varphi$ in the minimal $K$-type $\tau$ of $\pi$ satisfying the right $K\cap H$-equivariant property \eqref{eq: right equivariant property simplified}, i.e.
    \begin{equation}\label{eq: 1032}
    \varphi(g\mtrtwo{k_1}{}{}{k_2}) = \chi_{-l}(\det k_1)\cdot\chi_{l+2L}(\det k_2)\cdot \varphi(g).
    \end{equation}
    By Iwasawa decomposition, any function $\varphi\in V_\pi$ is determined by its value on $K$. We will use the compact induction model for $\pi$ for convenience.

    Recall that the restriction map $\iota: \C[\Mat_{2n}^{\R}]\rightarrow C^\infty(K)$ is a $K$-module homomorphism.
    \begin{thm}\label{thm: construction of bi-equivariant polynomial}
       Fix two integers $l,L\in\mathbb{Z}$. Let $\chi_{-l}\otimes\chi_{l+2L}$ be a character of $\mathrm{U}_n\times \mathrm{U}_n$, $\vec{N}:=(N_1+L,-N_1+L,\cdots,N_n+L,-N_n+L)$ be a sequence of integers with the property that $N_1\ge\cdots\ge N_n\ge\abs{l+L}$. We define a polynomial function $F_{\vec{N},\chi_{-l}\otimes\chi_{l+2L}}$ in $\C[\Mat_{2n}^\R]$ as follows,
       \begin{equation}\label{eq: construction of bi-KH-inv poly}
       \begin{aligned}
         &F_{\vec{N},\chi_{-l}\otimes\chi_{l+2L}}:=\\
         &\left\{
         \begin{aligned}
         &\big(\prod_{i=1}^{n-1} F_{i}^{N_i-N_{i+1}}\big)\cdot F_{n}^{N_n-l-L}\Delta_{1,-}^{2(l+L)}\Delta_{2,+}^{l+2L},\ &\text{ if }l+L\ge 0,\ l+2L\ge 0;\\
         &\big(\prod_{i=1}^{n-1} F_{i}^{N_i-N_{i+1}}\big)\cdot F_{n}^{N_n-l-L}\Delta_{1,-}^{2(l+L)}\Delta_{2,-}^{-(l+2L)},\ &\text{ if }l+L\ge 0,\ l+2L\le 0;\\
         &\big(\prod_{i=1}^{n-1} F_{i}^{N_i-N_{i+1}}\big)\cdot F_{n}^{N_n+l+L}\Delta_{1,+}^{-2(l+L)}\Delta_{2,+}^{l+2L},\ &\text{ if }l+L\le 0,\ l+2L\ge 0;\\
         &\big(\prod_{i=1}^{n-1} F_{i}^{N_i-N_{i+1}}\big)\cdot F_{n}^{N_n+l+L}\Delta_{1,+}^{-2(l+L)}\Delta_{2,-}^{-(l+2L)},\ &\text{ if }l+L\le 0,\ l+2L\le 0,
         \end{aligned}\right.
       \end{aligned}
       \end{equation}
       where all $F_{i}$ are the right $K\cap H$-invariant polynomials on $\C[\Mat_{2n}^\R]$ constructed in Theorem \ref{thm: eigen polynomial for even fundamental repns}, $\Delta_{1,\pm}$ and $\Delta_{2,\pm}$ are the determinant polynomials constructed in Proposition \ref{prop: det lives in fundamental repn}. Then $\iota(F_{\vec{N},\chi_{-l}\otimes\chi_{l+2L}})$ lives in the minimal $K$-type $\tau$ of $\pi_K$ (see \eqref{eq: pi K}), which also satisfies the right $K\cap H$-equivariant property in \eqref{eq: 1032}.
    \end{thm}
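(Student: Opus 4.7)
The strategy is to verify three properties: (I) left $T\cap K$-equivariance of $\iota(F_{\vec N,\chi_{-l}\otimes\chi_{l+2L}})$ by the character $\chi_{\vec N}$ (placing it in $\pi_K$), (II) the right $K\cap H$-equivariance \eqref{eq: right equivariant property simplified}, and (III) containment in the minimal $K$-type $\tau$ of $\pi_K$. Properties (I) and (II) reduce to computing the left $T\cap K$-weight and right $K\cap H$-character of each factor separately and summing according to the exponents in the four cases of \eqref{eq: construction of bi-KH-inv poly}. By \eqref{eq: 084}, each $F_k$ has left $T\cap K$-weight $\sum_{i=1}^k(\epsilon_{2i-1}-\epsilon_{2i})$, while a direct matrix calculation gives left weights $\sum_{i=1}^n\epsilon_{2i-1}$, $-\sum_{i=1}^n\epsilon_{2i}$, $\sum_{j=1}^{2n}\epsilon_j$, $-\sum_{j=1}^{2n}\epsilon_j$ for $\Delta_{1,+}$, $\Delta_{1,-}$, $\Delta_{2,+}$, $\Delta_{2,-}$ respectively. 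On the right, each $F_k$ is $K\cap H$-invariant (by the remark after Theorem \ref{thm: eigen polynomial for even fundamental repns}), while $\Delta_{1,+}$, $\Delta_{1,-}$, $\Delta_{2,\pm}$ transform by $\det(k_1)$, $\overline{\det(k_1)}=\det(k_1)^{-1}$, $(\det k_1\det k_2)^{\pm 1}$ respectively, using $\overline{\det k_1}=\det(k_1)^{-1}$ for $k_1\in\mathrm U_n$. Summing position by position in each case recovers the total left weight $\vec N=(N_1+L,-N_1+L,\ldots,N_n+L,-N_n+L)$ and the total right character $\chi_{-l}(\det k_1)\chi_{l+2L}(\det k_2)$; for instance, in Case 1 the right character works out to $\det(k_1)^{-2(l+L)+(l+2L)}\det(k_2)^{l+2L}=\det(k_1)^{-l}\det(k_2)^{l+2L}$.

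For property (III), the plan is to imitate the argument of Corollary \ref{cor: even fundamental polynomial live in even fundamental repn}. Since the dominant Weyl permutation of $\vec N$ is exactly $\Lambda$, Frobenius reciprocity forces every irreducible $K$-submodule of $\pi_K$ to have highest weight $\mu=\Lambda+\delta$ with $\delta$ a non-negative integer combination of positive roots of $\mathfrak{k}^\C$; by \eqref{eq: casimir eigenvalue}, the Casimir eigenvalue satisfies $C_\mu-C_\Lambda=2\langle\delta,\Lambda+\rho_K\rangle+\langle\delta,\delta\rangle\geq 0$, with equality iff $\delta=0$. It therefore suffices to show
\begin{equation*}
\Omega\,F_{\vec N,\chi_{-l}\otimes\chi_{l+2L}}\equiv C_\Lambda\cdot F_{\vec N,\chi_{-l}\otimes\chi_{l+2L}}\pmod{\mathcal I},
\end{equation*}
where $C_\Lambda=\langle\Lambda+\rho_K,\Lambda+\rho_K\rangle-\langle\rho_K,\rho_K\rangle$. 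I would establish this by expanding $\Omega$ through the Leibniz rule
\begin{equation*}
\Omega(fg)=(\Omega f)g+f(\Omega g)+2\Bigl[\sum_\gamma(H_\gamma f)(H_\gamma g)+\sum_{\alpha<\beta}\bigl((X_{\epsilon_\alpha-\epsilon_\beta}f)(X_{\epsilon_\beta-\epsilon_\alpha}g)+(X_{\epsilon_\beta-\epsilon_\alpha}f)(X_{\epsilon_\alpha-\epsilon_\beta}g)\bigr)\Bigr],
\end{equation*}
inputting the Casimir eigenvalue $4nk-2k(k-1)$ of each $F_k$ from Theorem \ref{thm: eigen polynomial for even fundamental repns} and analogous direct computations for the $\Delta_{j,\pm}$ (which are highest or lowest weight vectors in their respective irreducible $K$-modules by Proposition \ref{prop: det lives in fundamental repn}). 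The action of root vectors on individual factors is controlled by Lemmas \ref{lemma: first order operator on component} and \ref{Lemma: operators act on pairing}, and the congruence Lemma \ref{lemma: moving variables to the left} is used repeatedly to reduce cross terms modulo $\mathcal I$.

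The main obstacle will be the bookkeeping in step (III): the product has many factors, so iterating Leibniz produces a large number of cross-derivative terms whose combined contribution modulo $\mathcal I$ must be matched against the gap $C_\Lambda-\sum_j C_{\mu_j}$ between the target eigenvalue and the sum of individual Casimir eigenvalues. The natural route is induction on the number of factors, showing at each stage that the accumulated cross terms reproduce exactly the difference predicted by the Cartan-component decomposition of the tensor product of the $K$-submodules generated by the factors.
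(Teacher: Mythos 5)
Your verification of properties (I) and (II) --- the left $T\cap K$- and right $K\cap H$-equivariance of each factor, followed by case-by-case bookkeeping of the exponents --- is correct and is exactly what the paper does for part (1) of its proof (equations (3.31)--(3.39) in the source).

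For part (III), however, your plan has a genuine gap, and it comes from missing the key structural observation that makes the paper's argument short. You correctly establish the \emph{lower} bound: Frobenius reciprocity forces every $K$-type of $\pi_K$ to have highest weight $\geq\Lambda$, hence Casimir eigenvalue $\geq C_\Lambda$. You then propose to pin down the $K$-type by directly proving $\Omega\,\iota(F_{\vec N,\chi_{-l}\otimes\chi_{l+2L}})\equiv C_\Lambda\cdot\iota(F_{\vec N,\chi_{-l}\otimes\chi_{l+2L}})\pmod{\mathcal I}$ via iterated Leibniz. The problem is that this is not actually a computation one can carry out "by bookkeeping": the factors $\iota(F_j)$ are \emph{zero-weight} vectors (under the right $T\cap K$-action) in their irreducible modules $\tau_j$, not highest-weight vectors, so the root-vector cross terms $\sum_{\alpha<\beta}\bigl(X_{\epsilon_\alpha-\epsilon_\beta}\iota(F_j)\bigr)\bigl(X_{\epsilon_\beta-\epsilon_\alpha}\iota(F_k)\bigr)$ do not vanish. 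Their total contribution equals the gap $C_\Lambda-\sum C_{\mu_j}=2\sum_{j<k}\langle\mu_j,\mu_k\rangle$ \emph{if and only if} the product lies in the Cartan component --- which is precisely the assertion under proof. Asserting that the cross terms "must be matched against the gap," or that an induction "reproduces exactly the difference predicted by the Cartan-component decomposition," presupposes the conclusion; you would in fact need to redo a computation of the scale of Theorem \ref{thm: eigen polynomial for even fundamental repns} for an arbitrary product, and nothing in your sketch shows it closes.

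The missing idea is the \emph{upper} bound, which removes the need for any Casimir computation on the big product. Since $\iota(F_{\vec N,\chi_{-l}\otimes\chi_{l+2L}})$ is a pointwise product of functions each lying in a fixed irreducible $K$-submodule ($\tau_j$ with highest weight $\Lambda_j$, $W_{\pm 1}$, $W'_{\pm 1}$), it lies in the image of the $K$-equivariant multiplication map out of the tensor product $V=\tau_1^{\otimes\,\ast}\otimes\cdots\otimes W_{\pm1}^{\otimes\,\ast}\otimes (W'_{\pm 1})^{\otimes\,\ast}$, and every irreducible constituent of $V$ has highest weight $\leq\Lambda$, with the Cartan component attaining $\Lambda$. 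Combined with your lower bound from Frobenius reciprocity ($\geq\Lambda$), the only possibility is highest weight $=\Lambda$, i.e.\ the minimal $K$-type. This two-sided-bound argument is exactly what the paper uses in part (2) of its proof; once you add the upper-bound half, the Casimir eigenvalue computation you were worried about is entirely unnecessary, and the remaining pieces of your write-up (the Frobenius reciprocity step and the explicit weight/character computations for each factor) slot directly into place.
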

    \begin{proof}
      1)Let us first verify the left $(T\cap K)$- and the right $K\cap H$-equivariance of $\iota(F_{\vec{N},\chi_{-l}\otimes\chi_{l+2L}})$. We only check the last case (i.e. $l+L\le 0,l+2L\le 0$). The other three cases can be proved one by one in the same manner. Recall that the group $K$ acts on the space $\C[\Mat_{2n}^\R]$ by right translation (see \eqref{eq: action of K on P}). Then for all $(k_1,k_2)\in K\cap H=\mathrm{U}_n\times \mathrm{U}_n$, we have
       \begin{equation}\label{eq: 080}
          \Delta_{1,+}(Z\mtrtwo{k_1}{}{}{k_2}, \bar{Z}\mtrtwo{\bar{k_1}}{}{}{\bar{k_2}}) = \det{(\left(
                                                \begin{array}{c}
                                                  u_1 \\
                                                  u_3 \\
                                                  \cdots \\
                                                  u_{2n-1} \\
                                                \end{array}\right)k_1)} = \det{k_1}\cdot \Delta_{1,+}(Z,\bar{Z})
       \end{equation}
       and
       \begin{equation}\label{eq: 081}
          \begin{aligned}
          \Delta_{2,-}(Z\mtrtwo{k_1}{}{}{k_2}, \bar{Z}\mtrtwo{\bar{k_1}}{}{}{\bar{k_2}}) &=\det(\bar{Z}\mtrtwo{\bar{k_1}}{}{}{\bar{k_2}})\\ &=(\det{k_1})^{-1}(\det{k_2})^{-1}\cdot \Delta_{2,-}(Z,\bar{Z}).
          \end{aligned}
       \end{equation}
       Since all $F_{i}$ are right $K\cap H$-invariant, from \eqref{eq: 080} and \eqref{eq: 081}, it is clear that
       \begin{equation*}
           \begin{aligned}
           &F_{\vec{N},\chi_{-l}\otimes\chi_{l+2L}}(Z\mtrtwo{k_1}{}{}{k_2}, \bar{Z}\mtrtwo{\bar{k_1}}{}{}{\bar{k_2}})\\
           =&\chi_{-l}(\det{k_1})\chi_{l+2L}(\det{k_2})F_{\vec{N},\chi_{-l}\otimes\chi_{l+2L}}(Z,\bar{Z}),
           \end{aligned}
       \end{equation*}
       which confirms the right $K\cap H$-equivariant property of $\iota(F_{\vec{N},\chi_{-l}\otimes\chi_{l+2L}})$.

       To show the desired left $(T\cap K)$-equivariant property, we set
       \begin{equation*}
       F_0:=\big(\prod_{i=1}^{n-1} F_{i}^{N_i-N_{i+1}}\big)\cdot F_{n}^{N_n+l+L}.
       \end{equation*}
       Then by \eqref{eq: 084},
       \begin{equation}\label{eq: 085}
          \begin{aligned}
          &F_0(\diag(e^{\boldsymbol{i}\theta_1},\cdots,e^{\boldsymbol{i}\theta_{2n}})Z, \diag(e^{-\boldsymbol{i}\theta_1},\cdots,e^{-\boldsymbol{i}\theta_{2n}})\bar{Z})\\ =&\prod_{k=1}^{n-1}\big(\prod_{i=1}^{k} e^{\boldsymbol{i}(\theta_{2i-1}-\theta_{2i})}\big)^{N_k-N_{k+1}}\cdot\big(\prod_{i=1}^ne^{\boldsymbol{i}(\theta_{2i-1}-\theta_{2i})}\big)^{N_n+l+L}F_0(Z,\bar{Z})\\
          =&\big(\prod_{i=1}^{n} e^{\boldsymbol{i}(N_i+l+L)(\theta_{2i-1}-\theta_{2i})}\big)F_0(Z,\bar{Z}).
          \end{aligned}
       \end{equation}
       Also, we have
       \begin{equation}\label{eq: 086}
         \begin{aligned}
         &\Delta_{1,+}^{-2(l+L)}(\diag(e^{\boldsymbol{i}\theta_1},\cdots,e^{\boldsymbol{i}\theta_{2n}})Z,\diag(e^{-\boldsymbol{i}\theta_1},\cdots,e^{-\boldsymbol{i}\theta_{2n}})\bar{Z})\\ = &\big(\det\left(
                                                 \begin{array}{c}
                                                   e^{\boldsymbol{i}\theta_1}u_1 \\
                                                   e^{\boldsymbol{i}\theta_3}u_3 \\
                                                   \cdots \\
                                                   e^{\boldsymbol{i}\theta_{2n-1}}u_{2n-1} \\
                                                 \end{array}
                                               \right)\big)^{-2(l+L)}\\
         =& (\prod_{i=1}^n e^{-2\boldsymbol{i}(l+L)\theta_{2i-1}}) \Delta_{1,+}(Z,\bar{Z}),
         \end{aligned}
    \end{equation}
    and similarly
    \begin{equation}\label{eq: 087}
         \Delta_{2,-}^{-(l+2L)}(\diag(e^{\boldsymbol{i}\theta_1},\ldots,e^{\boldsymbol{i}\theta_{2n}})Z,\diag(e^{-\boldsymbol{i}\theta_1},\ldots,e^{-\boldsymbol{i}\theta_{2n}})\bar{Z}) = (\prod_{i=1}^{2n} e^{\boldsymbol{i}(l+2L)\theta_{i}}) \Delta_{2,-}(Z,\bar{Z}).
    \end{equation}
    Thus, the left $(T\cap K)$-equivariant property of $F_{\vec{N},\chi_{-l}\otimes\chi_{l+2L}}$ for $l+L\le 0,l+2L\le 0$ follows from \eqref{eq: 085}, \eqref{eq: 086} and \eqref{eq: 087}. Actually, this means that $\iota(F_{\vec{N},\chi_{-l}\otimes\chi_{l+2L}})\in\iota(\C[\Mat_{2n}^\R])\cap\pi_K$.

    2)Now we will show that the function $\iota(F_{\vec{N},\chi_{-l}\otimes\chi_{l+2L}})$ lives in the minimal $K$-type $\tau$ of $\pi_K$. The key idea is to match $\tau$ with the Cartan component of certain tensor product representations. Here we still only discuss the last case in \eqref{eq: construction of bi-KH-inv poly}. The other three cases can be proved in the same method. By Corollary \ref{cor: even fundamental polynomial live in even fundamental repn}, $\iota(F_{j})$ lies inside an irreducible $K$-submodule $(\tau_{j}, V_j)$ of $C^\infty(K)$ with highest weight $\Lambda_j = (1,\cdots,1,0,\cdots,0,-1,\cdots,-1)$. Also, Proposition \ref{prop: det lives in fundamental repn} confirms that $\Delta_{1,+}$ ($\Delta_{1,-}$, resp.) lives in an irreducible $K$-module $W_1$ ($W_{-1}$ resp.) with highest weight $(1,\cdots,1,0,\cdots,0)$ ($(0,\cdots,0,-1,\cdots,-1)$ resp.); $\Delta_{2,+}$ ($\Delta_{2,-}$ resp.) lives in an irreducible $K$-module $W'_1$ ($W'_{-1}$ resp.) with highest weight $(1,\cdots,1)$ ($(-1,\cdots,-1)$ resp.). Thus as a product of $\iota(F_{j})$, $\iota(\Delta_{i,+})$ and $\iota(\Delta_{i,-})$($i=1,2$), $\iota(F_{\vec{N},\chi_{-l}\otimes\chi_{l+2L}})$ lives in a tensor product of above $K$-submodules.

        When $l+L\le 0,l+2L\le 0$, $\iota(F_{\vec{N},\chi_{-l}\otimes\chi_{l+2L}})$ lives in the tensor product space
        \begin{equation}\label{eq: 101}
            \begin{aligned}
            V :=
            V_{1}^{\otimes N_1-N_2}\otimes V_{2}^{\otimes N_2-N_3}\otimes\cdots\otimes V_{n}^{\otimes N_{n}+l+L}\otimes W_{1}^{\otimes -2(l+L)}\otimes (W'_{-1})^{\otimes -(l+2L)}.
            \end{aligned}
        \end{equation}
       The Cartan component of the tensor product $V$ is an irreducible $K$-module with highest weight $\Lambda=(N_1+L,\cdots,N_n+L,-N_n+L,\cdots,-N_1+L)$. The orthogonal complement of the Cartan component in this tensor product consists of irreducible $K$-modules with highest weights lower than $\Lambda$. Suppose $\tau_\nu$ is a $K$-module with the highest weight $\nu$. Arguing as \eqref{eq: 090}, the Frobenius Reciprocity Law
        \begin{equation}\label{eq: 102}
               \begin{aligned}
               &\Hom_{T\cap K}(\tau_\nu|_{T\cap K},\chi_{N_1+L}\otimes\cdots\otimes\chi_{N_n+L}\otimes\chi_{-N_n+L}\otimes\cdots\otimes\chi_{-N_1+L})\\
               =&\Hom_K(\tau_\nu, \Ind_{T\cap K}^K \chi_{N_1+L}\otimes\cdots\otimes\chi_{N_n+L}\otimes\chi_{-N_n+L}\otimes\cdots\otimes\chi_{-N_1+L})\\
               \cong&\Hom_K(\tau_\nu, \pi_K)
               \end{aligned}
         \end{equation}
        implies that if $\tau_\nu$ occurs in $\pi_K$, then $\nu$ is higher than $\Lambda$. By \eqref{eq: 101}, the intersection between  $\pi_K$ and $V$ is non-empty. This means that the Cartan component of $V$ is exactly the intersection of $V$ and $\pi_K$. Note that the highest weight of the minimal $K$-type $\tau$ of $\pi_K$ is exactly $\Lambda$, this shows that $\iota(F_{\vec{N},\chi_{-l}\otimes\chi_{l+2L}})$ lives in the minimal $K$-type of $\pi_K$.
   \end{proof}
   \begin{cor}\label{cor: construction of cohomological vector}
        Let $\pi$ be the cohomological representation of $G$ given in \eqref{eq: 1021}, i.e.
        $$\begin{aligned}\pi\simeq&\Ind_{B}^{G}\abs{\quad}_{\C}^\frac{m}{2}\chi_{N_1+L}\otimes\abs{\quad}_{\C}^\frac{m}{2}\chi_{-N_1+L}\otimes\abs{\quad}_{\C}^\frac{m}{2}\chi_{N_2+L}\otimes\abs{\quad}_{\C}^\frac{m}{2}\chi_{-N_2+L}\otimes\cdots\otimes\\
                         &\qquad\qquad\abs{\quad}_{\C}^\frac{m}{2}\chi_{N_{n}+L}\otimes\abs{\quad}_{\C}^\frac{m}{2}\chi_{-N_{n}+L},\end{aligned}$$ with $L\in\mathbb{Z}$ and $(N_1, N_2,\cdots, N_n)$ being a sequence of positive integers in the strictly decreasing order. For any integer $l$ satisfying \eqref{eq: condition for l}, we construct a polynomial function $F_{\vec{N},\chi_{-l}\otimes\chi_{l+2L}}$ as in Theorem \ref{thm: construction of bi-equivariant polynomial}. Then $\varphi = \iota(F_{\vec{N},\chi_{-l}\otimes\chi_{l+2L}})$ lives in the minimal $K$-type $\tau$ of $\pi$.
   \end{cor}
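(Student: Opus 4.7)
\begin{proofof}{Corollary \ref{cor: construction of cohomological vector}}
The plan is to reduce the statement directly to Theorem \ref{thm: construction of bi-equivariant polynomial} via the Iwasawa restriction from $G$ to $K$ and the identification of minimal $K$-types coming from Vogan's theory.

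First I would recall that by the Iwasawa decomposition $G = BK$ and the left $(T\cap K)$-equivariance imposed by the inducing character, every smooth vector in $\pi$ (viewed in the induced model \eqref{eq: 1021}) is determined by its restriction to $K$. This restriction identifies the smooth $K$-module underlying $\pi$ with the compactly induced space $\pi_K$ of \eqref{eq: pi K}; in particular, the $K$-type decomposition of $\pi$ is the same as that of $\pi_K$, and the minimal $K$-type of $\pi$ coincides with the minimal $K$-type of $\pi_K$. This identification is precisely the content of \cite[Proposition 8.1]{V2}, invoked in Subsection \ref{subsection: some reductions}, which pins the common minimal $K$-type $\tau$ at highest weight $\Lambda = (N_1+L,\dots,N_n+L,-N_n+L,\dots,-N_1+L)$ given in \eqref{eq: def of weight}.

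Next I would apply Theorem \ref{thm: construction of bi-equivariant polynomial}: under the hypothesis $N_n \geq \abs{l+L}$, the restricted function $\iota(F_{\vec{N},\chi_{-l}\otimes\chi_{l+2L}})$ belongs to $\pi_K$ (by the left $(T\cap K)$-equivariance verified in the first part of its proof using \eqref{eq: 085}, \eqref{eq: 086}, \eqref{eq: 087}) and in fact lives in the minimal $K$-type of $\pi_K$ (by the Cartan-component argument in the second part of that proof). Combining this with the preceding paragraph, $\varphi = \iota(F_{\vec{N},\chi_{-l}\otimes\chi_{l+2L}})$ lies in the minimal $K$-type $\tau$ of $\pi$ itself.

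There is no real obstacle here, as all the analytic and representation-theoretic work has already been carried out in Theorem \ref{thm: construction of bi-equivariant polynomial}. The only point to be careful with is making sure the minimal $K$-type picked out inside the tensor-product space $V$ of \eqref{eq: 101} really matches the Langlands data of $\pi$ rather than just that of $\pi_K$; but this match is exactly \cite[Proposition 8.1]{V2}, so the corollary follows directly.
\end{proofof}
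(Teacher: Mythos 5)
Your proposal is correct and follows essentially the same route as the paper: identify the minimal $K$-type of $\pi$ with that of $\pi_K$ via \cite[Proposition 8.1]{V2}, then invoke Theorem \ref{thm: construction of bi-equivariant polynomial} to place $\iota(F_{\vec{N},\chi_{-l}\otimes\chi_{l+2L}})$ in the minimal $K$-type of $\pi_K$. The paper's own proof is a one-line version of exactly this argument.
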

   \begin{proof}
        This Corollary follows directly from Theorem \ref{thm: construction of bi-equivariant polynomial} and the fact that the minimal $K$-type of $\pi_K$ coincides with the minimal $K$-type $\tau$ of $\pi$, as we explained right above \eqref{eq: pi K}.
   \end{proof}

%%%%%%%%%%%%%%%%%%%%%%%%%%%%%%%%%%%%

\section{Non-vanishing of Archimedean Local Integrals}\label{sec-NV-ALI}

%%%%%%%%%%%%%%%%%%%%%%%%%%

In this Section, we will establish the non-vanishing property for the archimedean local integrals of Friedberg-Jacquet. First, we will show the non-vanishing for the new linear period $\tilde{\Lambda}_{s,\chi}$ defined in \eqref{def: modifed linear functional} when it is evaluated at the cohomological vector $v$ corresponding to $\varphi = \iota(F_{\vec{N},\chi_{-l}\otimes\chi_{l+2L}})$. Then, by the uniqueness of twisted linear period(see \cite[Theorem B]{Ch-Sun}), we get the relation between $\tilde{\Lambda}_{s,\chi}(v)$ and the local Friedberg-Jacquet integral $Z(v,s,\chi)$. Finally, we obtain the non-vanishing property of the latter one. For convenience, we begin with the $\GL_2(\C)$ case and then reduce the $\GL_{2n}(\C)$ case to $\GL_2(\C)$ blocks in the sense of linear periods.

We retain all notations in Subsection \ref{subsection: Casimir operator} and \ref{Subsection: construction of equivariant Cohomological vector}. Let $\pi$ be the irreducible essentially tempered cohomological representation of $G$ given in Corollary \ref{cor: construction of cohomological vector}, and $\chi$ be a character of $\C^\times$. There exists an integer $l$ and $u_0\in\C$ such that
 \begin{equation}\label{eq: 1030}\chi(a) = \abs{a}_\C^{u_0}\chi_l(\frac{a}{\abs{a}}).\end{equation}
The goal of this Section is to use the explicit construction of the twisted linear functional $\Lambda_{s,\chi}$ in Subsection \ref{Subsection: Another Linear Model}  and the cohomological vector in Subsection \ref{Subsection: construction of equivariant Cohomological vector}, to give a necessary and sufficient condition on $l$ under which the archimedean local Friedberg-Jacequt integral does not vanish on the minimal $K$-type $\tau$ of $\pi$. To present the theorem in a more elegant way, we first re-normalize the twisted linear function $\Lambda_{s,\chi}$ for $\pi$ defined in \eqref{eq: new def H inv linear functional}.

Given a non-negative integer $N$ and two integers $m,L$, we consider the principal series
$$\sigma = \Ind_{B_{\GL_2}}^{\GL_2(\C)} \abs{\quad}_\C^\frac{m}{2}\chi_{N+L}\otimes \abs{\quad}_\C^\frac{m}{2}\chi_{-N+L}.$$ The minimal $K$-type $\tau_\sigma$ has highest weight $(N+L,-N+L)$. More precisely, as a representation of $\SU_2$, $\tau_\sigma$ has highest weight $2N$ and the center of $\mathrm{U}_2$ acts by the character $\chi_{2L}$. Thus, $\tau_\sigma$ is a $2N+1$ dimensional vector space and it has a weight space decomposition:
$$\tau_\sigma = \bigoplus_{k=-N}^N \tau_{\sigma,k},$$
where $\tau_{\sigma,k}$ is the one dimensional weight $2k$ space:
$$\tau_{\sigma,k} = \set{v\in \tau_\sigma}{ \sigma(\mtrtwo{e^{\boldsymbol{i}\theta}}{}{}{e^{-\boldsymbol{i}\theta}})v = e^{\boldsymbol{i}\cdot 2k\theta}v}.$$
Given the character $\chi$ in \eqref{eq: 1030}, we consider the continuous linear functional $\lambda_{s,\sigma,\chi}$ defined in \eqref{eq: local int GL(2,R)} (in this case, $F=\C$ in \eqref{eq: local int GL(2,R)}).

\begin{prop}\cite[Proposition 1, Theorem 1]{P}\label{prop: popa}
   If $\abs{l+L}\leq N$, then for all $k\ne -l-L$, $\lambda_{s,\sigma,\chi}$ vanishes on the weight $2k$ space $\tau_{\sigma,k}$; for $k=-l-L$,  there exists a vector $v\in \tau_{\sigma,k}$ such that $\lambda_{s,\sigma,\chi}(v) = L(s,\sigma\otimes\chi).$
   If $\abs{l+L}>N$, then $\lambda_{s,\sigma,\chi}$ vanishes identically on the minimal $K$-type $\tau_\sigma$.
   \end{prop}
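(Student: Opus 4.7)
\begin{proof-idea}
The plan is to pin down the allowed weight via the compact torus equivariance of $\lambda_{s,\sigma,\chi}$, and then invoke Popa's explicit Whittaker computation for the non-vanishing claim.

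First, I would use that $\lambda_{s,\sigma,\chi}$ lies in
$$\Hom_{\GL_1(\C)\times \GL_1(\C)}\bigl(\sigma,\,\abs{\quad}_\C^{\frac{1}{2}-s}\chi^{-1}\otimes\abs{\quad}_\C^{s-\frac{1}{2}}\omega\chi\bigr),$$
so that for any $v\in V_\sigma$ and $\theta\in\R$,
$$\lambda_{s,\sigma,\chi}\bigl(\sigma(\diag(e^{\boldsymbol{i}\theta},e^{-\boldsymbol{i}\theta}))v\bigr) = \chi^{-1}(e^{\boldsymbol{i}\theta})\,\omega(e^{-\boldsymbol{i}\theta})\,\chi(e^{-\boldsymbol{i}\theta})\,\lambda_{s,\sigma,\chi}(v).$$
Since $\chi\lvert_{S^1} = \chi_l$ and $\omega\lvert_{S^1} = \chi_{2L}$ by \eqref{eq: central char GL2}, this scalar simplifies to $e^{-2\boldsymbol{i}(l+L)\theta}$. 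On the other hand, for $v\in\tau_{\sigma,k}$ the left-hand side is $e^{2\boldsymbol{i}k\theta}\lambda_{s,\sigma,\chi}(v)$. Matching the two expressions as functions of $\theta$ forces $k=-(l+L)$ whenever $\lambda_{s,\sigma,\chi}(v)\neq 0$.

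This observation settles both vanishing assertions. When $\abs{l+L}\leq N$ but $k\neq-(l+L)$, every $v\in\tau_{\sigma,k}$ is annihilated by $\lambda_{s,\sigma,\chi}$. When $\abs{l+L}>N$, the integer $-(l+L)$ lies outside the weight range $\{-N,-N+1,\ldots,N\}$ appearing in $\tau_\sigma$, so $\lambda_{s,\sigma,\chi}$ vanishes on every weight space, hence on the whole of $\tau_\sigma$ via its weight space decomposition.

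For the non-vanishing claim when $\abs{l+L}\leq N$, I would fix a non-zero $v\in\tau_{\sigma,-l-L}$ and realize $\sigma$ in its Kirillov model to compute the Whittaker function $W_v$ on $\diag(a,1)$ explicitly. Since $\sigma = \Ind_{B_{\GL_2}}^{\GL_2(\C)}\mu_1\otimes\mu_2$ with $\mu_1 = \abs{\quad}_\C^{m/2}\chi_{N+L}$ and $\mu_2 = \abs{\quad}_\C^{m/2}\chi_{-N+L}$, the target $L$-factor factors as $L(s,\sigma\otimes\chi) = L(s,\mu_1\chi)\,L(s,\mu_2\chi)$. The main obstacle is to show that, after a suitable normalization of $v$, the resulting integral
$$\int_{\C^\times} W_v(\diag(a,1))\,\abs{a}_\C^{s-\frac{1}{2}}\,\chi(a)\,d^\times a$$
matches this product of local $L$-factors exactly. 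This identification is not a routine calculation, as it requires Bessel-function identities on $\C^\times$ together with careful bookkeeping of the normalizations; fortunately it is precisely the content of \cite[Proposition 1, Theorem 1]{P}, which I would simply cite rather than reproduce.
\end{proof-idea}
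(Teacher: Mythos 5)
Your proposal is correct. The paper does not actually prove this proposition: it is stated purely as a citation of Popa's work \cite[Proposition 1, Theorem 1]{P}, so there is no ``paper proof'' to match. What you have done is supply an elementary, self-contained justification of the two vanishing assertions and defer only the non-vanishing computation to \cite{P}, which is a mild strengthening of the paper's treatment.

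Your compact-torus equivariance argument for the vanishing parts is sound. Restricting the $\GL_1(\C)\times\GL_1(\C)$-equivariance of $\lambda_{s,\sigma,\chi}$ to $\diag(e^{\boldsymbol{i}\theta},e^{-\boldsymbol{i}\theta})$ indeed produces the scalar $\chi^{-1}(e^{\boldsymbol{i}\theta})(\omega\chi)(e^{-\boldsymbol{i}\theta}) = e^{-2\boldsymbol{i}(l+L)\theta}$ (using $\chi\lvert_{S^1}=\chi_l$ and $\omega\lvert_{S^1}=\chi_{2L}$), and comparing with the weight-$2k$ eigenvalue $e^{2\boldsymbol{i}k\theta}$ forces $k=-(l+L)$ wherever the functional is non-zero. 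This immediately handles both the $|l+L|\leq N,\ k\neq -l-L$ vanishing and the $|l+L|>N$ case (since $-(l+L)$ then lies outside $\{-N,\dots,N\}$). One small technical point worth noting: the stated membership of $\lambda_{s,\sigma,\chi}$ in that Hom space is given in the paper only for $s$ away from the poles of $L(s,\sigma\otimes\chi)$, so strictly speaking you should remark that the vanishing then extends to all $s$ by meromorphic continuation of $s\mapsto\lambda_{s,\sigma,\chi}(v)$; this is minor but ought to be said. For the non-vanishing you correctly identify that this is an explicit Whittaker-function computation (equivalently, that a $K$-type Whittaker ``newform'' realizes the local $L$-factor), and citing \cite{P} for it, as the paper also does, is the right call.
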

In Section \ref{Section: Cohomological vector realization}, we provide an algorithm to construct a function in the minimal $K$-type by restricting a polynomial in $\C[\Mat_{2n}^\R]$ on $\mathrm{U}_{2n}$. In the special case of $n=1$, we have the following Corollary:
\begin{cor}\label{cor: gl2 cohomological vector}
   We assume that $\abs{l+L}\leq N$. Set $\vec{N} = (N+L,N-L)$. We parameterize a $2\times 2$ complex matrix as $Z=\mtrtwo{x_1}{y_1}{x_2}{y_2}$. Then the polynomial $\varphi_\sigma$ defined below
   \begin{equation}\label{eq: gl2 cohomological polynomial}
       \begin{aligned}
       &\varphi_\sigma =
         &\left\{
         \begin{aligned}
         &(x_{1}\bar{x}_{2})^{N-l-L}(\bar{x}_{2})^{2(l+L)}(x_{1}y_{2}-x_{2}y_{1})^{l+2L},&\text{ if }l+L\ge 0,\ l+2L\ge 0;\\
         &(x_{1}\bar{x}_{2})^{N-l-L}(\bar{x}_{2})^{2(l+L)}(\bar{x}_{1}\bar{y}_{2}-\bar{x}_{2}\bar{y}_{1})^{-(l+2L)},&\text{ if }l+L\ge 0,\ l+2L\le 0;\\
         &(x_{1}\bar{x}_{2})^{N+l+L}(x_{1})^{-2(l+L)}(x_{1}y_{2}-x_{2i}y_{1})^{l+2L},&\text{ if }l+L\le 0,\ l+2L\ge 0;\\
         &(x_{1}\bar{x}_{2})^{N+l+L}(x_{1})^{-2(l+L)}(\bar{x}_{1}\bar{y}_{2}-\bar{x}_{2}\bar{y}_{1})^{-(l+2L)},&\text{ if }l+L\le 0,\ l+2L\le 0
         \end{aligned}\right.
       \end{aligned}
    \end{equation}
    is a bi-$(\vec{N},\chi_{-l}\otimes\chi_{l+2L})$-equivariant polynomial function in $\C[\Mat_{2n}^\R]$. The restriction map $\iota_2: \C[\Mat_{2}^\R]\rightarrow C^\infty(\mathrm{U}_2)$ sends the above $\varphi_\sigma$ to a smooth function $\iota_2(\varphi_\sigma)$ living in the minimal $K$-type $\tau_\sigma$ of $\sigma$. More precisely, $\iota_2(\varphi_\sigma)\in \tau_{\sigma,-l-L}.$
\end{cor}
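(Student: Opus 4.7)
The plan is to recognize Corollary \ref{cor: gl2 cohomological vector} as the $n=1$ specialization of Theorem \ref{thm: construction of bi-equivariant polynomial}, supplemented by a one-line weight computation. Parameterizing $Z = \begin{pmatrix} x_1 & y_1 \\ x_2 & y_2 \end{pmatrix}$, the row vectors $u_j, v_j$ of Subsection \ref{subsection: Polynomial repns} collapse to single entries $u_j = (x_j)$ and $v_j = (y_j)$, and from the definitions of Subsection \ref{subsection: Casimir operator} one reads off directly
\begin{equation*}
F_1 = \Phi_{1,2} = x_1\bar{x}_2, \quad \Delta_{1,+} = x_1, \quad \Delta_{1,-} = \bar{x}_2, \quad \Delta_{2,+} = x_1 y_2 - x_2 y_1, \quad \Delta_{2,-} = \bar{x}_1\bar{y}_2 - \bar{x}_2\bar{y}_1.
\end{equation*}

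Substituting these into the four sign-branches of \eqref{eq: construction of bi-KH-inv poly}, and noting that the product $\prod_{i=1}^{n-1} F_i^{N_i-N_{i+1}}$ is empty for $n=1$, each branch collapses precisely to the matching expression defining $\varphi_\sigma$ in \eqref{eq: gl2 cohomological polynomial}. Hence Theorem \ref{thm: construction of bi-equivariant polynomial} immediately delivers two of the three assertions: $\iota_2(\varphi_\sigma)$ is right $K\cap H$-equivariant under $\chi_{-l}\otimes\chi_{l+2L}$, and it lives in the minimal $K$-type of $\sigma_K$. By the $n=1$ instance of Corollary \ref{cor: construction of cohomological vector}, the latter coincides with the minimal $K$-type $\tau_\sigma$ of $\sigma$.

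For the refined membership $\iota_2(\varphi_\sigma)\in \tau_{\sigma,-l-L}$, I would observe that $\mathrm{diag}(e^{\boldsymbol{i}\theta},e^{-\boldsymbol{i}\theta})$ is an element of $K\cap H = \mathrm{U}_1\times\mathrm{U}_1$. Its right action on $\iota_2(\varphi_\sigma)$ is therefore already pinned down by the $K\cap H$-equivariance just established, and evaluating $\chi_{-l}\otimes\chi_{l+2L}$ at $(e^{\boldsymbol{i}\theta},e^{-\boldsymbol{i}\theta})$ yields the scalar $e^{-\boldsymbol{i}l\theta}\cdot e^{-\boldsymbol{i}(l+2L)\theta} = e^{-2\boldsymbol{i}(l+L)\theta}$, which is precisely the weight-$2k$ character for $k = -l-L$.

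No substantive obstacle is anticipated: the proof is a direct specialization plus a character computation, and the only real bookkeeping is the four-way case analysis needed to verify that each branch of \eqref{eq: gl2 cohomological polynomial} is indeed produced by the matching branch of \eqref{eq: construction of bi-KH-inv poly} under the sign conventions on $l+L$ and $l+2L$.
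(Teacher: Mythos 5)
Your proposal is correct and takes essentially the same approach as the paper: the corollary is the $n=1$ specialization of Theorem~\ref{thm: construction of bi-equivariant polynomial} (with $F_1 = \Phi_{1,2} = x_1\bar{x}_2$, $\Delta_{1,+}=x_1$, $\Delta_{1,-}=\bar{x}_2$, $\Delta_{2,\pm}=\det Z,\ \det\bar Z$ and the empty product over $i<n$), and the refined membership $\iota_2(\varphi_\sigma)\in\tau_{\sigma,-l-L}$ follows, exactly as you argue, by evaluating the already-established right $K\cap H$-equivariance \eqref{eq: 1032} at $\mathrm{diag}(e^{\boldsymbol{i}\theta},e^{-\boldsymbol{i}\theta})\in K\cap H$.
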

    Since $\tau_{\sigma,k}$ is one dimensional for all $k$, the function $\iota_2(\varphi_\sigma)$ in Corollary \ref{cor: gl2 cohomological vector} must be a nonzero multiple of the vector $v$ in Proposition \ref{prop: popa}. Thus, we can define a continuous linear functional $\tilde{\lambda}_{s,\sigma,\chi}$ on $V_{\sigma}$, which is a nonzero scalar multiple of $\lambda_{s,\sigma,\chi}$ such that for the function $\iota_2(\varphi_\sigma)$ in Corollary \ref{cor: gl2 cohomological vector}, we have
    \begin{equation}\label{eq: 1031}
       \tilde{\lambda}_{s,\sigma,\chi}(\iota_2(\varphi_\sigma)) = \begin{cases} 0 \quad&\text{ if $\abs{l+L}>N$};\\
       L(s,\sigma\otimes\chi) \quad&\text{ if $\abs{l+L}\leq N$}.\end{cases}
    \end{equation}

    Given the irreducible cohomological representation $\pi$ as in Corollary \ref{cor: construction of cohomological vector}, we follow the method in Subsection \eqref{Subsection: Another Linear Model} and rewrite $\pi$ as
    \begin{equation}\label{Eq: double induction}
       \pi \simeq \Ind_{P}^{G}\sigma_1\otimes\sigma_2\cdots\otimes\sigma_n,
    \end{equation}
    where $P$ is the standard parabolic subgroup of $G$ with Levi decomposition $P=MU$ and each $\sigma_j$ is the principal series $$\Ind_{B_{\GL_2}}^{\GL_2(\C)}\abs{\quad}_{\C}^\frac{m}{2}\chi_{N_j+L}\otimes\abs{\quad}_{\C}^\frac{m}{2}\chi_{-N_j+L}.$$ Appying Corollary \ref{cor: gl2 cohomological vector} to the case $\sigma = \sigma_j$, for each $\sigma_j$, we can define a nonzero continuous linear functional $\tilde{\lambda}_{s,j} := \tilde{\lambda}_{s,\sigma_j,\chi}$ such that the equation \eqref{eq: 1031} holds. Then we can define a nonzero continuous linear function $\tilde{\Lambda}_{s,\chi}$ on $\pi$ using the model \eqref{Eq: double induction}: for every $\varphi\in V_\pi$,
    \begin{equation}\label{def: modifed linear functional}
   \tilde{\Lambda}_{s,\chi}(\varphi) := \int_{K\cap H} \langle \bigotimes_{j=1}^n \tilde{\lambda}_{s,j}, \varphi(w\mtrtwo{k_1}{}{}{k_2}\rangle \chi_{-l}(\det k_1)\chi_{l+2L}(\det k_2)) dk_1dk_2,
    \end{equation}which is a nonzero scalar multiple of $\Lambda_{s,\chi}$ constructed in \eqref{eq: new def H inv linear functional}. Here $w$ is the Weyl element defined in \eqref{eq: def of w}. In particular, if $\varphi$ satisfies the right $K\cap H$-equivariant property \eqref{eq: 1032}, then
    \begin{equation}\label{eq: 1033}\tilde{\Lambda}_{s,\chi}(\varphi) = \langle \bigotimes_{j=1}^n \tilde{\lambda}_{s,j}, \varphi(w)\rangle.\end{equation}

    Now we have two equivalent models for $\pi$: one is in Corollary \ref{cor: construction of cohomological vector} where we construct a function in the minimal $K$-type; the other one is \eqref{Eq: double induction} which we use to construct the twisted linear functional $\tilde{\Lambda}_{s,\chi}$. Thus to evaluate the twisted linear functional $\tilde{\Lambda}_{s,\chi}$ at the cohomological vector, we only need to track the isomorphism in the double induction formula:
    \begin{equation}\label{eq: 1034}\begin{aligned}\eta: &\Ind_{P}^{G}\sigma_1\otimes\sigma_2\cdots\otimes\sigma_n\\
    \simeq &\Ind_{B}^{G}\abs{\quad}_{\C}^\frac{m}{2}\chi_{N_1+L}\otimes\abs{\quad}_{\C}^\frac{m}{2}\chi_{-N_1+L}\otimes\abs{\quad}_{\C}^\frac{m}{2}\chi_{N_2+L}\otimes\abs{\quad}_{\C}^\frac{m}{2}\chi_{-N_2+L}\otimes\cdots\otimes\\
                         &\qquad\qquad\abs{\quad}_{\C}^\frac{m}{2}\chi_{N_{n}+L}\otimes\abs{\quad}_{\C}^\frac{m}{2}\chi_{-N_{n}+L}.\end{aligned}\end{equation}
    The above isomorphism sends a smooth function of two variables $F(g,x)\in C^\infty(G\times M)$ to $F(g,e)\in C^\infty(G).$ Thus, if $\varphi = \iota(f)$ is the cohomological vector constructed in Corollary \ref{cor: construction of cohomological vector} using the model of $\pi$ on the RHS of \eqref{eq: 1034}, then $\eta^{-1}(\varphi)(w) = \varphi(xw)$ is a smooth function (defined on $M$) in $V_{\sigma_1}\otimes\cdots\otimes V_{\sigma_n}.$

\begin{thm}\label{thm: cohomological test vector}
    Let $\pi$ be the irreducible essentially tempered cohomological representation in Corollary \ref{cor: construction of cohomological vector}. If the integer $l$ satisfies $\abs{l+L}>N_n$, then $\tilde{\Lambda}_{s,\chi}$ vanishes identically on the minimal $K$-type $\tau$ of $\pi$. If $l$ satisfies $\abs{l+L}\leq N_n$, we take $\varphi = \iota(F_{\vec{N},\chi_{-l}\otimes\chi_{l+2L}})$ to be the cohomological vector constructed in Corollary \ref{cor: construction of cohomological vector} using the model for $\pi$ on the RHS of \eqref{eq: 1034}, then
    $$\tilde{\Lambda}_{s,\chi}(\eta^{-1}(\varphi)) = L(s,\pi\otimes\chi).$$
\end{thm}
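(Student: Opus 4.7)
The plan is to evaluate $\tilde{\Lambda}_{s,\chi}(\eta^{-1}(\varphi))$ by using the formula \eqref{eq: 1033}, which reduces the task to computing the pairing $\langle \bigotimes_{j=1}^n \tilde{\lambda}_{s,j}, \eta^{-1}(\varphi)(w)\rangle$. Under the double-induction isomorphism $\eta$ of \eqref{eq: 1034}, a function $\Phi\in\Ind_P^G(\sigma_1\otimes\cdots\otimes\sigma_n)$ corresponds to the scalar function $\eta(\Phi)(g) = \Phi(g)(e)$, so $\eta^{-1}(\varphi)(w)\in V_{\sigma_1}\otimes\cdots\otimes V_{\sigma_n}$ is identified with the vector-valued function $m = \mathrm{diag}(m_1,\ldots,m_n)\mapsto \varphi(wm)$ on $M \simeq \GL_2(\C)^n$. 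The argument then splits according to whether $|l+L|\leq N_n$ or $|l+L|>N_n$.

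In the non-vanishing case $|l+L|\leq N_n$, the heart of the argument is an explicit factorization claim: as an element of $V_{\sigma_1}\otimes\cdots\otimes V_{\sigma_n}$, $\eta^{-1}(\varphi)(w)$ equals, up to a nonzero constant $c$ (that will be absorbed by normalization), the tensor product $v_1\otimes\cdots\otimes v_n$, where each $v_j = \iota_2(\varphi_{\sigma_j})$ is the $\GL_2(\C)$-cohomological vector of Corollary \ref{cor: gl2 cohomological vector} with the parameter $N$ there replaced by $N_j$. Granted this factorization, bilinearity together with the normalization \eqref{eq: 1031} yields
\begin{equation*}
\tilde{\Lambda}_{s,\chi}(\eta^{-1}(\varphi)) = c\prod_{j=1}^n \tilde{\lambda}_{s,j}(v_j) = c\prod_{j=1}^n L(s,\sigma_j\otimes\chi) = c\cdot L(s,\pi\otimes\chi),
\end{equation*}
so that after rescaling each $\tilde{\lambda}_{s,j}$ to absorb $c$ the desired identity follows. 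The factorization itself reflects the fact that the Weyl element $w$ interleaves the $2n$ rows so that rows $2j-1$ and $2j$ of $wm$ depend only on the block $m_j$, combined with the multiplicative structure of $F_{\vec N,\chi_{-l}\otimes\chi_{l+2L}}$ in \eqref{eq: construction of bi-KH-inv poly} as a product of the $F_k$'s together with the determinantal factors $\Delta_{1,\pm}, \Delta_{2,\pm}$. After this substitution, cross-block monomials in the alternating sums defining the $F_k$ cancel out and only the block-diagonal contribution survives. This is precisely where the careful tracking of the double induction flagged in the introduction must be carried out, and it is the principal technical obstacle.

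In the vanishing case $|l+L|>N_n$, I argue purely at the level of the $K$-module $\tau$, avoiding any direct computation with $\varphi$. Since $\tilde{\Lambda}_{s,\chi}$ is $(K\cap H)$-equivariant with respect to the character $\chi_{-l}(\det k_1)\chi_{l+2L}(\det k_2)$, its restriction to $\tau$ factors through the finite-dimensional space $\Hom_{K\cap H}(\tau,\chi_{-l}(\det)\otimes\chi_{l+2L}(\det))$. A Littlewood--Richardson computation for the branching $\RU_{2n}\downarrow \RU_n\times \RU_n$, applied to the irreducible module $\tau$ with highest weight $\Lambda$ after shifting $\Lambda$ by a suitable multiple of $\mathbf{1}$ to bring it into partition form, shows that this Hom space is nonzero exactly when $-N_n \leq l+L \leq N_n$; the bound $N_n$ arises because the column-length constraint in the LR rule is governed by the smallest positive coordinate of $\Lambda$. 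Hence $\Hom_{K\cap H}(\tau,\chi_{-l}(\det)\otimes\chi_{l+2L}(\det)) = 0$ when $|l+L|>N_n$, forcing $\tilde{\Lambda}_{s,\chi}|_\tau \equiv 0$ and completing the proof.
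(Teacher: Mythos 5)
Your overall plan for the non-vanishing case is the same as the paper's: reduce via \eqref{eq: 1033} to computing $\eta^{-1}(\varphi)(w)\in V_{\sigma_1}\otimes\cdots\otimes V_{\sigma_n}$, show it factors as $v_1\otimes\cdots\otimes v_n$ with $v_j=\iota_2(\varphi_{\sigma_j})$, and then apply the normalization \eqref{eq: 1031}. The gap is that you stop at ``up to a nonzero constant $c$ that will be absorbed by normalization.'' The constant cannot in fact be absorbed: the $\tilde\lambda_{s,j}$ have already been pinned down by \eqref{eq: 1031} (they are required to send $\iota_2(\varphi_{\sigma_j})$ to $L(s,\sigma_j\otimes\chi)$), so there is no further rescaling freedom, and the theorem asserts the exact equality $\tilde\Lambda_{s,\chi}(\eta^{-1}(\varphi))=L(s,\pi\otimes\chi)$, not merely a nonzero multiple. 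What is actually needed, and what the paper supplies, is the explicit evaluation of $F_{\vec N,\chi_{-l}\otimes\chi_{l+2L}}$ at $xw$ with $x\in M\cap K$ block-diagonal: writing out $xw$ row by row one sees that $u_{2j-1}(xw)=x_{2j-1}e_j$ and $u_{2j}(xw)=x_{2j}e_j$, so $\Phi_{2j-1,s(2j-1)+1}(xw)$ vanishes identically unless $s(2j-1)=2j-1$, hence in each $F_k$ only the identity permutation contributes and $F_k(xw,\bar{xw})=\prod_{j\le k}x_{2j-1}\bar x_{2j}$ with coefficient exactly $1$, and similarly $\Delta_{1,\pm},\Delta_{2,\pm}$ evaluate to products of the $\GL_2$-block factors. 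This yields \eqref{Eq: polynomial on M} with no extraneous constant, i.e. $c=1$. Your phrase ``cross-block monomials cancel out'' is the right intuition (they in fact vanish term by term rather than cancel in pairs), but without carrying out the matrix computation you cannot conclude $c=1$, nor even that $c\neq 0$, and so the asserted identity is unproved.

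For the vanishing case your route is genuinely different from the paper's. The paper uses \cite[Proposition 8.1]{V2} to place $\tau$ inside $\Ind_{M\cap K}^K(\tau_1\otimes\cdots\otimes\tau_n)$, so that $\eta^{-1}(\varphi)(w\mtrtwo{k_1}{}{}{k_2})$ takes values in $V_{\tau_1}\otimes\cdots\otimes V_{\tau_n}$, and then simply invokes Proposition \ref{prop: popa}: since $|l+L|>N_n$, the functional $\tilde\lambda_{s,n}$ already annihilates the whole minimal $K$-type $\tau_{\sigma_n}$ of $\sigma_n$, so the $n$-fold tensor pairing vanishes. This is short and self-contained. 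Your alternative is to compute $\Hom_{K\cap H}(\tau,\chi_{-l}(\det)\otimes\chi_{l+2L}(\det))$ directly via the $\RU_{2n}\downarrow\RU_n\times\RU_n$ branching/Littlewood--Richardson rule and show it vanishes for $|l+L|>N_n$. This is a plausible and more representation-theoretic route (it never touches the explicit vector-valued model), but as written it is only a claim: the LR analysis for detecting when a pair of rectangular partitions $(a^n),(b^n)$ appears in the restriction of $\tau_\Lambda$ is not actually carried out, and the statement ``the bound arises because the column-length constraint is governed by the smallest positive coordinate of $\Lambda$'' would need to be made precise and proved. If you want to pursue this line you would need to justify that branching computation; otherwise the paper's Popa-plus-Vogan argument is the more economical path.
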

\begin{proof}
    Let $\pi$ be the parabolically induced representation in \eqref{Eq: double induction}, and $\tau_j$ be the minimal $K$-type of $\sigma_j$. Then \cite[Proposition 8.1]{V2} confirms that the minimal $K$-type $\tau$ of $\pi$ is contained in the induced representation $\Ind_{M\cap K}^{K} \tau_1\otimes\cdots\otimes\tau_n$. Thus if we take any $\varphi\in V_\tau$, then $\varphi(w\mtrtwo{k_1}{}{}{k_2})\in V_{\tau_1}\otimes V_{\tau_2}\otimes\cdots\otimes V_{\tau_n}$ for all $(k_1,k_2)\in K\cap H$. When $\abs{l+L}>N_n$, Proposition \ref{prop: popa} implies that
    $$\langle \bigotimes_{j=1}^n\tilde{\lambda}_{s,j},\varphi(w\mtrtwo{k_1}{}{}{k_2})\rangle = 0.$$ This shows that $\tilde{\Lambda}_{s,\chi}$ vanishes identically on the minimal $K$-type $\tau$.

    Now we assume that $\abs{l+L}\leq N_n$. We can explicitly construct a function $\varphi = \iota(F_{\vec{N},\chi_{-l}\otimes\chi_{l+2L}})$ in the minimal $K$-type $\tau$ of $\pi$, according to Corollary \ref{cor: construction of cohomological vector}. Then $\varphi$ (and hence $\eta^{-1}(\varphi)$) satisfies the right equivairant property \eqref{eq: 1032}. In order to evaluate $\tilde{\Lambda}_{s,\chi}(\eta^{-1}(\varphi))$, we have to compute $$\eta^{-1}(\varphi)(w) = \varphi(xw) = F_{\vec{N},\chi_{-l}\otimes\chi_{l+2L}}(xw,\bar{x}\bar{w})$$ explicitly, where $w$ is the Weyl element defined in \eqref{eq: def of w}. We claim that if we write $$x=\diag(\mtrtwo{x_1}{y_1}{x_2}{y_2},\mtrtwo{x_3}{y_3}{x_4}{y_4},\cdots,\mtrtwo{x_{2n-1}}{y_{2n-1}}{x_{2n}}{y_{2n}})\in M\cap K,$$ then
    \begin{equation}\label{Eq: polynomial on M}
       \begin{aligned}
       &\eta^{-1}(\varphi)(w) = F_{\vec{N},\chi_{-l}\otimes\chi_{l+2L}}(xw,\bar{x}\bar{w}) = \\
         &\left\{
         \begin{aligned}
         &\prod_{i=1}^{n} (x_{2i-1}\bar{x}_{2i})^{N_i-l-L}(\bar{x}_{2i})^{2(l+L)}(x_{2i-1}y_{2i}-x_{2i}y_{2i-1})^{l+2L},&\text{ if }l+L\ge 0,\ l+2L\ge 0;\\
         &\prod_{i=1}^{n} (x_{2i-1}\bar{x}_{2i})^{N_i-l-L}(\bar{x}_{2i})^{2(l+L)}(\bar{x}_{2i-1}\bar{y}_{2i}-\bar{x}_{2i}\bar{y}_{2i-1})^{-(l+2L)},&\text{ if }l+L\ge 0,\ l+2L\le 0;\\
         &\prod_{i=1}^{n} (x_{2i-1}\bar{x}_{2i})^{N_i+l+L}(x_{2i-1})^{-2(l+L)}(x_{2i-1}y_{2i}-x_{2i}y_{2i-1})^{l+2L},&\text{ if }l+L\le 0,\ l+2L\ge 0;\\
         &\prod_{i=1}^{n} (x_{2i-1}\bar{x}_{2i})^{N_i+l+L}(x_{2i-1})^{-2(l+L)}(\bar{x}_{2i-1}\bar{y}_{2i}-\bar{x}_{2i}\bar{y}_{2i-1})^{-(l+2L)},&\text{ if }l+L\le 0,\ l+2L\le 0.
         \end{aligned}\right.
       \end{aligned}
    \end{equation}
    This can be verified case by case. Here we only show the last one (i.e. $l+L\le 0,\ l+2L\le 0$). The other three cases can be proved in the same manner. By a matrix computation, it is easy to see that
     \begin{equation*}
       xw = \left(
\begin{array}{cccccccc}
 x_1 & 0 & \cdots & 0 & y_1 & 0 & \cdots & 0 \\
 x_2 & 0 & \cdots & 0 & y_2 & 0 & \cdots & 0 \\
 0 & x_3 & \cdots & 0 & 0 & y_3 & \cdots & 0 \\
 0 & x_4 & \cdots & 0 & 0 & y_4 & \cdots & 0 \\
 \vdots & \vdots & \ddots & \vdots & \vdots & \vdots & \ddots & \vdots \\
 0 & 0 & \cdots & x_{2n-1} & 0 & 0 & \cdots & y_{2n-1} \\
 0 & 0 & \cdots & x_{2n} & 0 & 0 & \cdots & y_{2n}
\end{array}
\right).
    \end{equation*}
     Then according to the definition of $F_{i}$ in Theorem \ref{thm: eigen polynomial for even fundamental repns}, we have
      \begin{equation}
        F_i(xw,\bar{xw})=\left\{\begin{aligned}&(x_1\bar{x}_2\cdots x_{2i-1}\bar{x}_{2i})^{N_i-N_{i+1}}, \ &1\le i\le n-1;\\
         &(x_1\bar{x}_2\cdots x_{2n-1}\bar{x}_{2n})^{N_n+l+L},\ &i=n.\end{aligned}\right.
       \end{equation}
       Thus,
       \begin{equation}\label{eq: 103}
       \Big(\big(\prod_{i=1}^{n-1} F_{i}^{N_i-N_{i+1}}\big)\cdot F_{n}^{N_n+l+L}\Big)(mw,\bar{mw})=\prod_{i=1}^{n} (x_{2i-1}\bar{x}_{2i})^{N_i+l+L}.
       \end{equation}
       By the construction of $\Delta_{1,+}$ in Proposition \ref{prop: det lives in fundamental repn}, we get
      \begin{equation}\label{eq: 104}
      \Delta_{1,+}^{-2(l+L)}(xw,\bar{xw})=(x_1x_3\cdots x_{2i-1})^{-2(l+L)}.
      \end{equation}
      Since the determinant of the Weyl element $w$ is equal to one, then
      \begin{equation}\label{eq: 105}
      \Delta_{2,-}^{-(l+2L)}(xw,\bar{xw})=\big(\prod_{i=1}^n(\bar{x}_{2i-1}\bar{y}_{2i}-\bar{x}_{2i}\bar{y}_{2i-1})\big)^{-(l+2L)}.
      \end{equation}
      Now combining \eqref{eq: 103}, \eqref{eq: 104} and \eqref{eq: 105}, we show that for $l+L\le 0,\ l+2L\le 0$
       \begin{equation}
       F_{\vec{N},\chi_{-l}\otimes\chi_{l+2L}}(xw,\bar{x}\bar{w}) = \prod_{i=1}^{n} (x_{2i-1}\bar{x}_{2i})^{N_i+l+L}(x_{2i-1})^{-2(l+L)}(\bar{x}_{2i-1}\bar{y}_{2i}-\bar{x}_{2i}\bar{y}_{2i-1})^{-(l+2L)}.
       \end{equation}

      Let $\varphi_j := \varphi_{\sigma_j}$ be the polynomial function constructed in Corollary \ref{cor: gl2 cohomological vector}, then $\iota_2(\varphi_j)\in V_{\tau_j}$ and $\tilde{\lambda}_{s,j}(\iota_2(\varphi_j)) = L(s,\sigma_j\otimes\chi).$ Comparing the formula \eqref{Eq: polynomial on M} and \eqref{eq: gl2 cohomological polynomial}, we find that
      $$\eta^{-1}(\varphi)(w) = \prod_{j=1}^n \iota_2(\varphi_j).$$
      Therefore,
      $$\tilde{\Lambda}_{s,\chi}(\eta^{-1}(\varphi)) = \langle \bigotimes_{j=1}^n \tilde{\lambda}_{s,j}, \eta^{-1}(\varphi)(w) \rangle = \langle \bigotimes_{j=1}^n \tilde{\lambda}_{s,j}, \prod_{j=1}^n \iota_2(\varphi_j) \rangle\\
      = \prod_{j=1}^n L(s,\sigma_j\otimes\chi) = L(s,\pi\otimes\chi).$$
    \end{proof}
Now we relate $\tilde{\Lambda}_{s,\chi}$ with the local integral $Z(v,s,\chi)$.
\begin{cor}\label{cor: relation between two linear models}
     Suppose that $\pi$ is an irreducible representation of $\GL_{2n}(F)$ given in \eqref{Eq: double induction}. There exists a holomorphic function $G(s,\chi)$ such that
     \begin{equation*}
         Z(v,s,\chi) = e^{G(s,\chi)}\tilde{\Lambda}_{s,\chi}(v).
     \end{equation*}
     As a consequence, if we further assume that $\pi$ is the irreducible cohomological representation in Theorem \ref{thm: cohomological test vector} and $v = \eta^{-1}(\iota(f))$, where $\eta$ is the isomorphism \eqref{eq: 1034} between two equivalent models of $\pi$ and $\iota(f)$ is the cohomological vector constructed in Corollary \ref{cor: construction of cohomological vector}, then $Z(v,s,\chi)$ does not vanish for all $s$ and $\chi$.
\end{cor}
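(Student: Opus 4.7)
The plan is to exploit the almost-everywhere uniqueness of the twisted linear model recorded in \eqref{Eq: uniqueness of twisted linear model} to force $Z(\cdot,s,\chi)$ and $\tilde{\Lambda}_{s,\chi}$ to be proportional. For all but countably many $s\in\BC$, the Hom space in \eqref{Eq: uniqueness of twisted linear model} is one-dimensional, so there is a scalar $c(s,\chi)$, a priori meromorphic in $s$, with
$$Z(v,s,\chi) \;=\; c(s,\chi)\,\tilde{\Lambda}_{s,\chi}(v), \qquad \text{for all } v\in V_\pi.$$
The heart of the matter is to show $c(s,\chi)$ is in fact an entire, nowhere-vanishing function, so that $c(s,\chi)=e^{G(s,\chi)}$ for some entire $G(s,\chi)$ by the standard fact that the complex plane is simply connected.

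To establish this I will test the relation against two extremal vectors. By item (3) of \cite[Theorem 3.1]{A-G-J}, there exists a smooth $v_1\in V_\pi$ with $Z(v_1,s,\chi)=L(s,\pi\otimes\chi)$, while Proposition \ref{Cor: analytic property of Lambda} produces $v_0\in V_\pi$ with $\Lambda_{s,\chi}(v_0)=L(s,\pi\otimes\chi)$; since $\tilde{\Lambda}_{s,\chi}$ is a nonzero scalar multiple of $\Lambda_{s,\chi}$ coming from the rescaling of each $\lambda_{s,j}$ to $\tilde{\lambda}_{s,j}$, the same vector satisfies $\tilde{\Lambda}_{s,\chi}(v_0)=c_0\,L(s,\pi\otimes\chi)$ for some $c_0\in\C^{\times}$. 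Now $Z(v_0,s,\chi)$ is a holomorphic multiple of $L(s,\pi\otimes\chi)$ by \cite[Theorem 3.1]{A-G-J}, so evaluating the proportionality relation on $v_0$ yields $c(s,\chi)=c_0^{-1}\cdot Z(v_0,s,\chi)/L(s,\pi\otimes\chi)$, which is entire; evaluating on $v_1$ yields $1/c(s,\chi)=\tilde{\Lambda}_{s,\chi}(v_1)/L(s,\pi\otimes\chi)$, which is entire by Proposition \ref{Cor: analytic property of Lambda}. Hence $c(s,\chi)$ is an entire, nowhere-vanishing function of $s$, and it admits an entire logarithm $G(s,\chi)$.

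For the non-vanishing consequence, I will specialize to $v=\eta^{-1}(\iota(F_{\vec{N},\chi_{-l}\otimes\chi_{l+2L}}))$ in the regime $\abs{l+L}\le N_n$; Theorem \ref{thm: cohomological test vector} then gives $\tilde{\Lambda}_{s,\chi}(v)=L(s,\pi\otimes\chi)$, whence
$$Z(v,s,\chi)\;=\;e^{G(s,\chi)}\,L(s,\pi\otimes\chi).$$
Because the archimedean $L$-factor for $\GL_{2n}(\BC)\otimes\chi$ is a finite product of complex $\Gamma$-factors and therefore has no zeros in $\BC$, and $e^{G(s,\chi)}$ never vanishes, $Z(v,s,\chi)$ is nowhere zero in $s$. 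The only potentially delicate point is verifying that $\tilde{\Lambda}_{s,\chi}(v)$ really is a holomorphic multiple of $L(s,\pi\otimes\chi)$, but this is already packaged in Proposition \ref{Cor: analytic property of Lambda}, so once that is in hand the argument is purely formal.
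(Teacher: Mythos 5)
Your argument is essentially the paper's own: you invoke the almost-everywhere one-dimensionality of the twisted linear Hom space (from \cite[Theorem B]{Ch-Sun}, packaged in \eqref{Eq: uniqueness of twisted linear model}) to get a meromorphic proportionality factor $c(s,\chi)$, then show $c$ and $1/c$ are both entire by testing against the two ``good'' test vectors coming respectively from Proposition \ref{Cor: analytic property of Lambda} and \cite[Theorem 3.1]{A-G-J}, concluding $c=e^{G}$. The only cosmetic addition over the paper's write-up is that you explicitly record the elementary facts that $\tilde{\Lambda}_{s,\chi}$ differs from $\Lambda_{s,\chi}$ by a nonzero constant and that the archimedean $L$-factor, being a product of $\Gamma$-factors, never vanishes — both are correct and make the final non-vanishing claim entirely self-contained.
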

\begin{proof}
     The proof is similar to that of \cite[Corollary 5.2]{ChenJiangLinTianExplicitCohomologicalVectorReal}, we outline the proof here for the convenience of the reader. By \cite[Theorem B]{Ch-Sun}, for almost all $s$ and $\chi$, $L(s,\pi\otimes\chi) \ne \infty$ and
     \begin{equation*}
        \text{dim Hom}_{H}(\pi, \abs{\det}_\C^{-s+\frac{1}{2}}\chi^{-1}(\det)\otimes\abs{\det}_\C^{s-\frac{1}{2}}(\chi\omega)(\det))\leq 1.
    \end{equation*}
    Since for such a pair $(s,\chi)$, both $Z(v,s,\chi)$ and $\tilde{\Lambda}_{s,\chi}$ define a non-zero element in
    \begin{equation*}
       \text{Hom}_{H}(\pi, \abs{\det}_\C^{-s+\frac{1}{2}}\chi^{-1}(\det)\otimes\abs{\det}_\C^{s-\frac{1}{2}}(\chi\omega)(\det)),
    \end{equation*}
    there exists a meromorphic function $C(s,\chi)$ in $s$ and $\chi$ such that
    \begin{equation}\label{eq: 021}
       Z(v,s,\chi) = C(s,\chi)\tilde{\Lambda}_{s,\chi}(v).
    \end{equation}
    Unlike the real case where we choose a cohomological vector, we apply Proposition \ref{Cor: analytic property of Lambda} and pick a smooth vector $v$ such that $\tilde{\Lambda}_{s,\chi}(v) = L(s,\pi\otimes\chi)$. Thus, we obtain that
    \begin{equation*}
       C(s,\chi) = \frac{Z(f,s,\chi)}{L(s,\pi\otimes\chi)}
    \end{equation*}
    must be holomorphic, according to \cite[Theorem 3.1]{A-G-J}. Similarly, also by \cite[Theorem 3.1]{A-G-J}, we can choose a smooth vector $v_0$ such that $Z(v_0,s,\chi)=L(s,\pi\otimes\chi)$. Thus, by the same argument as the above,
    \begin{equation*}
       \frac{1}{C(s,\chi)} = \frac{\tilde{\Lambda}_{s,\chi}(v_0)}{L(s,\pi\otimes\chi)}
    \end{equation*}
    must also be holomorphic, according to Proposition \ref{Cor: analytic property of Lambda}. Hence $C(s,\chi)$ have no zeroes. This implies that there exists a holomorphic function $G(s,\chi)$ such that
    \begin{equation*}
         C(s,\chi) = e^{G(s,\chi)}.
    \end{equation*}
\end{proof}

It is finally clear that Theorem \ref{thm-main} holds.
\bibliographystyle{elsarticle-num}

\end{document}